\def\dOi{11(1:17)2015}
\def\2{\mathbbm{2}}   
\def\bd{\mathop{\mathrm{bd}}\nolimits}
\def\cl{\mathop{\mathrm{cl}}\nolimits}
\def\ext{\mathop{\mathrm{ext}}\nolimits}
\def\height{\mathop{\mathrm{height}}\nolimits}
\def\ind{\mathop{\mathrm{ind}}\nolimits}
\newcommand{\sse}{\sqsubseteq}
\newcommand{\spe}{\sqsupseteq}
\def\Idl{\mathop{\mathrm{Idl}}\nolimits}
\def\succ{\mathop{\mathrm{succ}}\nolimits}
\newcommand{\lub}{\sqcup}
\newcommand{\upa}[1]{{\uparrow\hspace{-3pt} #1}}
\newcommand{\down}[1]{{\downarrow\hspace{-3pt} #1}}
\newcommand{\comp}{\uparrow}
\newcommand{\ncomp}{{\ooalign{{\hss{\scalebox{1}[0.6]{/}}\hss}\crcr{$\ \uparrow\ $}}}}
\def\BB{\mathfrak{B}}
\def\N{\mathbb{N}}
\def\T{\mathbb{T}}
\def\I{\mathbb{I}}
\newcommand{\wh}[1]{\widehat{#1}}
\def\dom{\mathop{\mathrm{dom}}\nolimits}
\newcommand{\sigbot} {\mathbb{T}^{\omega}}
\def\exS{S_{\mathrm{ex}}}
\def\exbarS{\bar{S}_{\mathrm{ex}}}
\def\deg{\mathop{\mathrm{deg}}\nolimits}
\def\level{\mathop{\mathrm{level}}\nolimits}
\newcommand{\fk}[1]{\mathfrak{#1}}
\def\nn{n \in \N}
\def\a2{a \in \2}
\begin{document}
\title{Domain Representations Induced by Dyadic Subbases}

\author{Hideki Tsuiki}
\address{Graduate School of Human and Environmental Studies, Kyoto University, Sakyo-ku, Kyoto, Japan}
\email{\{tsuiki,tsukamoto\},@i.h.kyoto-u.ac.jp}
\thanks{This work was partially supported by JSPS KAKENHI Grant Number 22500014.} 

\author{Yasuyuki Tsukamoto}
\address{\vspace{-18 pt}}

\makeatletter 
\@namedef{subjclassname@1998}{2012 ACM CCS}
\makeatother

\keywords{domain theory, subbase, compact Hausdorff space}


\begin{abstract}
  We study domain representations induced by dyadic subbases and
  show that a proper dyadic subbase of a second-countable
  regular space $X$ induces an embedding of $X$ in the
  set of minimal limit elements of 
  a subdomain $D$ of $\{0,1,\bot\}^\omega$.  In particular,
  if $X$ is compact, then 
  $X$ is a retract of the set of limit elements of $D$.


\end{abstract}

\maketitle

\section{Introduction}

From a computational point of view, it is natural to consider a subbase
of a second-countable $T_{0}$ space $X$
as a collection of primitive properties of $X$
through which one can identify each point of $X$.
In this way, by fixing a numbering of the subbase,
one can represent each point of $X$ as a subset of $\N$ and
construct a domain representation of $X$
in the domain $P_{\omega}$ of subsets of $\N$
\cite{Blanck:2000,StoTucker08}. 
Note that
$P_{\omega}$ is isomorphic to the domain of infinite sequences
of the Sierpinski space $\{1, \bot\}$.

On the other hand, 
each regular open set $A$
(i.e., an open set which is equal to the interior of its closure) 
of a topological space $X$
divides $X$ into three parts:
$A$, the exterior of $A$, and their common boundary.
Therefore, one can consider
a pair of regular open subsets which are exteriors of each other 
as a pair of primitive properties
and use a subbase which is composed of such pairs of open sets
in representing the space.
Such a subbase is called a dyadic subbase
and a dyadic subbase of a space $X$ induces a domain representation of $X$
in the domain $\T^\omega$ of infinite sequences of $\T = \{0,1,\bot\}$. 
In \cite{Tsuiki:2004b} and \cite{OTY},
the authors introduced to a dyadic subbase the properness property 
which expresses a kind of orthogonality between the components
and 
studied domain representations of Hausdorff spaces 
induced by proper dyadic subbases.
In this representation,
the domain is fixed to $\T^{\omega}$
and an embedding $\varphi_{S}$ of a Hausdorff space $X$ in $\T^{\omega}$ is
derived from a proper dyadic subbase $S$ of $X$.

In this paper, we derive from a dyadic subbase $S$
a domain (i.e., an $\omega$-algebraic pointed dcpo) ${D}_S$ and  a bounded complete domain
$\wh{D}_S$ which are subdomains of $\T^{\omega}$ containing 
$\varphi_{S}(X)$ as subspaces. 
The domain ${D}_S$ has the following properties.
(1) If $X$ is a strongly nonadhesive Hausdorff space 
(Definition \ref{def-ad}), then
the set $L({D}_S)$ of limit (i.e., non-compact) elements of ${D}_S$
has minimal elements.
(2) If $X$ is regular, then $\varphi_{S}$ is an embedding of $X$ 
in the set of minimal elements of $L({D}_S)$.
(3) If $X$ is compact, then there is a retraction $\rho_{S}$ from $L({D}_{S})$ to $X$.
That is, every infinite strictly increasing sequence in $K({D}_{S})$ represents a point of $X$ through $\rho_{S}$ and
$ ({D}_{S}, L({D}_{S}), \rho_{S})$ is the kind of domain representations
studied in \cite{Tsuiki:2004}.
The domain $\wh{D}_{S}$ also has the properties (1) to (3) and, in addition, it is bounded complete.

We study properties of representations for second-countable Hausdorff spaces
and investigate which property holds under each of the above-mentioned conditions.
Therefore, a space in this paper means a second-countable Hausdorff space unless otherwise noted.
We are mainly interested in the case where $X$ is a regular space because
the corresponding domain representations have good properties as we mentioned above.
In addition, it is proved in \cite{OTY2}  that every second-countable regular 
space has a proper dyadic subbase and in 
\cite{OTY}  that every dense-in-itself second-countable regular space has an independent
subbase,  which is a proper dyadic subbase with an additional property.

We review proper dyadic subbases and their properties in the next section,
and we study TTE-representations and domain representations in $\T^{\omega}$
derived from (proper) dyadic subbases in Section 3.
We introduce the domains ${D}_{S}$ and $\wh{D}_{S}$ in Section 4,
and present the strongly nonadhesiveness condition in Section 5.
Then we study domain representations in these domains for the case $X$ is regular in Section 6.
Finally, in Section 7, we study the small inductive dimension of the $T_{0}$-spaces $L(D_{S})$ and $L(\wh{D}_{S})$ based on a result in \cite{Tsuiki:2004}.

\medbreak
\noindent
{\bf Preliminaries and Notations:}

{\bf Bottomed Sequences:}
Let $\N$ be the set of non-negative integers and $\2$ be the set $\{0,1\}$.
Let $\T$ be the set $\{0,1,\bot\}$
where $\bot$ is called the bottom character which means undefinedness.  
The set of infinite sequences of a set $\Sigma$ is denoted by $\Sigma^\omega$. 
Each element of $\T^\omega$ is called a {\em bottomed sequence} and each copy 
of 0 and
1 which appears in a bottomed sequence $p$ is called a {\em digit} of
$p$.  A {\em finite bottomed sequence} is a bottomed sequence
with a finite number of digits, and the set of all finite bottomed sequences
is denoted by $\T^*$.
We sometimes omit $\bot^{\omega}$ at the end of a finite bottomed sequence
and identify a finite bottomed sequence with a finite sequence of $\T$.
The set of finite sequences of $\2$ is denoted by $\2^{*}$.

We define the partial order relation $\sse$ on $\T$ by
$\bot \sse 0$ and $\bot \sse 1$,
and its product order on $\sigbot$ is denoted by the same symbol $\sse$, i.e., 
for every $p , q\in\sigbot$, $p \sse q$
if $p(n)\sse q(n)$ for each $\nn$.
Then $\2^\omega$ is the set of maximal elements of $\sigbot$.  
We consider the $T_0$-topology
$\{\emptyset, \{0\}, \{1\}, \{0,1\}, \T\}$ on $\T$,
and its product topology on $\sigbot$.
We write $\dom(p)=\{k: p(k) \neq \bot\}$ for $p \in \T^{\omega}$.
For a finite bottomed sequence $e \in \T^*$,
the length $|e|$ of $e$ is the maximal number $n$
such that  $e(n-1) \neq \bot$.
We denote by $p|_{n}$ the finite bottomed sequence
with $\dom(p|_{n}) = \dom(p) \cap \{0,1,\ldots,n-1\}$ 
such that $p|_{n} \sse p$.  That is, 
$p|_n(k) = p(k)$ if $k<n$ and $p|_n(k) = \bot$ if $k \geq n$.  
Note that the notation $p|_n$ is used with a different meaning
in \cite{Tsuiki:2004b}.

The letters $a$ and $b$ will be used for elements of $\2$, 
$c$ for  elements of $\T$, 
$i,j,k,l,m,n$ for elements of $\N$, 
$p$ and $q$ for
bottomed sequences, and $d$ and $e$ for finite bottomed sequences.
We write  $c^\omega=(c,c,\cdots)\in\sigbot$ for $c \in\T$.
We denote by $p[n:=a]$ the bottomed sequence $q$ such that
$q(n) = a$ and $q(i) = p(i)$ for $i \ne n$. 

\medbreak
{\bf Topology:}
Throughout this paper, $X$ denotes a second-countable Hausdorff space
unless otherwise noted.
Therefore, if $X$ is regular,
then $X$ is separable metrizable by Urysohn's metrization theorem.
Recall that a subset $U$ of $X$ is {\em regular open} if $U$
is the interior of its closure. 

A {\em filter} $\fk{F}$ on the space $X$ is a family of subsets of $X$
with the following properties.
\begin{enumerate}
\item $\emptyset \not\in \fk{F}$.
\item If $A\in \fk{F}$ and $A\subseteq A' \subseteq X$, then $A'\in \fk{F}$.
\item If $A,B\in \fk{F}$, then $A\cap B\in \fk{F}$.
\end{enumerate}
Let $\fk{V}(x)$ denote the family of neighbourhoods of $x\in X$.
For a filter $\fk{F}$ on $X$ and a point $x\in X$,
if we have $\fk{V}(x)\subseteq \fk{F}$
then we say that $\fk{F}$ {\em converges to} $x$.

A family $\BB$ of subsets of $X$ is called a {\em filter base}
if it satisfies $\emptyset\not\in \BB$, $\BB\neq \emptyset$, and that
for all $A,B\in \BB$ there exists $C\in \BB$ such that $C\subseteq A\cap B$.
A filter generated by a filter base $\BB$ is defined
as the minimum filter containing $\BB$.
We say that a filter base {\em converges to} $x\in X$
if it generates a filter which converges to $x$.

We denote by $\cl_{Y} A$, $\bd_{Y} A$, and $\ext_{Y} A$  the
closure, boundary, and exterior of a set $A$ in a space $Y$, respectively,
and we omit the subscript if the space is obvious.

\medbreak

{\bf Domain Theory:}
Let $(P,\sse)$ be a partially ordered set (poset).
We say that two elements $p$ and $p'$ of a poset $P$ are {\em compatible}
if $p  \sse q$ and $p' \sse q$ for some $q \in P$,
and write $p \comp p'$ if $p$ and $p'$ are compatible. 
For $p  \in P$ and $A \subseteq P$,
we define $\upa{p}=\{q : q \sqsupseteq p \}$, 
$\down{p}= \{q : q \sse p \}$, 
$\upa{A}= \cup \{\upa q : q \in A \}$, and
$\down{A}= \cup \{\down q : q \in A \}$.
Therefore,  we have $\down{\upa{p}}= \{q: q \comp p \}$.
We say that $A$ is {\em downwards-closed} if $A = \down{A}$,
and {\em upwards-closed} if $A=\upa{A}$.

A subset $A$ of a poset $P$ is called {\em directed}
if it is nonempty and each pair of elements
of $A$ has an upper bound in $A$.  A {\em directed complete partial
order (dcpo)} is a partially ordered set in which  every directed
subset $A$ has a {\em least upper bound} ({\em lub}) $\lub A$.
A dcpo is {\em pointed} if it has a least element.

Let $(D,\sse)$ be a dcpo.
A {\em compact element} of $D$ is an element $d \in D$ 
such that for every directed subset $A$,
if $d \sse \lub A$ then $d \in \down{A}$.
An element of $D$ is called a {\em limit element} if it is not compact.
We write $K(D)$ for the set of compact elements of $D$,
and $L(D)$ for the set of limit elements of $D$.

For $x \in D$, we define $K_x = K(D) \cap \down{x}$.  A dcpo
$D$ is {\em algebraic} if $K_x$ is directed and $\lub K_x= x$ for each
$x \in D$,
and it is {\em $\omega$-algebraic}
if $D$ is algebraic and $K(D)$ is countable.  
In this paper, a {\em domain} means an $\omega$-algebraic pointed dcpo.
The {\em Scott topology} of a domain $D$ is the topology generated by
$\{\upa{d} :d \in K(D)\}$. 
In this paper, we consider a domain $D$ as a topological space with the Scott topology.
A poset is {\em bounded complete} if every subset
which has an upper bound also has a least upper bound.
$\T^{\omega}$ is a bounded complete domain such that $K(\T^\omega)= \T^*$.

An {\em ideal} of a poset $P$ is a directed downwards-closed subset.  
The set of ideals of $P$ ordered by set inclusion is denoted by $\Idl(P)$.
The poset $\Idl(P)$ becomes a domain
called the {\em ideal completion} of $P$ if $P$ is countable.
We have an order isomorphism $K(\Idl(P)) \cong P$ for each countable poset $P$ 
with a least element. 
On the other hand, for a domain $D$, 
we have $\Idl(K(D)) \cong D$.
Therefore, $K(D)$, the set of compact elements of $D$,
determines the structure of $D$.  We say that an ideal of $K(D)$ is 
{\em principal}  if its least upper bound is in $K(D)$.
An infinite strictly increasing sequence
{$d_0 \sqsubsetneq d_1 \sqsubsetneq  d_2  \sqsubsetneq  \dotsb$} 
in {$K(D)$} determines a non-principal ideal
{$\{e \in K(D):e \sse d_i \text{ for some } i\}$}
of {$K(D)$} and thus determines a point of {$L(D)$}.

A poset $P$ is a {\em conditional upper semilattice with least element  (cusl)}
if it has a least element
and every pair of compatible elements has a least upper bound.
 If $P$ is a cusl, then $\Idl(P)$ is a bounded complete domain.
For background material on domains, see \cite{GHKLMS,AJ1994,Sto94}.

\medbreak
{\bf Representation:}
We write $f :\subseteq A \to B$ if $f$ is a partial function from $A$ to $B$.
For a finite or a countably infinite alphabet $\Sigma$,
a surjective partial function from $\Sigma^\omega$ to $X$ 
is called a {\em (TTE-)representation} of  $X$. 
We say that
a continuous function $\gamma:\subseteq \Sigma^{\omega} \to X$ is
{\em reducible} (resp.~ {\em continuously reducible})  to $\delta:\subseteq \Sigma^{\omega} \to X$
if there exists a computable function (resp.~continuous function)
$\phi:\subseteq \Sigma^{\omega} \to \Sigma^{\omega}$
such that $\gamma(p) = \delta( \phi(p))$ for every $p \in \dom(\gamma)$.
Two representations $\delta, \delta':\subseteq \Sigma^{\omega} \to X$ are
{\em equivalent} (resp. {\em continuously equivalent})  if they are reducible (reps.~continuously reducible) to each other.
A representation $\delta:\subseteq \Sigma^{\omega} \to X$
is called {\em admissible}
if $\delta$ is continuous and every continuous function
$\gamma:\subseteq \Sigma^{\omega} \to X$
is continuously reducible to $\delta$.

Let $X$ be a $T_{0}$-space and $B = \{B_{n}: \nn\}$ be a subbase of $X$
indexed by $\N$.  Consider the representation
$\delta_B: \subseteq \N^{\omega} \to X$ such that
$\delta_B(p) = x$ if and only if
$\{p(k) : k \in \N\}  = \{n \in \N: x \in B_{n} \}$.
$\delta_B$ is called
a {\em standard representation} of $X$ with respect to $B$.
Any representation
which is continuously equivalent to a standard representation is
admissible \cite{WG,Schroder2002,Weihrauch}.  

\medbreak
{\bf Domain representation:}
Let $D$ be a domain,  $D^{R}$ a subspace of $D$, 
and $\mu$ a quotient map from $D^{R}$ onto $X$. 
The triple $(D, D^{R}, \mu)$ is called
a {\em domain representation} of $X$.
Note that we do not require $D$ to be bounded-complete 
or each element of $D^{R}$ to be total (i.e., condense)  in this paper.
See \cite{Berger:1993, Blanck:2000} for the notion of totality.
A domain representation
is called a {\em retract domain representation} if $\mu$ is a retraction,
and a {\em homeomorphic domain representation} if $\mu$ is a homeomorphism.

A domain representation  $(D, D^{R}, \mu)$ of $X$ is {\em upwards-closed}
if $D^R$ is upwards-closed and 
$\mu(d)=\mu(e)$ for every $d \spe e \in D^{R}$.
A domain representation $(D, D^{R}, \mu)$ is called {\em admissible}
if for every pair $(E, E^{R})$ of a domain $E$
and a dense subset $E^{R} \subseteq E$
and for every continuous function $\nu:E^{R} \to X$,
there is a continuous function $\phi: E \to D$ such that
$\nu(x) = \mu (\phi (x)) $ holds for all $x \in E^{R}$.  
A domain representation $E = (E, E^{R}, \nu)$ {\em reduces continuously} to
a domain representation $D = (D, D^{R}, \mu)$
if there is a continuous function $\phi:E \to D$ such that 
$\phi(E^{R}) \subseteq D^{R}$ and
$\nu(x) = \mu ( \phi (x))$  for all $ x \in E^{R}$. 
For more about (admissible) domain representations, see
\cite{Blanck:2000,Hamrin,StoTucker95,StoTucker08}.

\section{Proper dyadic subbases} \label{s-2}

Recall that a space means a Hausdorff space unless otherwise noted.

\begin{defi}
A {\em dyadic subbase} $S$ of a space $X$ is a family
$\{S_{n,a}: \nn,  \a2\}$ of regular open sets indexed by $\N \times \2$ such that
(1) $S_{n,1}$ is the exterior of $S_{n,0}$ for each $\nn$ and
(2) it forms a subbase of $X$.
\end{defi}

\noindent
Note that we allow duplications in $S_{n,a}$ and therefore, for example,
a one point set $X = \{x\}$ has a dyadic subbase
$S_{n,0} = X, S_{n,1} = \emptyset \ (n = 0,1,\ldots)$.
Note also that
this definition is applicable also to non-Hausdorff spaces, 
though we only consider the case $X$ is Hausdorff in this paper.
We denote by $S_{n,\bot}$ the set  
$X \setminus (S_{n,0} \cup S_{n,1})$.
Since $S_{n,0}$ is regular open, we get
$\bd S_{n,0} = \bd S_{n,1} =S_{n,\bot}$.
Note that
$S_{n,\bot}$ is defined differently in \cite{Tsuiki:2004b}.

A topological space is called {\em semiregular}
if the family of regular open sets forms a base of $X$.  
It is immediate that a regular space is semiregular.
From the definition, a space with a dyadic subbase is
a second-countable semiregular space.
On the other hand, it is shown in \cite{Tsuiki:2004b} 
that every second-countable semiregular space has a dyadic subbase.

From a dyadic subbase $S$,
we obtain a topological embedding $\varphi_S: X \to \T^\omega$ as follows.
$$
\varphi_{S}(x)(n) = \left\{\begin{array}{ll}
0 &( x \in S_{n,0}),\\
1 &( x \in S_{n,1}),\\
\bot &(x \in S_{n,\bot}).
\end{array}
\right. 
$$
We denote by $\tilde x$ the sequence $\varphi_{S}(x) \in \T^{\omega}$ 
and denote by $\widetilde X$ the set $\varphi_{S}(X) \subseteq \T^{\omega}$
if there is no ambiguity of $S$.

In the sequence $\tilde{x}$,
if $\tilde{x}(n) = a$ for $\nn $ and $\a2$,
then this fact holds for some neighbourhood $A$ of $x$
because $S_{n,a}$ is open.
On the other hand, if $\tilde{x}(n) = \bot$, 
then  every neighbourhood $A$ of $x$ contains points $y_0$ and $y_1$
with $\tilde{y_0}(n) = 0$ and $\tilde{y_1}(n) = 1$.
Therefore, if $\tilde{x}(n) = \bot$,
then every neighbourhood $A$ of $x$ does not exclude
both of the possibilities $\tilde{x}(n) = 0$ and $\tilde{x}(n) = 1$.

\begin{figure}[t]
\begin{center}
\resizebox{13cm}{7cm}{\includegraphics{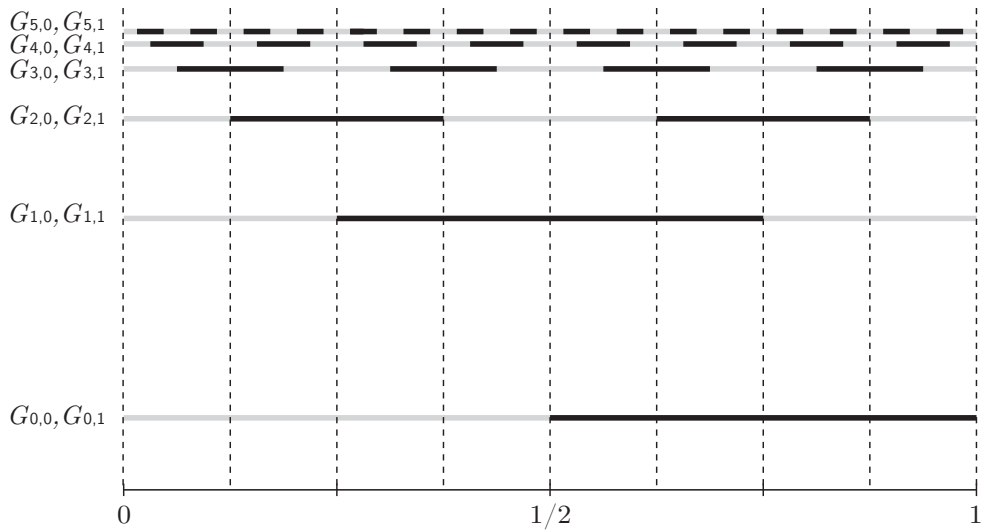}}
\caption{Gray subbase of the unit interval $\I$.\label{fig1}}
\end{center}
\end{figure}

\begin{exa}[Gray subbase]\label{e-1}
Let $\I=[0,1]$ be the unit interval
and let $X_0 = [0, 1/2)$ and $X_1 = (1/2, 1]$ be subsets of $\I$. 
The tent function is the function $t :\I \to\I$ defined as
\[
t(x) = \left\{
\begin{array}{ll}
2 x&(x \in \cl X_0),\\
2 (1- x)\ \ \ \  & (x \in \cl X_1).
\end{array}
\right.
\]
We define the dyadic subbase $G$ as 
$$
G_{n,a} = \{x : t^{n}(x) \in X_{a}\}
$$
for $\nn$ and $\a2$.
The map $\varphi_{G}$ is an embedding of the unit interval in $\T^{\omega}$
\cite{Gianantonio:1999,Tsuiki:2002}.
If $x$ is a dyadic rational number other than $0$ or $1$, then $\varphi_{G}(x)$ has the form
$e \bot 1 0^{\omega}$ for $e \in \2^*$,
and it is in $\2^{\omega}$ otherwise.
Figure \ref{fig1} shows the Gray subbase,
with the gray lines representing $G_{n,0}$
and the black lines representing $G_{n,1}$.
\end{exa}

For a dyadic subbase $S$ and $p \in \T^{\omega}$, let 
\begin{align}
&S(p)=\bigcap_{k \in \dom(p)}S_{k,p(k)},\\
&\bar{S}(p)=\bigcap_{k \in \dom(p)} {\cl S_{k,p(k)}}
= \bigcap_{k \in \dom(p)} ({S_{k,p(k)}} \cup {S_{k,\bot}})
\end{align}
denote the corresponding subsets of $X$.
Note that, for $x\in X$ and $p\in \T^{\omega}$,
\begin{alignat}{2}
x\in S(p) &\Leftrightarrow 
\tilde{x}(k)=p(k)\text{ for }k\in \dom (p)&
&\Leftrightarrow p\sse \tilde{x}
,\label{eq:s}\\
x\in \bar{S}(p) &\Leftrightarrow
\tilde{x}(k)\sqsubseteq p(k)\text{ for } k\in \dom(p)&
&\Leftrightarrow p\comp \tilde{x}.\label{eq:cs}
\end{alignat}

For $e \in \T^{*} = K(\T^{\omega})$, $S(e)$ is an element of 
the base generated by the subbase $S$.
We denote by $\BB_S$  the base 
$\{S(e): e \in \T^*\} \setminus \{\emptyset \}$.
On the other hand, $L(\T^{\omega})$ is the space in which 
$X$ is represented as the following proposition shows.

\begin{prop}
Suppose that $S$ is a dyadic subbase of a space $X$.
\begin{enumerate}
\item $S(\tilde{x})=\{x\}$ for all $x\in X$.
\item $\widetilde{X} \subseteq L(\T^{\omega})$.
\end{enumerate}
\end{prop}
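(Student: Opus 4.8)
The plan is to prove the two parts in sequence, using the characterizations \eqref{eq:s} and \eqref{eq:cs} together with the fact that $S$ is a subbase.

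\medbreak
\noindent\textbf{Part (1).}
First I would show $S(\tilde x)=\{x\}$. The inclusion $x\in S(\tilde x)$ is immediate from \eqref{eq:s} with $p=\tilde x$, since $\tilde x\sse\tilde x$. For the reverse inclusion, suppose $y\in S(\tilde x)$ with $y\neq x$. Since $X$ is Hausdorff (hence $T_0$) and $S$ is a subbase, there is some basic open set $S(e)$ (a finite intersection of subbasic sets $S_{n,a}$) containing $x$ but not $y$; equivalently there is an index $n$ and $a\in\2$ with $x\in S_{n,a}$ and $y\notin S_{n,a}$. Then $\tilde x(n)=a$, so $n\in\dom(\tilde x)$ and $S_{n,\tilde x(n)}=S_{n,a}$ appears in the intersection defining $S(\tilde x)$; but $y\notin S_{n,a}$ contradicts $y\in S(\tilde x)$. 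Hence $S(\tilde x)\subseteq\{x\}$, and the two inclusions give equality. (Note this only uses that $S$ separates points, i.e.\ $T_0$-ness plus the subbase property; Hausdorffness is not really needed here, but it is a standing assumption.)

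\medbreak
\noindent\textbf{Part (2).}
Now I would deduce $\widetilde X\subseteq L(\T^\omega)$, i.e.\ each $\tilde x$ is a limit (non-compact) element of $\T^\omega$. Recall $K(\T^\omega)=\T^*$, the finite bottomed sequences. So it suffices to show $\tilde x$ has infinitely many digits, i.e.\ $\dom(\tilde x)$ is infinite. Suppose for contradiction that $\dom(\tilde x)$ is finite, say $\dom(\tilde x)=F$ for a finite $F\subseteq\N$. Then $\tilde x=\tilde x|_m$ for $m$ large enough, so $\tilde x\in\T^*=K(\T^\omega)$. From part (1), $S(\tilde x)=\{x\}$, meaning $\{x\}=\bigcap_{k\in F}S_{k,\tilde x(k)}$ is a finite intersection of open sets, hence open; thus $\{x\}\in\BB_S$, so $x$ is an isolated point of $X$. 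The contradiction I want comes from the structure of a dyadic subbase: each $S_{n,a}$ is regular open, and if $x$ is isolated then $\{x\}$ is open, but also $X\setminus\{x\}$ is open, so $\{x\}$ is clopen; however I then need to rule this out. Here is where I must be careful — an isolated point is not actually forbidden in general. Instead, the right argument is: if $x$ is isolated, pick any $n$; since $\{x\}$ is open and $S_{n,a}$ is regular open, if $x\in S_{n,\bot}=\bd S_{n,0}$ then every neighbourhood of $x$ — in particular $\{x\}$ — meets both $S_{n,0}$ and $S_{n,1}$, which is impossible for a singleton. So $x\notin S_{n,\bot}$ for every $n$, i.e.\ $\tilde x\in\2^\omega$, which has infinitely many digits — contradicting finiteness of $\dom(\tilde x)$. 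Wait: this shows $\tilde x\in\2^\omega$ only under the isolated-point hypothesis, which already gives the contradiction. Cleaner: if $\dom(\tilde x)$ were finite, then as shown $\{x\}$ is open; but then for each $n$ the singleton $\{x\}\subseteq S_{n,\tilde x(n)}$ when $n\in\dom(\tilde x)$, and for $n\notin\dom(\tilde x)$ we have $x\in S_{n,\bot}$, forcing the open set $\{x\}$ to meet both $S_{n,0}$ and $S_{n,1}$ — impossible unless there is no such $n$, i.e.\ $\dom(\tilde x)=\N$ is infinite. Either way we reach a contradiction, so $\dom(\tilde x)$ is infinite and $\tilde x\in L(\T^\omega)$.

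\medbreak
\noindent\textbf{Main obstacle.}
The delicate point is Part (2): naively one expects $\tilde x$ to have infinitely many digits, but one must actually exploit that the $S_{n,a}$ are \emph{regular open} and form a subbase. The key lemma I rely on is the dichotomy, already recorded after the definition of $\varphi_S$: if $\tilde x(n)=\bot$ then every neighbourhood of $x$ meets both $S_{n,0}$ and $S_{n,1}$. Combined with Part (1) showing $\{x\}$ would be a neighbourhood of $x$ if $\dom(\tilde x)$ were finite, this forces $\dom(\tilde x)=\N$, hence $\tilde x\notin K(\T^\omega)=\T^*$. I expect the write-up of this contradiction to be the only non-routine part; Part (1) is a direct unwinding of \eqref{eq:s} and the subbase property.
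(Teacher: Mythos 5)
Your proposal is correct and follows essentially the same route as the paper: part (1) separates $x$ from $y$ by a (sub)basic set and unwinds \eqref{eq:s}, and part (2) derives from a finite $\dom(\tilde x)$ that $\{x\}=S(\tilde x)$ is open, contradicting $x\in S_{n,\bot}=\bd S_{n,0}$ for any $n\notin\dom(\tilde x)$. The only quibble is your parenthetical that $T_0$ suffices for part (1): producing a basic open set containing $x$ but \emph{not} $y$ uses $T_1$, which of course holds here since $X$ is Hausdorff.
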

\proof\hfill
\begin{enumerate}
\item Let $x,y$ be distinct elements in $X$.
Since $X$ is $T_1$, there exists $e\in \T^*$ such that
$x\in S(e)$ and $y\not\in S(e)$.
From \eqref{eq:s}, we have $e\sse \tilde{x}$ and $e\not\sse \tilde{y}$.
So we get $\tilde{x}\not\sse \tilde{y}$,
therefore, $y\not\in S(\tilde{x})$.

\item
Suppose that $\dom(\tilde{x})$ is finite.
Then $S(\tilde{x})$ is an open set and thus $\{x\}$ is a clopen set 
which  contradicts the fact that $x$
is on the boundary of $S_{n,a}$ for $n \not\in \dom(\tilde{x})$.
\qed
\end{enumerate}

\begin{defi}
We say that a dyadic subbase $S$ is {\em proper} if $\cl
S(e)=\bar{S}(e)$ for every $e \in \T^*$.  
\end{defi}

If $S$ is a proper dyadic subbase,  then 
$\bar{S}(e)$ is the closure of the base element $S(e)$.
Therefore, by (\ref{eq:cs}),
the sequence $\tilde{x}$ codes not only base elements 
to which $x$ belongs but also base elements to whose closure $x$ belongs.

\begin{prop}
\label{p:proper}
Suppose that $S$ is
a proper dyadic subbase of a space $X$.
\begin{enumerate}
\item
If $x\in X$ and $p \sqsupseteq \tilde{x}$,
then the family $\{S(e) : e \in K_{p}\}$ is a filter base
which converges to $x\in X$.
\item
If $x \neq y \in X$,
then $x \in S_{n,a}$ and $y \in S_{n,1-a}$ for some $\nn$ and $\a2$.
That is, $x$ and $y$ are separated by some subbase element.
\item
If $x\in X$ and $p \sqsupseteq \tilde{x}$, then $\bar{S}(p) = \{x\}$.
\item
If $p \in \2^\omega$, then $\bar{S}(p)$ is either
a one-point set $\{x\}$  for some $x\in X$ or the empty set.
\end{enumerate}
\end{prop}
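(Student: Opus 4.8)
The plan is to establish part~(3) first, because parts~(2) and~(4) will follow from it almost immediately, while part~(1) can be handled on its own. The key observation for~(3) is that $\bar S$ is antitone: if $q\sse p$ then $\dom(q)\subseteq\dom(p)$ and $q(k)=p(k)$ on $\dom(q)$, so the intersection defining $\bar S(p)$ ranges over a larger index set with matching values, whence $\bar S(p)\subseteq\bar S(q)$. Applying this with $q=\tilde x\sse p$ gives $\bar S(p)\subseteq\bar S(\tilde x)$. Recalling $S(\tilde x)=\{x\}$ from the preceding proposition and using that this set is closed (as $X$ is $T_1$), properness yields $\bar S(\tilde x)=\cl S(\tilde x)=\{x\}$. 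On the other hand $\tilde x\sse p$ gives $p\comp\tilde x$, hence $x\in\bar S(p)$ by~\eqref{eq:cs}. Combining, $\bar S(p)=\{x\}$.

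For part~(2), suppose no single subbase coordinate separates $x$ from $y$; that is, for every $n$ the pair $(\tilde x(n),\tilde y(n))$ is never $\{0,1\}$. Then $\tilde x(n)$ and $\tilde y(n)$ are compatible in $\T$ at every coordinate, so $\tilde x\comp\tilde y$ and $p:=\tilde x\lub\tilde y$ exists in $\T^\omega$. Since $p\sqsupseteq\tilde x$ and $p\sqsupseteq\tilde y$, part~(3) gives $\{x\}=\bar S(p)=\{y\}$, contradicting $x\neq y$. For part~(4), if $p\in\2^\omega$ and $x\in\bar S(p)$, then $p\comp\tilde x$ by~\eqref{eq:cs}; since $p$ is maximal this forces $\tilde x\sse p$, so $\bar S(p)=\{x\}$ by part~(3). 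Hence $\bar S(p)$ is a singleton whenever it is nonempty, and empty otherwise.

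For part~(1), first note that $K_p=\T^*\cap\down p$ is directed: any two of its members lie below $p$, hence are compatible, and their least upper bound is again a finite bottomed sequence below $p$; moreover $S(e_1)\cap S(e_2)=S(e_1\lub e_2)$. Since $\bot^\omega\in K_p$ and $S(\bot^\omega)=X$, the family $\{S(e):e\in K_p\}$ is nonempty, and each of its members is nonempty: for $e\in K_p$ we have $e\comp\tilde x$ (both are below $p$), so $x\in\bar S(e)=\cl S(e)$ by properness, forcing $S(e)\neq\emptyset$. Thus $\{S(e):e\in K_p\}$ is a filter base. To see that it converges to $x$, take $U\in\fk{V}(x)$; since $\BB_S$ is a base there is $e\in\T^*$ with $x\in S(e)\subseteq U$, and then $e\sse\tilde x\sse p$ by~\eqref{eq:s}, so $e\in K_p$ and $S(e)$ is a member of the filter base contained in $U$. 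The only real obstacle is to recognize that~(3) is the linchpin and to prove it through the antitonicity of $\bar S$ together with properness and $S(\tilde x)=\{x\}$, rather than attempting a direct separation argument, which would appear to require regularity of $X$ that is not available here; once~(3) is in hand, the remaining parts are bookkeeping.
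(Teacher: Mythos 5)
Your part (1), and your derivations of (2) and (4) from (3), are fine, but the proof of part (3) --- which you correctly identify as the linchpin --- has a genuine gap. You write that ``properness yields $\bar S(\tilde x)=\cl S(\tilde x)=\{x\}$.'' Properness is defined only for \emph{finite} bottomed sequences: $\cl S(e)=\bar S(e)$ for $e\in\T^*$. The sequence $\tilde x$ is an infinite bottomed sequence (recall $\widetilde X\subseteq L(\T^\omega)$), so the definition does not apply to it, and the identity $\bar S(p)=\cl S(p)$ genuinely fails for infinite $p$: for the Gray subbase one has $\bar S(010^\omega)=\{1/2\}$ while $S(010^\omega)=\emptyset$. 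Antitonicity together with properness of the finite truncations only gives $\bar S(\tilde x)=\bigcap_n\cl S(\tilde x|_n)$, and an intersection of closures can be strictly larger than the closure of the intersection $\cl S(\tilde x)=\{x\}$; showing that no $y\neq x$ survives in $\bigcap_n\cl S(\tilde x|_n)$ is exactly the separation statement (2), which you propose to deduce \emph{from} (3). As it stands your order (3) $\Rightarrow$ (2) is circular, with the separation content hidden inside the unjustified step.

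The repair is the paper's order: prove (2) first, directly from Hausdorffness --- not regularity, so your closing worry that a separation argument is unavailable here is unfounded. Given $x\neq y$, take disjoint open sets $U\ni x$ and $V\ni y$ and a base element with $x\in S(e)\subseteq U$; then $y\notin\cl S(e)=\bar S(e)$ (properness, now legitimately applied to the finite $e$), i.e. $e\sse\tilde x$ and $e\ncomp\tilde y$, whence $\tilde x\ncomp\tilde y$. This yields $\bar S(\tilde x)=\{x\}$, after which your antitonicity argument finishes (3), and your (4) and (1) stand as written (your (1) is in fact more detailed than the paper's own sketch, which only checks nonemptiness of the members).
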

\proof\hfill
\begin{enumerate}
\item Since we have $\tilde{x}\comp e$ for every $e\in K_{p}$,
we obtain $\cl S(e)=\bar{S}(e)\neq \emptyset$.
Therefore, we get $\emptyset \not\in \{S(e) : e \in K_{p}\}$.

\item Since $X$ is Hausdorff, there is $e \in \T^{*}$ such that
$x \in S(e)$ and $y \not \in \cl{S(e)} = \bar{S}(e)$.
That is, $e\sse \tilde{x}$ and $e\ncomp \tilde{y}$ by (\ref{eq:s}) and (\ref{eq:cs}).
Therefore, 
we get $\tilde{x}\ncomp \tilde{y}$.

\item From (2), 
we have $\bar{S}(\tilde{x}) =\{x\}$.
We get
$\bar{S}(p) \subseteq \bar{S}(\tilde{x})=\{x\}$
from $p \sqsupseteq \tilde{x}$,
and $\bar{S}(p) \ni x$ from $p \comp \tilde{x}$.

\item Let $p \in \2^{\omega}$.
If $p \sqsupseteq \tilde{x}$ for some $x\in X$,
then $\bar{S}(p)$ is a one-point set $\{x\}$ by (3). 
If $p \not\sqsupseteq \tilde{x}$ for all $x\in X$,
then $p\ncomp \tilde{x}$, because $p$ is maximal.
Therefore, $\bar{S}(p)$ is empty.\qed
\end{enumerate}
\cite{Tsuiki:2004b} contains 
an example of a non-proper dyadic subbase for which Proposition \ref{p:proper}
(1) to (4) do not hold.

Finally, we define a property of a dyadic subbase which is stronger than  properness.

\begin{defi}
An {\em independent subbase} is a dyadic subbase such that $S(e)$ is not empty
for every $e \in \T^*$.
\end{defi}

\begin{prop}[\cite{OTY}]\label{ppp1}
An independent subbase is proper.
\end{prop}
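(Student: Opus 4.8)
The plan is to prove the nontrivial inclusion only. Note that $\cl S(e) \subseteq \bar{S}(e)$ holds for every $e \in \T^{*}$ with no hypothesis at all, because $\bar{S}(e) = \bigcap_{k \in \dom(e)} \cl S_{k,e(k)}$ is a closed set containing $S(e)$. So it suffices to show $\bar{S}(e) \subseteq \cl S(e)$, and this is where independence of $S$ enters.

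First I would fix $e \in \T^{*}$ and a point $x \in \bar{S}(e)$, and argue that every neighbourhood of $x$ meets $S(e)$. Since $\BB_S$ is a base of $X$, any neighbourhood of $x$ contains a basic open set of the form $S(d)$ with $d \in \T^{*}$ and $x \in S(d)$; by \eqref{eq:s} this last condition says exactly $d \sse \tilde{x}$. It therefore remains to show $S(d) \cap S(e) \neq \emptyset$ for every such $d$.

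The key step is to observe that $d$ and $e$ are compatible in $\T^{*}$. Indeed, for $k \in \dom(d) \cap \dom(e)$ we have $d(k) = \tilde{x}(k)$ (from $d \sse \tilde{x}$), so in particular $d(k) \neq \bot$; on the other hand $x \in \bar{S}(e)$ gives $\tilde{x}(k) \sse e(k)$ by \eqref{eq:cs}, and since $\tilde{x}(k) \neq \bot$ this forces $\tilde{x}(k) = e(k)$. Hence $d(k) = e(k)$ throughout the common domain, so $f := d \lub e$ is a well-defined element of $\T^{*}$, and one checks directly from the definitions that $S(d) \cap S(e) = S(f)$ (the point being that $S_{k,0} \cap S_{k,1} = \emptyset$, so incompatibility would be the only way to get the empty set). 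By independence, $S(f) \neq \emptyset$, so $S(d)$ meets $S(e)$. As $S(d)$ ranged over arbitrary basic neighbourhoods of $x$, we conclude $x \in \cl S(e)$, which completes the inclusion $\bar{S}(e) \subseteq \cl S(e)$.

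There is no real obstacle here; the only point that requires care is the bookkeeping that turns "$x \in S(d)$ and $x \in \bar{S}(e)$" into compatibility of $d$ and $e$, and then into the identity $S(d) \cap S(e) = S(f)$, after which the independence hypothesis is applied in one line.
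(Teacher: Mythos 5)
Your proof is correct. Note that the paper itself offers no argument for this proposition -- it is stated with a citation to [OTY] -- so there is nothing internal to compare against; your argument supplies the missing details. The two steps that matter are exactly the ones you isolate: (i) $\bar{S}(e)$ is closed and contains $S(e)$, giving the easy inclusion unconditionally; and (ii) for $x\in\bar{S}(e)$ and a basic neighbourhood $S(d)\ni x$, the conditions $d\sse\tilde{x}$ and $\tilde{x}(k)\sse e(k)$ on $\dom(e)$ force $d$ and $e$ to agree on $\dom(d)\cap\dom(e)$ (since $d(k)=\tilde{x}(k)\ne\bot$ there), so $f=d\lub e$ exists in $\T^{*}$, $S(d)\cap S(e)=S(f)$, and independence gives $S(f)\ne\emptyset$. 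This is the natural argument and it is complete.
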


The Gray subbase in Example \ref{e-1} is an independent subbase and
we show many independent subbases as examples of
proper dyadic subbases.
When $S$ is an independent subbase, we have
$S(d) \supseteq S(e)$ if and only if $d \sse e$. 
In particular, $S(d) \ne  S(e)$ if $d \ne e$. 
Therefore, for an independent subbase $S$,
the poset $(\BB_S, \supseteq)$ ordered by reverse inclusion
is isomorphic to $\T^{*}$.

\section{Representations and domain representations\\
derived from dyadic subbases}
\label{s-3}

We study some representations and
domain representations of a space $X$
derived from a (proper) dyadic subbase of $X$.  

We introduce two representations.
The first one is immediately derived from a dyadic subbase.
If $S$ is a dyadic subbase of $X$,
then the inverse $\varphi_{S} ^{-1}$ of the embedding $\varphi_{S}$
is a representation of $X$
with the alphabet $\Gamma = \{0,1,\bot \}$
where $\bot$ is considered as an ordinary character of $\Gamma$.  
Each point is represented uniquely with this representation
and it is easy to show that $\varphi_{S}^{-1}: \subseteq \Gamma^{\omega} \to X$ is an
admissible representation
 if and only if $S_{n,\bot} = \emptyset$ for every $n$.

The second one is derived from a proper dyadic subbase.
If $S$ is a proper dyadic subbase of $X$,
from Proposition \ref{p:proper} (3),
we have a map $\rho_S$ from $\upa{\widetilde{X}} \subset \sigbot$ to $X$
such that $\rho_{S}(p)$ is the unique element in $\bar{S}(p)$.
In particular,
from Proposition \ref{p:proper} (4),
$\rho_S$ restricted to the maximal elements $\2^{\omega}$ is
a partial surjective map from $\2^{\omega}$ to $X$,
that is, it is a representation of $X$ which we denote by $\rho_S'$.    

\begin{exa}
For the Gray subbase $G$ of $\I$,
$\rho_{G}'$ is a total function from $\2^{\omega}$ to $\I$
which is called the Gray expansion of $\I$~\cite{Tsuiki:2002}.
$\rho_{G}'$ is equivalent to the binary expansion through simple conversion functions.
\end{exa}

As this example suggests, we consider that $\rho_{S}'$
is a generalization of the binary expansion representation to a proper dyadic subbase $S$.
We study its continuity in Proposition \ref{p:1234}.
It is not admissible in general as the following proposition shows.

\begin{prop}\label{p:adm1}
Suppose that $S$ is a proper dyadic subbase of a space $X$.
$\rho_{S}'$ is admissible
if and only if $S_{n,\bot} = \emptyset$ for every $n$.
\end{prop}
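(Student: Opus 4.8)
The plan is to compare $\rho_S'$ with the standard representation $\delta_S:\subseteq\N^\omega\to X$ of $X$ with respect to the subbase $\{S_{n,a}\}$, using the standard fact that admissibility does not depend on the (countable, size at least $2$) alphabet, so that this comparison is legitimate (\cite{Schroder2002,Weihrauch}).

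For the implication $(\Leftarrow)$, assume $S_{n,\bot}=\emptyset$ for all $n$. Then every $\tilde x$ is a maximal element of $\T^\omega$, so $\widetilde{X}\subseteq\2^\omega$; by Proposition~\ref{p:proper}(3) and~(4), a point $p\in\2^\omega$ with $\bar S(p)\ne\emptyset$ has the form $\tilde x$ with $\bar S(\tilde x)=\{x\}$, whence $\dom(\rho_S')=\widetilde{X}$ and $\rho_S'=\varphi_S^{-1}$. I would then exhibit continuous reductions in both directions between $\varphi_S^{-1}$ and $\delta_S$: from $\tilde x$ one computably enumerates $\{(n,a):x\in S_{n,a}\}$; conversely, from such an enumeration one recovers $\tilde x(n)$ as the unique $a$ with $(n,a)$ listed, which exists because $S_{n,\bot}=\emptyset$ and is unique because $S_{n,0}$ and $S_{n,1}$ are disjoint. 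Hence $\rho_S'$ is continuously equivalent to a standard representation and therefore admissible.

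For the implication $(\Rightarrow)$ I argue by contraposition. Suppose $x_0\in S_{n_0,\bot}$ for some $n_0$, and assume for contradiction that $\rho_S'$ is admissible; then $\delta_S$, being continuous, is continuously reducible to $\rho_S'$ via some continuous $\phi$. Since $\widetilde{X}\subseteq L(\T^\omega)$, the point $x_0$ lies in infinitely many subbase elements, so we may fix an enumeration $q_0$ of $\{(n,a):x_0\in S_{n,a}\}$. From $\rho_S'(\phi(q_0))=x_0$ we get $\phi(q_0)\comp\tilde{x_0}$, and since $\phi(q_0)\in\2^\omega$ we have $\phi(q_0)(n_0)=a_0$ for some fixed $a_0\in\2$. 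Continuity of $\phi$ provides an $N$ such that $\phi(q)(n_0)=a_0$ whenever $q\in\dom(\phi)$ agrees with $q_0$ on the first $N$ coordinates. The finitely many subbase elements coded in the first $N$ coordinates of $q_0$ all contain $x_0$, so their intersection $U$ is a neighbourhood of $x_0$; since $x_0\in\bd S_{n_0,1-a_0}$, we may choose $y\in U\cap S_{n_0,1-a_0}$. Then $y$ lies in those finitely many subbase elements and, as before, in infinitely many subbase elements, so there is an enumeration $q_y$ of $\{(n,a):y\in S_{n,a}\}$ agreeing with $q_0$ on the first $N$ coordinates. Now $\delta_S(q_y)=y$, so $\phi(q_y)\comp\tilde y$, yet $\phi(q_y)(n_0)=a_0\ne 1-a_0=\tilde y(n_0)$, which is a contradiction since $a_0$ and $1-a_0$ are incompatible in $\T$.

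The main obstacle is the $(\Rightarrow)$ direction: the continuity of $\phi$ yields only a \emph{finite} positive commitment about $x_0$, namely membership in the finitely many subbase elements coded in the first $N$ coordinates of $q_0$, and the argument succeeds precisely because $x_0$ lies on the boundary $\bd S_{n_0,1-a_0}$, so that this finite commitment is still compatible with a point $y$ on the opposite side $S_{n_0,1-a_0}$, forcing $\phi$ to produce an output incompatible with $\tilde y$. A minor point requiring care is the alphabet issue noted at the outset, which is what allows working with the $\N$-indexed $\delta_S$ instead of a $\2^\omega$-coded variant.
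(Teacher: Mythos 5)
Your proof is correct. The ``if'' direction is essentially the paper's: when every $S_{n,\bot}$ is empty, $\rho_S'$ coincides with $\varphi_S^{-1}$ and is continuously equivalent to the standard representation $\delta_S$, the key point being that for each $n$ exactly one of $(n,0),(n,1)$ eventually appears in an enumeration, so the $n$-th symbol can be read off continuously. The ``only if'' direction, however, takes a genuinely different route. The paper black-boxes Theorem~12 of Brattka--Hertling (every admissible representation has a continuously equivalent \emph{open} restriction) and then observes that openness would force $S_{n,a}\cup S_{n,\bot}$ to be a neighbourhood of a point of $S_{n,\bot}$, which is absurd since such a point lies on $\bd S_{n,1-a}$. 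You instead argue directly from the definition of admissibility: the continuous $\delta_S$ must reduce to $\rho_S'$ via some continuous $\phi$, a finite prefix of an enumeration of $\{(n,a):x_0\in S_{n,a}\}$ commits $\phi$ to a digit $a_0$ at position $n_0$ while only pinning down finitely many open sets containing $x_0$, and since $x_0\in\bd S_{n_0,1-a_0}$ their intersection still meets $S_{n_0,1-a_0}$, yielding a $y$ whose image under $\phi$ is incompatible with $\tilde y$. Your version is self-contained (no appeal to the open-restriction theorem) at the cost of having to invoke alphabet-invariance of admissibility to compare a representation over $\N$ with one over $\2$, a point you correctly flag and which is indeed standard; the paper's version is shorter but imports a nontrivial external result. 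Both the existence of $q_y$ (using that every point lies in infinitely many subbase elements, since $\widetilde{X}\subseteq L(\T^{\omega})$) and the incompatibility $\phi(q_y)(n_0)=a_0\neq 1-a_0=\tilde y(n_0)$ are justified, so there is no gap.
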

\proof  
Only if part: suppose that $\rho_{S}'$ is admissible and $x \in S_{n,\bot}$.
Theorem 12 of \cite{BrattkaH02} says that every admissible representation
has a continuously equivalent open restriction.
Suppose that
$\delta:\subseteq \2^{\omega} \to X$ is such an open restriction of $\rho_{S}'$ and $x = \delta(p)$.  
Let $a =  p(n)$.
Since $\delta$ is an open map,
$\delta(\{q \in \2^{\omega}: q(n) = a\})$ is an open neighbourhood of $x$,
and since $\delta$ is a restriction of  $\rho_{S}'$,
$\delta(\{q \in \2^{\omega}: q(n) = a\}) \subseteq
\rho_{S}'(\{q \in \2^{\omega}: q(n) = a\}) = S_{n,a} \cup S_{n,\bot}$.
Therefore, $S_{n,a} \cup S_{n,\bot}$ is a neighbourhood of $x$,
which contradicts with $x \in S_{n,\bot}$.

If part:  since the base $\BB_{S}$ is composed of closed and open sets, $X$ is regular
and therefore $\rho_{S}'$ is continuous by Proposition \ref{p:1234} below.
Since  $S_{n,\bot}$ is empty,
$x \in S_{n,0}$ or $x \in S_{n,1}$ holds for every $x \in X$.
Therefore, one can construct a reduction
from the standard representation of $X$ with respect to an enumeration of the subbase $S$
to $\rho'_{S}$.  
\qed

Next, we study domain representations.
We start with a general construction of a domain representation from a base of a space.
Suppose that $\BB$ is a base of a space $X$ such that 
$\emptyset \not\in \BB$, $X \in \BB$, and
$\BB$ is closed under finite non-empty intersection.
For the domain $D_{\BB}$ obtained as the ideal completion of the poset
$(\BB, \supseteq)$ with the reverse inclusion and
for the map $\iota(x) = \{U \in \BB : x \in U\}$ from $X$ to $D_{\BB}$,
$\iota$  is a homeomorphic embedding of $X$ in $D_{\BB}$.
Therefore, $(D_{\BB}, \iota(X), \iota^{-1})$  is a 
homeomorphic domain representation which 
is known to be admissible \cite{Blanck:2000,Hamrin,StoTucker08}.

We introduce two domain representations derived from (proper) dyadic subbases.
The first one is $(\T^{\omega}, \widetilde{X} , \varphi_{S}^{-1})$, which is defined for a space $X$ with a dyadic subbase $S$. 
Since $\varphi_{S}$ is an embedding, it is a 
homeomorphic domain representation.
In particular,
if $S$ is an independent subbase,
then the poset $\BB_S$ is isomorphic to the poset $\T^*$.
Therefore, the domain $D_{\BB_S}$ is isomorphic to $\T^{\omega}$
and thus the domain representations $(\T^{\omega}, \widetilde{X} , \varphi_{S}^{-1})$
and $(D_{\BB_S}, \iota(X), \iota^{-1})$ coincide.
However, if $S$ is a dyadic subbase which is not independent, 
then the poset $\T^*$ provides only a ``notation'' of the base $\BB_S$,
and a set $S(d)$ may be the same as $S(e)$
for $d \ne e \in \T^{*}$.
We show that $(\T^{\omega}, \widetilde{X} , \varphi_{S}^{-1})$ is
an admissible domain representation even for this case.

\begin{prop}\label{admwtX}
If $S$ is a dyadic subbase of a space $X$, then
$(\T^{\omega}, \widetilde{X} , \varphi_{S}^{-1})$
is an admissible domain representation.
\end{prop}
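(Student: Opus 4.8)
The plan is to show that $(\T^{\omega}, \widetilde{X}, \varphi_S^{-1})$ is an admissible domain representation by exploiting the homeomorphic embedding $\iota$ of $X$ into $D_{\BB_S}$, which the excerpt has already recalled to be admissible. Concretely, I would set up a comparison between $\T^{\omega}$ and $D_{\BB_S}$ via the map $\T^* \to \BB_S$ sending $e \mapsto S(e)$ (interpreting $S(e) = \emptyset$ as ``no base element'', which must be handled with a little care). This map is monotone from $(\T^*, \sse)$ to $(\BB_S, \supseteq) \cup \{\emptyset\}$ and surjective onto $\BB_S$ by definition of $\BB_S$; because $S(d \lub e) \subseteq S(d) \cap S(e)$ whenever $d \comp e$, it respects finite compatible joins. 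The idea is that $\T^{\omega}$ is just a possibly-redundant ``notation system'' for the base $\BB_S$, so the two domain representations should reduce continuously to each other, and admissibility transfers.

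The key steps, in order, are: (1) Recall from the earlier general construction that $(D_{\BB_S}, \iota(X), \iota^{-1})$ is an admissible homeomorphic domain representation, where $\iota(x) = \{U \in \BB_S : x \in U\}$. (2) Define a continuous (Scott-continuous) map $\psi : \T^{\omega} \to D_{\BB_S}$ by sending $p$ to the ideal generated by $\{S(e) : e \in K_p,\ S(e) \neq \emptyset\}$; check this is a directed downward-closed subset of $(\BB_S, \supseteq)$ using that $K_p$ is directed in $\T^*$ and $S(\cdot)$ is antitone-into-reverse-inclusion, hence Scott-continuous since it preserves the relevant lubs. (3) Verify the compatibility condition $\psi(\widetilde X) \subseteq \iota(X)$ and $\iota^{-1}(\psi(\tilde x)) = x$: by \eqref{eq:s}, $S(e) \ni x$ iff $e \sse \tilde x$, so $\psi(\tilde x)$ is exactly the ideal $\{U \in \BB_S : x \in U\}^{\downarrow} = \iota(x)$. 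This shows $(\T^{\omega}, \widetilde X, \varphi_S^{-1})$ reduces continuously to $(D_{\BB_S}, \iota(X), \iota^{-1})$. (4) Conversely, build a continuous section-like map $\chi : D_{\BB_S} \to \T^{\omega}$ by choosing, Scott-continuously, for each finite ideal a canonical $e \in \T^*$ with $S(e)$ equal to the corresponding base element — or, more cleanly, observe directly that admissibility of the target together with a continuous reduction in the other direction suffices via the standard fact that admissible domain representations are characterized by a universal property. Then conclude that $(\T^{\omega}, \widetilde X, \varphi_S^{-1})$ inherits admissibility: given any domain $E$ with dense $E^R$ and continuous $\nu : E^R \to X$, compose the factorization through $D_{\BB_S}$ with $\chi$.

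Alternatively, and perhaps more in the spirit of the earlier development, I would verify admissibility directly from the definition: given $(E, E^R)$ and continuous $\nu : E^R \to X$, I need a continuous $\phi : E \to \T^{\omega}$ with $\nu(x) = \varphi_S^{-1}(\phi(x))$ on $E^R$, i.e.\ $\phi(x) = \widetilde{\nu(x)}$ on $E^R$. Since each $S_{n,a}$ is open and $\nu$ is continuous, $\nu^{-1}(S_{n,a})$ is open in $E^R$, hence equals $U_{n,a} \cap E^R$ for some Scott-open $U_{n,a} \subseteq E$; the density of $E^R$ lets one pin down $\phi(y)(n)$ by declaring it $a$ when $y \in U_{n,a}$ and $\bot$ otherwise, and Scott-openness makes $\phi$ continuous into $\T^{\omega}$ coordinatewise. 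One then checks $\phi(x) \sse \widetilde{\nu(x)}$ for $x \in E^R$ and that equality in fact holds because $\nu(x) \in S_{n,a}$ forces $x \in U_{n,a}$ by construction. The main obstacle I anticipate is the bookkeeping around \emph{non-independent} subbases: when $S(d) = S(e)$ for $d \neq e$, the map $\T^* \to \BB_S$ is not injective, so one must be careful that the two domain representations genuinely reduce \emph{continuously} to each other (a computable choice of notation is not needed, only a continuous one), and that the $\bot$-coordinates of $\varphi_S$ — which correspond to $x$ lying on the boundary $S_{n,\bot}$ rather than in any chosen base element — are correctly produced as limits rather than being forced to a digit prematurely. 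Getting the Scott-continuity of $\phi$ right at exactly those coordinates, using density of $E^R$, is where the real work lies.
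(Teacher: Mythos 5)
Your ``alternatively'' paragraph is, in substance, the paper's own proof: the paper defines $\phi$ on compact elements by declaring $\phi(e)(n)=a$ iff $\nu(\upa{e}\cap E^{R})\subseteq S_{n,a}$ and extends by continuity, which is exactly your coordinatewise construction with $U_{n,a}$ the union of those $\upa{e}$ on which $\nu$ is forced into $S_{n,a}$. The two points to make explicit are (i) density of $E^{R}$ is what makes $\phi(y)(n)$ well defined --- $U_{n,0}\cap U_{n,1}$ is an open set missing the dense set $E^{R}$, hence empty --- and (ii) for $x\in E^{R}$ one gets equality $\phi(x)=\widetilde{\nu(x)}$, not just $\sse$, because $\nu(x)\in S_{n,a}$ together with continuity of $\nu$ already produces a compact $e\sse x$ with $\nu(\upa{e}\cap E^{R})\subseteq S_{n,a}$. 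Your anticipated difficulties with non-independent subbases and with the $\bot$-coordinates do not arise on this route: the construction never mentions $\BB_S$, and a coordinate stays $\bot$ exactly when no Scott neighbourhood of $y$ decides membership in $S_{n,0}$ versus $S_{n,1}$.

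Your primary route, by contrast, puts all the weight on the one step you leave vague. The reduction $\psi:\T^{\omega}\to D_{\BB_S}$ of steps (2)--(3) transfers admissibility in the wrong direction (it would derive admissibility of the $D_{\BB_S}$-representation from that of the $\T^{\omega}$-representation, not conversely); what you actually need is only the map $\chi:D_{\BB_S}\to\T^{\omega}$ of step (4), so that the factorization $\nu=\iota^{-1}\circ\phi'$ becomes $\nu=\varphi_S^{-1}\circ(\chi\circ\phi')$. Moreover, the construction you sketch for $\chi$ --- a ``canonical'' choice of $e\in\T^{*}$ with $S(e)$ equal to a given base element --- is genuinely problematic: precisely because $e\mapsto S(e)$ is not injective, a choice of notations need not be monotone, so it need not extend to a Scott-continuous map on ideals. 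The repair is again coordinatewise: set $\chi(I)(n)=a$ iff $U\subseteq S_{n,a}$ for some $U\in I$; this is well defined since $I$ is directed under reverse inclusion and $\emptyset\notin\BB_S$, it is monotone and hence extends continuously, and $\chi(\iota(x))=\tilde{x}$ because $S_{n,a}=S(\bot^{n}a)$ lies in $\iota(x)$ whenever $x\in S_{n,a}$. With that fix your first route also closes, but it is strictly more machinery than the direct argument the paper uses.
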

\proof 
Suppose that $E^{R}$ is a subset of a domain $E$
and $\mu$ is a continuous map from $E^{R}$ to $X$.
Define a function $\phi: K(E) \to \T^{\omega}$ as
$\phi(e)(n) = a$ if and only if
$\mu(\upa{e} \cap E^{R}) \subseteq S_{n,a}$.
Since $\phi$ is monotonic, 
it has a continuous extension to $E$, which is a continuous function from
$E$ to $\T^{\omega}$.  It
is also denoted by $\phi$.
We show that the function $\phi$ satisfies
$\varphi^{-1}_{S} (\phi (p)) = \mu(p)$ for $p \in E^{R}$.
We have
$\phi(p)(n) = \lub_{e \in K_{p}} \phi(e)(n)$.
Therefore, for $\a2$, $\phi(p)(n) = a$
if and only if 
$(\exists e \in K_{p}) ( \phi(e)(n) = a) $,
if and only if
$(\exists e \in K_{p}) (\mu(\upa e \cap E^{R}) \subseteq S_{n,a})$,
if and only if
$\mu(p) \in S_{n,a}$.
Therefore, $\phi(p) =\varphi_{S}(\mu(p))$.
\qed

The other domain representation is 
$(\T^\omega, \upa{\widetilde{X}}, \rho_S)$, which is defined
for a regular space $X$ with a proper dyadic subbase $S$.
Suppose that $S$ is a proper dyadic subbase of a space $X$.
From Proposition \ref{p:proper}, 
$\rho_{S}$ is a map from $\upa{\widetilde{X}}$ to $X$.
We have $\varphi_{S}(\rho_{S}(p)) \sse p$ and $\rho_{S}(\varphi_{S}(x)) = x$.
Therefore, 
$(\T^\omega, \upa{\widetilde{X}}, \rho_S)$ is
an upwards-closed retract domain representation
if and only if $\rho_{S}$  is a quotient map.
Blanck showed in Theorem 5.10 of \cite{Blanck:2000} that 
if a topological space has an upwards-closed retract Scott domain representation,
then it is a regular Hausdorff space.    
Therefore, $(\T^\omega, \upa{\widetilde{X}}, \rho_S)$ 
is a domain representation
only if $X$ is regular.  
We show this fact as a corollary to the following equivalence.

\begin{prop}  Let $S$ be a proper dyadic subbase of a space $X$.
The followings are equivalent.
\begin{enumerate}
\item $X$ is regular.
\item $\rho_S: \upa{\widetilde{X}} \to X$ is continuous.
\item $\rho_S' : \subseteq \2^{\omega} \to X$ is continuous.
\end{enumerate}\label{p:1234}
\end{prop}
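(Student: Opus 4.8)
The plan is to prove the cycle $(1)\Rightarrow(2)\Rightarrow(3)\Rightarrow(1)$; the first two implications are short, and essentially all the content lies in $(3)\Rightarrow(1)$.

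For $(1)\Rightarrow(2)$ I would argue as follows. Suppose $X$ is regular, fix $p\in\upa\widetilde{X}$, put $x=\rho_{S}(p)$ (so $\bar{S}(p)=\{x\}$ and $\tilde{x}\sse p$), and let $U$ be open with $x\in U$. By regularity there is an open $V$ with $x\in V\subseteq\cl V\subseteq U$, and since $\BB_{S}$ is a base there is $e\in\T^{*}$ with $x\in S(e)\subseteq V$; then $e\sse\tilde{x}\sse p$, and by properness $\bar{S}(e)=\cl S(e)\subseteq\cl V\subseteq U$. Now $\upa{e}\cap\upa\widetilde{X}$ is a Scott-open neighbourhood of $p$, and for every $q$ in it we have $e\sse q$, hence $\bar{S}(q)\subseteq\bar{S}(e)$, so $\rho_{S}(q)\in\bar{S}(e)\subseteq U$; thus $\rho_{S}$ is continuous. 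The implication $(2)\Rightarrow(3)$ is then immediate, since $\dom(\rho_{S}')=\2^{\omega}\cap\upa\widetilde{X}$ by Proposition~\ref{p:proper}(4), this set carries the topology it inherits as a subspace of $\upa\widetilde{X}$, and $\rho_{S}'$ is the restriction of $\rho_{S}$ to it.

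For $(3)\Rightarrow(1)$ I would first record how $\rho_{S}'$ acts on basic open sets: for $e\in\2^{*}$ of length $n$ one has $\rho_{S}'(\upa{e}\cap\dom\rho_{S}')=\bar{S}(e)$, since $y$ lies on the left iff some maximal $q$ extends both $e$ and $\tilde{y}$, iff $\tilde{y}(k)\sse e(k)$ for all $k<n$, iff $y\in\bar{S}(e)$ by \eqref{eq:cs}. Consequently continuity of $\rho_{S}'$ amounts to: for every $x$, every $p\in\2^{\omega}$ with $p\spe\tilde{x}$, and every open $U\ni x$ there is $n$ with $\bar{S}(p|_{n})\subseteq U$. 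Now fix $x$ and open $U\ni x$. The fibre $(\rho_{S}')^{-1}(x)=\{p\in\2^{\omega}:p\spe\tilde{x}\}$ consists exactly of the sequences in $\2^{\omega}$ agreeing with $\tilde{x}$ on $\dom\tilde{x}$; it is closed in $\2^{\omega}$, hence compact. Choosing $n_{p}$ for each $p$ in the fibre with $\bar{S}(p|_{n_{p}})\subseteq U$, the sets $\upa{p|_{n_{p}}}\cap\2^{\omega}$ cover the fibre, so finitely many, say for $p_{1},\dots,p_{r}$, already do; set $M=\max_{i}n_{i}$. It then suffices to show $\cl S(\tilde{x}|_{M})\subseteq U$, because $x\in S(\tilde{x}|_{M})$ and $S(\tilde{x}|_{M})$ is open, which yields regularity. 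Writing $F=\{0,\dots,M-1\}\setminus\dom\tilde{x}$, and for $\tau\in\2^{F}$ letting $d_{\tau}\in\T^{*}$ be defined on $\{0,\dots,M-1\}$, agreeing with $\tilde{x}$ on $\dom\tilde{x}\cap\{0,\dots,M-1\}$ and with $\tau$ on $F$, one has $\bar{S}(d_{\tau})=\bar{S}(\tilde{x}|_{M})\cap\bigcap_{k\in F}\cl S_{k,\tau(k)}$; since $\cl S_{k,0}\cup\cl S_{k,1}=X$ for every $k$ (because $\bd S_{k,0}=S_{k,\bot}$) and $\bigcup_{\tau\in\2^{F}}\bigcap_{k\in F}\cl S_{k,\tau(k)}=\bigcap_{k\in F}(\cl S_{k,0}\cup\cl S_{k,1})=X$, we get $\bigcup_{\tau\in\2^{F}}\bar{S}(d_{\tau})=\bar{S}(\tilde{x}|_{M})=\cl S(\tilde{x}|_{M})$ by properness. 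Finally each $\bar{S}(d_{\tau})\subseteq U$: extend $\tau$ to $\sigma$ on $\N\setminus\dom\tilde{x}$, take the corresponding point $p^{\sigma}$ of the fibre; it lies in some $\upa{p_{i}|_{n_{i}}}$, so $p^{\sigma}|_{n_{i}}=p_{i}|_{n_{i}}$, and $d_{\tau}=p^{\sigma}|_{M}$ with $M\ge n_{i}$ gives $\bar{S}(d_{\tau})\subseteq\bar{S}(p_{i}|_{n_{i}})\subseteq U$. Hence $\cl S(\tilde{x}|_{M})\subseteq U$ and $X$ is regular.

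I expect the main obstacle to be precisely this last implication, and within it the decomposition $\cl S(\tilde{x}|_{M})=\bar{S}(\tilde{x}|_{M})=\bigcup_{\tau\in\2^{F}}\bar{S}(d_{\tau})$ of the relevant closed set into the finitely many sets obtained by filling in the $\bot$-coordinates of $\tilde{x}$ below $M$ in all possible ways: it is compactness of the fibre $(\rho_{S}')^{-1}(x)$ that makes it possible to bound all of these simultaneously by a single $M$. Everything else should be routine bookkeeping with \eqref{eq:s}, \eqref{eq:cs} and the definition of properness.
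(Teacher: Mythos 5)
Your proof is correct and follows essentially the same route as the paper's: the same regularity-plus-properness argument for $(1)\Rightarrow(2)$, and for $(3)\Rightarrow(1)$ the same use of compactness of the fibre $\upa{\tilde{x}}\cap\2^{\omega}$ to obtain a uniform cut-off $M$, followed by the decomposition of $\bar{S}(\tilde{x}|_{M})=\cl S(\tilde{x}|_{M})$ into the finitely many $\bar{S}(d_{\tau})$ obtained by filling in the $\bot$-coordinates. The only differences are cosmetic (you cover the fibre with $\upa{p|_{n_{p}}}$ where the paper uses adjusted compact elements $e_{p}$, and you phrase regularity via arbitrary open $U$ rather than the basic neighbourhoods $S(\tilde{x}|_{n})$).
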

\proof\hfill
\noindent
\begin{description}
\item[$(1 \Rightarrow 2)$]
Let $p \in \upa{\widetilde{X}}$ and $x = \rho_{S}(p)$.
Since $\{S(\tilde{x}|_{n}) : \nn\}$ is a neighbourhood base of $x$ in $X$
and  
$\{\upa{p|_{m}} \cap \upa{\widetilde{X}} : m \in \N\}$
is a neighbourhood base of $p$ in $\upa{\widetilde{X}}$,
it suffices to show that
for every $n$, there is $m$ such that
$\rho_{S}(\upa{p|_{m}} \cap \upa{\widetilde{X}}) \subseteq S(\tilde{x}|_{n})$.
Since $X$ is regular,
there is $m>n$ such that 
$x \in S(\tilde{x}|_{m}) \subseteq \cl{S(\tilde{x}|_{m})}
\subseteq S(\tilde{x}|_{n})$.
Then, for all $q \in \upa{\widetilde{X}}$
such that $q \sqsupseteq p|_{m}$, 
we have $\varphi_{S}(\rho_{S}(q)) \comp \tilde{x}|_{m}$
because $\varphi_{S}(\rho_{S}(q)) \sse q \sqsupseteq p|_{m}
\sqsupseteq \tilde{x}|_{m}$.
Thus, $\rho_{S}(q) \in \bar{S}(\tilde{x}|_{m})$. 
Therefore,
$\rho_{S}(q) \in \bar{S}(\tilde{x}|_{m})= \cl S(\tilde{x}|_{m})
\subseteq S(\tilde{x}|_{n})$.

\item[$(2 \Rightarrow 3)$]  Immediate.

\item[$(3 \Rightarrow 1)$]  Suppose that $x \in X$ and $\nn$.
For each $p \in \upa{\tilde{x}} \cap \2^{\omega}$, 
since $\rho'_{S}$ is continuous on $p$,
there exists $e_{p}  \in K_{p}$ such that
$\rho'_{S}(\upa{e_p} \cap \2^{\omega}) \subseteq S(\tilde{x}|_{n})$.
It means that $\bar{S}(e_{p}) \subseteq S(\tilde{x}|_{n})$.
Here, we can assume that $\tilde{x}|_{|e_{p}|}  \sqsubseteq e_{p}$
by replacing $e_{p}$ with
$e_{p} \sqcup \tilde{x}|_{|e_{p}|}$.
Note that $\upa{e_p} \cap \2^{\omega}$ for $p \in \upa{\tilde{x}}\cap \2^{\omega}$ 
is an open cover of $\upa{\tilde{x}} \cap \2^{\omega}$
and $\upa{\tilde{x}} \cap \2^{\omega}$ is compact
because it is homeomorphic to $\2^{j}$ for 
some 
$j \leq \omega$. Therefore, 
for some finite subset $\{p_{0},\ldots,p_{h-1}\}$ of $\upa{\tilde{x}} \cap \2^{\omega}$, 
we have $\cup_{i<h}\upa{e_{p_i}} \supseteq \upa{\tilde{x}} \cap \2^{\omega}$.
Let $m$ be the maximal length of $e_{p_{i}}$ for $i < h$
and let $l = m - |\dom(x|_{m})|$. 
Let $d_{0},\ldots, d_{2^{l}-1}$ be sequences of length $m$
obtained by filling the first $l$ bottoms of $\tilde{x}|_{m}$ with 0 and 1.
We have 
$(\upa{d_{0}} \cup \ldots \cup \upa{d_{2^{l}-1}}) \cap \2^{\omega}
= \upa{\tilde{x}|_{m}} \cap \2^{\omega}$.
Therefore, 
$\cup_{i < 2^{l}-1}\bar{S}(d_{i}) =  \bar{S}(\tilde{x}|_{m})$.
On the other hand,
for each $i <2^l$, there is $j < h$ such that
$d_{i} \sqsupseteq e_{p_{j}}$.
Therefore, 
$\bar{S}(d_{i}) \subseteq \bar{S}(e_{p_{j}})\subseteq S(\tilde{x}|_{n})$. 
Thus, we have 
$\bar{S}(\tilde{x}|_{m}) \subseteq S(\tilde{x}|_{n})$.
Since $\bar{S}(\tilde{x}|_{m}) = \cl{ S(\tilde{x}|_{m})}$,
it means that $X$ is a regular space.
\qed
\end{description}

\begin{cor}\label{cor-adm}
Suppose that $S$ is a proper dyadic subbase of a space $X$.
The triple
$(\T^\omega, \upa{\widetilde{X}}, \rho_S)$ is a domain representation
if and only if $X$  is regular.
In this case, it is an admissible retract domain representation.
\end{cor}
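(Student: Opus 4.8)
The plan is to establish the two directions of the equivalence by short arguments, leaning on Proposition~\ref{p:1234}, and then to read off admissibility from Proposition~\ref{admwtX}.

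For the ``only if'' direction I would argue directly. If $(\T^\omega, \upa{\widetilde{X}}, \rho_S)$ is a domain representation, then by definition $\rho_S$ is a quotient map onto $X$, in particular continuous, so $X$ is regular by the implication $(2\Rightarrow 1)$ of Proposition~\ref{p:1234}. (Alternatively one could invoke Blanck's Theorem~5.10 as in the discussion preceding Proposition~\ref{p:1234}, after observing that a quotient $\rho_S$ turns the triple into an upwards-closed retract Scott domain representation; the route through Proposition~\ref{p:1234} is the more immediate one.)

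For the ``if'' direction, assume $X$ is regular. Then $\rho_S\colon\upa{\widetilde{X}}\to X$ is continuous by $(1\Rightarrow 2)$ of Proposition~\ref{p:1234}. Since $\varphi_S$ is a topological embedding and $\widetilde{X}\subseteq\upa{\widetilde{X}}$ (as $\tilde{x}\sqsupseteq\tilde{x}$), the corestriction $\varphi_S\colon X\to\upa{\widetilde{X}}$ is continuous, and $\rho_S\circ\varphi_S=\mathrm{id}_X$ because $\bar{S}(\tilde{x})=\{x\}$. Thus $\rho_S$ admits a continuous section, so it is surjective and a quotient map: if $\rho_S^{-1}(U)$ is open then $U=\varphi_S^{-1}(\rho_S^{-1}(U))$ is open. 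Hence $(\T^\omega, \upa{\widetilde{X}}, \rho_S)$ is a retract domain representation. For admissibility, let $E$ be a domain, $E^{R}\subseteq E$ dense and $\nu\colon E^{R}\to X$ continuous; applying Proposition~\ref{admwtX} with $\mu=\nu$ yields a continuous $\phi\colon E\to\T^\omega$ with $\phi(x)=\varphi_S(\nu(x))$ for all $x\in E^{R}$, whence $\phi(x)\in\widetilde{X}\subseteq\upa{\widetilde{X}}=\dom(\rho_S)$ and $\rho_S(\phi(x))=\rho_S(\varphi_S(\nu(x)))=\nu(x)$. So $\phi$ witnesses admissibility.

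I do not expect a genuine obstacle: both substantial ingredients --- the equivalence of continuity of $\rho_S$ with regularity, and the admissibility of $(\T^\omega,\widetilde{X},\varphi_S^{-1})$ --- are already in hand. The one point to handle with a little care is verifying that the $\phi$ produced by Proposition~\ref{admwtX} lands in $\dom(\rho_S)=\upa{\widetilde{X}}$ and that it composes with $\rho_S$, rather than with $\varphi_S^{-1}$, to recover $\nu$; this follows at once from the explicit description $\phi(x)=\varphi_S(\nu(x))$ in the proof of Proposition~\ref{admwtX} together with $\rho_S\circ\varphi_S=\mathrm{id}_X$.
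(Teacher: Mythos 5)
Your proof is correct and follows essentially the same route as the paper: regularity is equivalent to continuity of $\rho_S$ via Proposition~\ref{p:1234}, $\varphi_S$ is a continuous section making $\rho_S$ a surjective quotient (hence a retraction), and admissibility is inherited from $(\T^\omega,\widetilde{X},\varphi_S^{-1})$. The only cosmetic difference is that the paper states the admissibility step as ``the identity on $\T^\omega$ is a reduction from the admissible representation $(\T^\omega,\widetilde{X},\varphi_S^{-1})$ to $(\T^\omega,\upa{\widetilde{X}},\rho_S)$,'' whereas you unfold that reduction by explicitly composing the map $\phi$ from Proposition~\ref{admwtX} with the inclusion $\widetilde{X}\subseteq\upa{\widetilde{X}}$ and using $\rho_S\circ\varphi_S=\mathrm{id}_X$.
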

\proof
Suppose that $X$ is regular.
From Proposition \ref{p:1234}, $\rho_{S}$ is a retraction with 
right inverse $\varphi_{S}$.  Therefore, $\rho_{S}$ is a quotient map.
Since $\widetilde{X} \subseteq \upa{\widetilde{X}}$
and $\varphi_{S}^{-1}$ is
a restriction of $\rho_{S}$ to $\widetilde{X}$,
the identity map on $\T^{\omega}$ 
is a reduction map from the admissible domain representation
$(\T^\omega, \widetilde{X}, \varphi_S^{-1})$
to $(\T^\omega, \upa{\widetilde{X}} , \rho_S)$.
\qed

\section{Domains $D_{S}$ and $\wh{D}_{S}$}

In the previous section, we studied domain representations in the domain $\T^{\omega}$.
In the following sections, we study domain representations in  subdomains 
$D_{S}$ and $\wh D_{S}$ of $\T^{\omega}$. 
Before that, we consider the domain $E_{S}$ which is defined as the closure
of $\widetilde{X}$ in $\T^{\omega}$. 
It is easy to show that the triple
$(E_{S}, \widetilde{X}, \varphi_{S}^{-1})$ is a dense domain representation of $X$
and, if  in addition $S$ is proper and $X$ is regular,  then
$(E_{S}, \upa{\widetilde{X}}, \rho_{S})$ is a dense admissible retract domain representation of $X$.
In these domain representations, 
we have $S(e) \ne \emptyset$ for every $e \in K(E_{S})$
and the family $\{S(e) : e \in K_{p}\}$ forms a filter base for every $p \in L(E_{S})$.
In this sense, one can say that $E_{S}$ does not contain superfluous elements.
However, $E_{S}$ is identical to $\T^{\omega}$ if $S$ is an independent subbase
and the domain $E_{S}$ does not have information about $X$ in this case.
We consider further restrictions of $\T^{\omega}$ and define the domains $D_{S}$
and $\wh D_{S}$ as follows.

\begin{defi}
Let $S$ be a dyadic subbase of a space $X$.  
\begin{enumerate}
\item We define the poset $K_S \subseteq \T^*$ as
\[
K_S = \{p|_{m} : p \in \widetilde{X} , m \in \N \}
\]
and define $D_{S} = \Idl(K_{S})$.

\item We define the poset $\wh{K}_S \subseteq \T^*$ as
\[
\wh{K}_S = \{p|_{m} : p \in \upa{\widetilde{X}}, m \in \N\}
\]
and define $\wh D_{S} = \Idl(\wh K_{S})$.
\end{enumerate}

\end{defi}
For the Gray-subbase $G$ of $\I$, we have $\bot 1  \in K_{G}$
because $\varphi_{G}(1/2) = \bot 1 0^\omega$,
but $\bot 0 \not \in K_{G}$ and $\bot \bot 1 \not \in K_{G}$.
Figure \ref{fig3} shows the structure of $D_G = \wh{D}_{G}$.

We have $K_{S} \subseteq \wh{K}_{S} \subseteq \T^*$
and $D_{S} \subseteq \wh{D}_{S} \subseteq \T^{\omega}$ for a dyadic subbase $S$.
We also have $\tilde X \subseteq D_{S}$ and $\upa {\tilde X} \subseteq \wh{D}_{S}$.

\begin{figure}[t]
\begin{center}

\resizebox{12cm}{6cm}{\includegraphics{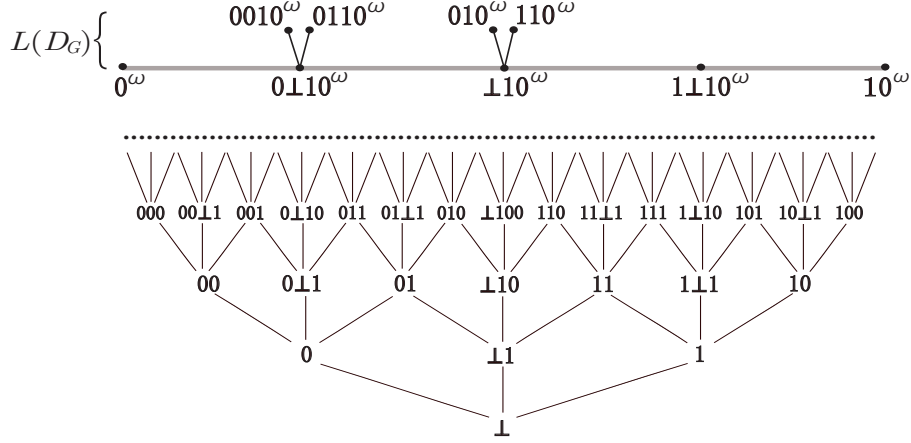}}

\caption{The domain $D_{G}$. \label{fig3}}
\end{center}
\end{figure}

\begin{prop}\label{p:density}\hfill
\begin{enumerate}
\item If $S$ is a dyadic subbase of a space $X$, then $\widetilde{X} $ is dense in $D_{S}$.

\item If $S$ is a proper dyadic subbase of a space $X$, then $\widetilde{X} $ is dense in 
$\wh{D}_{S}$.
\end{enumerate}
\end{prop}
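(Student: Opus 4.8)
The plan is to prove both parts by a uniform argument: to show $\widetilde{X}$ is dense in a domain $\Idl(P)$ it suffices, by the definition of the Scott topology, to show that every basic open set $\upa{e}$ with $e \in K(\Idl(P)) \cong P$ that is nonempty contains a point of $\widetilde{X}$; equivalently, that for every $e \in P$ there is $x \in X$ with $e \sse \tilde{x}$. So the task reduces to: for part (1), every $e \in K_S$ satisfies $e \sse \tilde{x}$ for some $x \in X$; for part (2), every $e \in \wh{K}_S$ satisfies $e \sse \tilde{x}$ for some $x \in X$.

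Part (1) is essentially immediate from the definition of $K_S$. If $e \in K_S$, then by definition $e = p|_m$ for some $p \in \widetilde{X}$ and some $m \in \N$; write $p = \tilde{x}$. Then $e = \tilde{x}|_m \sse \tilde{x}$, so $\tilde{x} \in \upa{e} \cap \widetilde{X}$, which is therefore nonempty. Hence every nonempty basic Scott-open set of $D_S$ meets $\widetilde{X}$, i.e.\ $\widetilde{X}$ is dense in $D_S$. (Here I use that $K(\Idl(K_S)) \cong K_S$, stated in the Domain Theory preliminaries, and that $\widetilde{X} \subseteq D_S$, noted just before the proposition.)

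Part (2) is the one requiring real work, because an element $e \in \wh{K}_S$ has the form $e = p|_m$ with $p \in \upa{\widetilde{X}}$, so $e \sse p$ but we only know $p \spe \tilde{x}$ for some $x$; there is no reason $e \sse \tilde{x}$. Instead I would argue that $\upa{e}$ still contains a point of $\widetilde{X}$ by using properness. Since $p \in \upa{\widetilde{X}}$, pick $x \in X$ with $\tilde{x} \sse p$; then $x \in \bar{S}(p) \subseteq \bar{S}(e)$ because $e \sse p$, so $\bar{S}(e) \neq \emptyset$, and by properness $\cl S(e) = \bar{S}(e) \neq \emptyset$, hence $S(e) \neq \emptyset$. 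Take any $y \in S(e)$; then by \eqref{eq:s} we have $e \sse \tilde{y}$, so $\tilde{y} \in \upa{e} \cap \widetilde{X}$. Thus every nonempty basic Scott-open set $\upa{e}$ of $\wh{D}_S$ meets $\widetilde{X}$, and $\widetilde{X}$ is dense in $\wh{D}_S$.

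The main obstacle is the gap in part (2): the naive attempt to produce a point of $\widetilde{X}$ below a given compact element fails, and one must instead go through $\bar{S}(e)$ and invoke properness to pass from $\bar{S}(e) \neq \emptyset$ to $S(e) \neq \emptyset$, then use that $S(e)$ nonempty forces a point of $X$ sitting above $e$. Once one sees that properness is exactly the tool that bridges "closure nonempty" and "interior nonempty", the rest is routine. One should also note that density is a topological (Scott-topology) statement, so it suffices to check basic opens of the form $\upa{e}$; this is why the argument is finitary and uses only compact elements.
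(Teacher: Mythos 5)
Your proposal is correct and follows essentially the same route as the paper: part (1) is immediate since $e=\tilde{x}|_m$ gives $x\in S(e)$, and part (2) uses properness to pass from $\bar{S}(e)\neq\emptyset$ to $S(e)=\cl S(e)\cap{}$(interior considerations)${}\neq\emptyset$, which is exactly the content of Proposition \ref{p:proper}(1) that the paper cites at this point. The only difference is that you re-derive that step inline rather than quoting the earlier proposition.
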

\proof\hfill
\begin{enumerate}
\item 
$\widetilde{X}$
is dense in $D_{S}$ because $S(e)$ is not empty for every $e \in K_{S}$.  

\item By Proposition \ref{p:proper}(1), $S(e)$ is not empty for every  $e \in \wh{K}_{S}$. \qed
\end{enumerate}


The domains $D_{S}$ and $\wh{D}_S$ are not equal in general
as the following example shows.

\begin{exa}\label{ex:height}
Let $W$ be the space obtained by glueing four copies of $\I$ at 
one of the endpoints.
That is, $W = \2 \times \2 \times \I /\!\! \sim$ for $\sim$ the equivalence
relation identifying $(a, b, 0)$ for $a, b \in \2$.  Let $z$ be the identified point.
That is, $z = [(a, b, 0)]$ for $a, b \in \2$.
Let $ R$ be the dyadic subbase defined as 
\begin{align*}
 R_{0,c} &= \{c\} \times \2 \times (0,1] / \sim, \\
 R_{1,c} &= \2 \times \{c\} \times (0,1]  / \sim, \\
 R_{n+2,c} &= \2 \times \2 \times G_{n,c} / \sim,
\end{align*}
for $\nn$ and $c \in \2$.
We have $\tilde z = \bot \bot 0^{\omega} \in L(D_{R})$ and
$a b  0^{\omega} \in L(D_{R})$ for $a, b \in \2$.
However,  $ a \bot 0^{\omega} \not \in L(D_{R})$ for $\a2$
and $ \bot b 0^{\omega} \not \in L(D_{R})$ for $b \in \2$.
On the other hand, we have 
$ a \bot 0^{\omega} \in L(\wh D_{R})$ for $\a2$
and $ \bot b 0^{\omega} \in L(\wh D_{R})$ for $b \in \2$.
\end{exa}

\begin{prop}\label{p:lub}
If $S$ is a dyadic subbase of a space $X$, then 
$\wh{K}_S$ is a cusl and therefore
$\wh{D}_S$ is a bounded complete domain.
\end{prop}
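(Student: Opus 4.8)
The plan is to verify the two defining conditions of a cusl directly for the poset $\wh{K}_S$, which is a sub-poset of $\T^{*}$ and hence countable; the assertion that $\wh{D}_S = \Idl(\wh{K}_S)$ is a bounded complete domain then follows from the recalled fact that the ideal completion of a countable cusl is a bounded complete domain. The least element costs nothing: for any $p \in \upa{\widetilde{X}}$ the empty bottomed sequence equals $p|_0$, so it lies in $\wh{K}_S$, and it is $\sse$-below every element of $\T^{*} \supseteq \wh{K}_S$.

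The substance of the proof is to show that any two compatible elements $d, e \in \wh{K}_S$ have a least upper bound inside $\wh{K}_S$. From compatibility there is $f \in \wh{K}_S$ with $d, e \sse f$; since $\T^{\omega}$ is bounded complete and $f$ is finite, $g := d \lub e$ exists in $\T^{\omega}$ and, being $\sse f$, lies in $\T^{*} = K(\T^{\omega})$. It then suffices to prove $g \in \wh{K}_S$, because $g$ --- already the least upper bound of $\{d,e\}$ in $\T^{\omega}$ --- is a fortiori their least upper bound in the sub-poset $\wh{K}_S$. To locate $g$ in $\wh{K}_S$ I would assume $|d| \geq |e|$ without loss of generality, so that $\dom(g) = \dom(d) \cup \dom(e) \subseteq \{0,\ldots,|d|-1\}$ and hence $|g| = |d|$ (the case $|d|=0$ being trivial). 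Writing $d = p|_m$ with $p \in \upa{\widetilde{X}}$, we have $m \geq |d|$ and therefore $d(k) = p(k)$ for all $k < |d|$. Choosing $x \in X$ with $\tilde{x} \sse p$, define $s \in \T^{\omega}$ by $s(k) = g(k)$ for $k < |g|$ and $s(k) = \tilde{x}(k)$ for $k \geq |g|$; then $s|_{|g|} = g$ by construction, so $g \in \wh{K}_S$ will follow once we know $s \in \upa{\widetilde{X}}$, i.e.\ $s \spe \tilde{x}$.

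Checking $s \spe \tilde{x}$ is the one point I expect to require care, and it is done coordinatewise. For $k \geq |g|$ it is immediate. For $k < |g|$ with $d(k) \neq \bot$ one has $g(k) = d(k) = p(k) \spe \tilde{x}(k)$. For $k < |g|$ with $d(k) = \bot$ one has $p(k) = d(k) = \bot$, hence $\tilde{x}(k) = \bot \sse g(k) = s(k)$. The crux is this last case: on the positions below $|d|$ where $d$, hence $g$, is undefined, the truncation $d = p|_m$ forces $p$ --- and a fortiori $\tilde{x}$ --- to be undefined there too, so that padding $g$ with the tail of $\tilde{x}$ produces a genuine member of $\upa{\widetilde{X}}$ whose $|g|$-truncation is $g$. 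Once $g \in \wh{K}_S$ is established, both cusl axioms hold and the proposition follows. (Note that nothing beyond $d,e \in \wh{K}_S$ being compatible and $d \in \wh{K}_S$ is used; properness of $S$ is not needed here.)
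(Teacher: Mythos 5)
Your proof is correct and takes essentially the same route as the paper: both verify the cusl condition by showing that the least upper bound of $d$ and $e$ computed in $\T^{*}$ already lies in $\wh{K}_S$, by exhibiting an element of $\upa{\widetilde{X}}$ whose truncation equals it. The paper obtains its witness slightly more directly as $(d \lub q)|_{n}$ where $e = q|_{n}$ with $n$ the larger truncation index (so no coordinatewise check is needed, since $d \lub q \spe q \spe \tilde{x}$ is automatic), whereas you pad $g = d \lub e$ with the tail of $\tilde{x} \sse p$ for $d = p|_{m}$ and verify $s \spe \tilde{x}$ by cases; both constructions are valid.
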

\proof
Let  $d = p |_{m}, e = q |_{n}  \in \wh{K}_{S}$
for $p, q \in \upa{\widetilde{X}} $ and $m \leq n \in \N$. 
Suppose that  $d \comp e$ in $\wh{K}_{S}$
and let $f = d \lub e$ be their least upper bound in $\T^*$.
Then, since $d$ and $q|_{n}$ are compatible and
$|d| \leq n$, $d \comp q$ in $\T^{\omega}$ and
$r = d \lub q$ satisfies $r|_{n} = f$.
Since $r  \sqsupseteq q \sqsupseteq \tilde{x}$
for some $x \in X$,
we have $f \in \wh{K}_{S}$.
\qed

\begin{prop}\hfill\label{p:admDS}
\begin{itemize}
\item If $S$ is a dyadic subbase of a space $X$,  then
the domain representation $(\wh{D}_S, \widetilde{X}, \varphi_{S}^{-1})$ is
admissible.

\item If $S$ is a proper dyadic subbase of a regular space $X$,
then the domain representation 
$(\wh{D}_S, \upa{\widetilde{X}}, \rho_{S})$ is admissible.
\end{itemize}
\end{prop}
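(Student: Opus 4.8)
The plan is to reuse the argument for Proposition~\ref{admwtX}, but carried out so that the witnessing map lands in the smaller domain $\wh{D}_S$ rather than in all of $\T^\omega$. Recall that for a domain $E$, a dense subset $E^R \subseteq E$, and a continuous $\mu : E^R \to X$, the proof of Proposition~\ref{admwtX} builds a monotone map $\phi$ on $K(E)$ by setting $\phi(e)(n) = a$ iff $\mu(\upa e \cap E^R) \subseteq S_{n,a}$, extends it continuously to $E$, and checks that $\varphi_S^{-1}(\phi(p)) = \mu(p)$ for $p \in E^R$, i.e.\ that $\phi(p) = \widetilde{\mu(p)} \in \widetilde{X}$. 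The only new thing to verify here is that the continuous extension $\phi : E \to \T^\omega$ actually factors through $\wh{D}_S$, i.e.\ $\phi(E) \subseteq \wh{D}_S$. Since $\wh{D}_S = \Idl(\wh{K}_S)$, this amounts to showing $\phi(e) \in \wh{D}_S$ for every $e \in K(E)$ — equivalently $\phi(e)|_m \in \wh{K}_S$ for every $m$ — because $\wh{D}_S$ is Scott-closed-below is not the point; rather, $\phi(p) = \bigsqcup_{e \in K_p}\phi(e)$ is a directed sup of elements of $\wh{D}_S$, and $\wh{D}_S$ is a sub-dcpo of $\T^\omega$ (it is the ideal completion of the sub-poset $\wh{K}_S$), so it suffices that each $\phi(e) \in \wh{D}_S$.

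So fix $e \in K(E)$ and $m \in \N$; I must show $\phi(e)|_m \in \wh{K}_S$. If $\upa e \cap E^R = \emptyset$ this is vacuous since then $\phi(e) = \bot^\omega$ (no $n$ satisfies the defining condition non-trivially — more precisely the empty set is contained in every $S_{n,a}$, but also $S_{n,\bot}$, so we must tie-break; actually in the construction $\phi(e)(n)$ is undefined unless exactly one $a$ works, and for $\upa e \cap E^R = \emptyset$ one takes $\phi(e) = \bot^\omega \in \wh K_S$). Otherwise pick any $y \in \upa e \cap E^R$ and set $x = \mu(y) \in X$. For each $n < m$ with $\phi(e)(n) = a \in \2$ we have $\mu(\upa e \cap E^R) \subseteq S_{n,a}$, hence $x \in S_{n,a}$, hence $\widetilde{x}(n) = a = \phi(e)(n)$; and where $\phi(e)(n) = \bot$ we trivially have $\widetilde x(n) \sqsupseteq \phi(e)(n)$. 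Thus $\phi(e)|_m \sqsubseteq \widetilde{x}$, so $\phi(e)|_m = \widetilde{x}|_m \sqcup (\text{something below it})$; in fact $\phi(e)|_m \sqsubseteq \widetilde x |_m$. Since $\widetilde x \in \widetilde X \subseteq \upa{\widetilde X}$ and any $p \sqsubseteq \widetilde x$ in $\T^\omega$ is $\sqsubseteq$ some element of $\upa{\widetilde X}$ — here simply $\phi(e)|_m \sqsubseteq \widetilde x |_m \sqsubseteq \widetilde x \in \upa{\widetilde X}$ — we need $\phi(e)|_m$ itself to be of the form $r|_k$ for $r \in \upa{\widetilde X}$: take $r = \widetilde x \sqcup \phi(e)|_m$. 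Since $\phi(e)|_m \sqsubseteq \widetilde x$, this sup is just $\widetilde x \in \widetilde X \subseteq \upa{\widetilde X}$, and $\phi(e)|_m = (\widetilde x)|_{m'} $ where — hmm, not quite, because $\phi(e)|_m$ may be strictly below $\widetilde x|_m$ on the first $m$ coordinates. The clean fix: $\phi(e)|_m \sqsubseteq \widetilde x$, and every element of $\T^*$ below some $\widetilde x \in \widetilde X$ lies in $\wh K_S$, because $\wh K_S = \{ q|_k : q \in \upa{\widetilde X}\}$ is downward closed in $\T^*$ below each such $q$ — indeed if $d \sqsubseteq q$ with $|d| \le k$ then $d = (q \sqcup d)|_k$ and $q \sqcup d = q \in \upa{\widetilde X}$. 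Applying this with $q = \widetilde x$, $k = m$, $d = \phi(e)|_m$ gives $\phi(e)|_m \in \wh K_S$, as desired. This establishes the first bullet.

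For the second bullet, apply the first bullet to obtain, for any $(E, E^R, \nu)$ with $E^R$ dense and $\nu : E^R \to X$ continuous, a continuous $\phi : E \to \wh{D}_S$ with $\varphi_S^{-1}(\phi(x)) = \nu(x)$, i.e.\ $\phi(x) = \widetilde{\nu(x)} \in \widetilde X$ for $x \in E^R$. Since $\widetilde X \subseteq \upa{\widetilde X}$ and $\rho_S$ restricted to $\widetilde X$ equals $\varphi_S^{-1}$ (because $\bar S(\widetilde x) = \{x\}$ by Proposition~\ref{p:proper}(3)), we get $\rho_S(\phi(x)) = \varphi_S^{-1}(\phi(x)) = \nu(x)$ for all $x \in E^R$, so the same $\phi$ witnesses admissibility of $(\wh D_S, \upa{\widetilde X}, \rho_S)$. (Regularity of $X$ is used only to know, via Corollary~\ref{cor-adm} together with $\wh D_S \subseteq \T^\omega$, that $(\wh D_S, \upa{\widetilde X}, \rho_S)$ is a genuine domain representation in the first place, i.e.\ that $\rho_S$ is a well-defined quotient map on this subdomain.)

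The main obstacle, and the only non-formal point, is the factorization $\phi(E) \subseteq \wh{D}_S$: one must see that the Scott-continuous extension of a monotone map defined on $K(E)$ does not escape the subdomain, which reduces — since $\wh D_S$ is a sub-dcpo of $\T^\omega$ closed under the relevant directed sups — to the pointwise claim that each $\phi(e)$ for $e \in K(E)$ lies in $\wh D_S$, and that in turn is the downward-closure observation about $\wh K_S$ below points of $\widetilde X$ combined with $\phi(e) \sqsubseteq \widetilde{\mu(y)}$ for any $y \in \upa e \cap E^R$. Everything else is a transcription of the proof of Proposition~\ref{admwtX}.
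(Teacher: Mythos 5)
Your strategy---rerun the construction of Proposition~\ref{admwtX} and argue that the resulting $\phi$ already lands in $\wh{D}_S$---breaks down at exactly the point you flagged and then patched with a false lemma. You claim that every $d\in\T^*$ with $d\sse\tilde{x}$ for some $x\in X$ lies in $\wh{K}_S$, justified by ``if $d\sse q$ with $|d|\le k$ then $d=(q\lub d)|_k$''. But $d\sse q$ gives $q\lub d=q$, and $q|_k$ agrees with $q$ on \emph{every} coordinate below $k$, so $(q\lub d)|_k=q|_k$, which is in general strictly above $d$. The correct characterization is that $d\in\wh{K}_S$ iff $\tilde{y}|_{|d|}\sse d$ for some $y\in X$ --- the \emph{opposite} inequality to the one you establish for $\phi(e)|_m$. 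Concretely, for the Gray subbase $G$ of $\I$ we have $\bot 0\sse\varphi_G(x)$ for any $x\in[0,1/4)$, yet $\bot 0\notin K_G=\wh{K}_G$ (the paper notes this right after defining $D_S$). The bad case genuinely occurs for your $\phi$: take $E=\T$, $E^R=\{0,1\}$ (dense), $\mu(0)=1/10$, $\mu(1)=9/10$. Then $\mu(\upa{\bot}\cap E^R)=\{1/10,9/10\}$ is contained in $G_{1,0}=[0,1/4)\cup(3/4,1]$ but in neither $G_{0,0}$ nor $G_{0,1}$, so $\phi(\bot)|_2=\bot 0\notin\wh{K}_G$ and hence, by Lemma~\ref{lemmale}(2), $\phi(\bot)\notin\wh{D}_G$. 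So the factorization $\phi(E)\subseteq\wh{D}_S$, which you correctly identify as the one non-formal point, simply fails; the second bullet inherits the same gap since it reuses the same $\phi$.

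The repair is not to hope that $\phi$ stays inside $\wh{D}_S$ but to project it there afterwards, which is what the paper does: define $\pi:\T^{\omega}\to\wh{D}_S$ by $\pi(p)=\lub\{e\in\wh{K}_S:e\sse p\}$. This is well defined because any two elements of $\wh{K}_S$ below $p$ are compatible in $\T^*$ and hence have their lub in $\wh{K}_S$ (the proof of Proposition~\ref{p:lub}), so the set is an ideal of $\wh{K}_S$; the map $\pi$ is continuous and is the identity on $\widetilde{X}$ and on $\upa{\widetilde{X}}$ because $\tilde{x}|_n\in\wh{K}_S$ for all $n$. Thus $\pi$ is a reduction from the admissible representation $(\T^{\omega},\widetilde{X},\varphi_S^{-1})$ of Proposition~\ref{admwtX} (respectively from $(\T^{\omega},\upa{\widetilde{X}},\rho_S)$, whose admissibility for regular $X$ is Corollary~\ref{cor-adm}) to the corresponding representation in $\wh{D}_S$, and admissibility transfers along such reductions. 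Note that this is where bounded completeness of $\wh{D}_S$ earns its keep, and it is also why the analogous statement for $D_S$ is left open in the paper: your argument, if it worked, would not have used bounded completeness at all and would apply verbatim to $D_S$.
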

\proof\hfill
\begin{enumerate}
\item We show that there is a reduction
from the admissible domain representation
$(\T^{\omega},  \widetilde{X}, \varphi_{S}^{-1})$ to 
$(\wh{D}_S, \widetilde{X}, \varphi_{S}^{-1})$.
Since $\wh{D}_S$ is bounded complete,
we define $\phi : \T^{\omega} \to \wh{D}_S$ as
$\phi(p) =  \lub\{e \in \wh{K}_S: e \sse p\}$.
It preserves $\widetilde X$
because $\{S(e) : e \in \wh{K}_S, e \sse \tilde x\}$
contains $S(\tilde x|_n)$ for every $n$.

\item  The map $\phi$ preserves $\upa{\widetilde{X}}$
and it is a reduction also from
$(\T^{\omega},  \upa{\widetilde{X}}, \rho_{S})$ to 
$(\wh{D}_S, \upa{\widetilde{X}}, \rho_{S})$. 
\qed
\end{enumerate}

As Example \ref{e:2} shows, ${D}_S$ is not bounded complete in general.
It is left open whether the results corresponding to Proposition \ref{p:admDS} 
 hold for $D_{S}$.

\begin{exa}\label{e:2}
Let $Y$ be the space obtained by glueing $1/4$ and $3/4$ in $\I$.
That is,
$Y$ is the quotient space of $\I$
with the equivalence relation generated by $1/4 \sim 3/4$.
Let $T$ be the independent subbase of $Y$ such that 
$T_{0,0} = (G_{0,0} \setminus \{1/4\})/\!\sim$,
$T_{0,1} = (G_{0,1} \setminus \{3/4\})/\!\sim$, 
and $T_{n,a} = G_{n,a}/\!\sim$ for $n > 0$.
We have $\varphi_{T}(z) = \bot \bot 1 0^{\omega}$ 
for $z = [1/4] = [3/4]$ 
and $\varphi_{T}([x]) = \varphi_{G}(x)$
for $x \not \in \{1/4, 3/4\}$.
Therefore, 
$K_{S}$ contains $\bot \bot 1$ and $\bot 1 (= \varphi_{T}([1/2])|_{2})$,
which are bounded above by $0 1 1 = \varphi_{T}([1/3])|_{3}$
and $1 1 1 = \varphi_{T}([2/3])|_{3}$.
However, $\bot 1 1 $,
which is the least upper bound of $\bot\bot 1$ and $\bot 1$
in $\T^{\omega}$,
does not belong to $K_{T}$.
Therefore, $K_{T}$ is not a cusl and $D_{T}$ is not a bounded complete domain.
Note that the poset $\wh{K}_{T}$ contains $\bot11$
because
$\bot 1 1 0^{\omega} \sqsupseteq  \varphi_{T}(z)$.
\end{exa}

In Example \ref{e:2}, $\upa{\varphi_{T}(z)}$ in $D_{T}$
is the set 
$\{\bot \bot p, 0 0  p,  0 1  p,  1 0  p,  1 1  p,  1 \bot p, 0 \bot p\}$
for $p = 1 0^{\omega}$.
Therefore, it is different from 
$\upa{\varphi}_{T}(z)$ in $\T^{\omega}$
which contains also $\bot 0 p$ and
$\bot 1 p$.  
As Example \ref{e:2} and \ref{ex:height} show, 
$L(D_{S}) \subsetneq L(\wh{D}_{S})$ in general.
However, for a proper dyadic subbase $S$, 
$D_{S}$ and $\wh{D}_{S}$ coincide on the top elements 
as Proposition \ref{l:01} shows.

 \begin{lem} \label{lemmale}
Let $S$ be a dyadic subbase of a space $X$.
\begin{enumerate}
\item For $p \in D_S$ and $\nn$, we have $p|_{n} \in K_S$. 
\item For $p \in \wh{D}_S$ and $\nn$, we have $p|_{n} \in \wh{K}_S$. 
\end{enumerate}
 \end{lem}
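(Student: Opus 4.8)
The plan is to prove part (1) directly; part (2) will then follow by running the identical argument with $\upa{\widetilde{X}}$, $\wh{K}_{S}$, $\wh{D}_{S}$ in place of $\widetilde{X}$, $K_{S}$, $D_{S}$, since the proof uses no property of $\widetilde{X}$ beyond the syntactic shape of the definition $K_{S}=\{q|_{m}:q\in\widetilde{X},\,m\in\N\}$.

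For (1), fix $p\in D_{S}$ and $n\in\N$. Identifying an ideal of $K_{S}$ with its least upper bound in $\T^{\omega}$, the element $p$ is the lub of the directed ideal $I=\{e\in K_{S}:e\sse p\}$. The first step is to extract a single $e\in I$ that already determines $p$ on the coordinates $0,\dots,n-1$. Set $J=\dom(p)\cap\{0,\dots,n-1\}$, a finite set; for each $k\in J$ we have $p(k)\in\2$, and since lubs in $\T^{\omega}$ are taken coordinatewise there is $d_{k}\in I$ with $d_{k}(k)=p(k)$. Let $e\in I$ be an upper bound of the finite family $\{d_{k}:k\in J\}$ (any element of $I$ if $J=\emptyset$). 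Because elements of $\2$ are maximal in $\T$, from $e\spe d_{k}$ I get $e(k)=p(k)$ for all $k\in J$, and from $e\sse p$ I get $\dom(e)\subseteq\dom(p)$ with $e$ agreeing with $p$ throughout $\dom(e)$. Together these give $\dom(e)\cap\{0,\dots,n-1\}=J$, so the truncation $e|_{n}$ has domain $\dom(p|_{n})$ and agrees with $p$ there; that is, $e|_{n}=p|_{n}$.

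The second step is to recognise $e|_{n}$ as a member of $K_{S}$. Since $e\in K_{S}$, write $e=q|_{m}$ with $q\in\widetilde{X}$ and $m\in\N$. A short computation of domains shows $(q|_{m})|_{n}=q|_{\min(m,n)}$, which lies in $K_{S}$ by definition; hence $p|_{n}=e|_{n}=q|_{\min(m,n)}\in K_{S}$, as required.

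The one point that will need care is the first step: I must check that the $e\in I$ chosen to pin down the digits of $p$ on $J$ acquires no defined coordinate below $n$ outside $J$, so that $e|_{n}$ is literally $p|_{n}$ and not merely some element above it. This is precisely where the hypothesis $p\in D_{S}$ (as opposed to $p\in\T^{\omega}$) enters, via the directedness of $I$ together with $\lub I=p$; the hypothesis is genuinely needed, since $K_{S}$ need not be downwards closed in $\T^{*}$.
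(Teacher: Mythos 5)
Your proof is correct and takes essentially the same route as the paper's: the paper truncates each generator $\tilde{x_i}|_{m_i}$ of the ideal at $n$ and notes that the resulting directed set consists of elements of length at most $n$, hence is finite and has a greatest element, which must equal $p|_{n}$ and lie in $K_{S}$; your version makes the same idea explicit by extracting a single $e$ in the ideal with $e|_{n}=p|_{n}$ and truncating it. As in the paper, part (2) is the identical argument with $\wh{K}_{S}$ in place of $K_{S}$.
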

 \proof\hfill
\begin{enumerate}
\item  Suppose that $p$ is the least upper bound of an ideal
$\{\tilde{x_{i}}|_{m_i}  :  i \in I\}$.  Then, $p|_{n}$ is the least upper bound
of the ideal $\{\tilde{x_{i}}|_{m_i}|_{n}  :  i \in I\} = 
\{\tilde{x_{i}}|_{\min\{m_{i}, n\}}  :  i \in I\}$, whose length is no more than $n$.
\item It is proved similarly to (1).
 \qed
\end{enumerate}

\begin{prop}\label{l:01}
For a proper dyadic subbase $S$ of a space $X$,
$D_{S} \cap \2^{\omega} = \wh{D}_{S} \cap \2^{\omega}$.   
\end{prop}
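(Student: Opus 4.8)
The inclusion $D_S \cap \2^\omega \subseteq \wh D_S \cap \2^\omega$ is immediate from $D_S \subseteq \wh D_S$, so the plan is to prove the reverse inclusion. Take $p \in \wh D_S \cap \2^\omega$; since $p$ is a maximal element of $\T^\omega$, we have $p \in \2^\omega$, and by Lemma~\ref{lemmale}(2) each truncation $p|_n$ lies in $\wh K_S$. I want to show $p|_n \in K_S$ for every $n$, which gives that $\{p|_n : n \in \N\}$ is an ideal of $K_S$ whose least upper bound is $p$, hence $p \in D_S$. So the whole problem reduces to the claim: \emph{for a proper dyadic subbase $S$ and $p \in \2^\omega$, if $p|_n \in \wh K_S$ for all $n$, then $p|_n \in K_S$ for all $n$.}

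To establish this claim I would use properness to locate an actual point of $X$ below $p$. Since $p|_n \in \wh K_S$ means $p|_n \sse q$ for some $q \in \upa{\widetilde X}$, i.e.\ $p|_n \comp \tilde x$ for some $x \in X$, we get $x \in \bar S(p|_n)$ by \eqref{eq:cs}, so $\bar S(p|_n) \neq \emptyset$ for every $n$. Now I would invoke Proposition~\ref{p:proper}(4): since $p \in \2^\omega$, the set $\bar S(p) = \bigcap_n \bar S(p|_n)$ is either empty or a singleton $\{x\}$. The key subpoint is that it is \emph{not} empty: the sets $\bar S(p|_n) = \cl S(p|_n)$ form a decreasing chain of nonempty closed sets, and — using that $X$ is (separable) metrizable and that, by properness together with Proposition~\ref{p:proper}(3) applied to points sitting above $p|_n$, these closures shrink appropriately — their intersection is nonempty. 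Concretely, pick $x_n \in \bar S(p|_n)$; by properness $x_n \in \cl S(p|_n)$, so I can perturb to $y_n \in S(p|_n)$ with $\tilde y_n|_n = p|_n$, i.e.\ $p|_n \sse \tilde y_n$; then $p|_n \in K_S$ directly from the definition $K_S = \{r|_m : r \in \widetilde X, m \in \N\}$, taking $r = \tilde y_n$ and $m = n$ (noting $|p|_n| \le n$ and $\tilde y_n|_n = p|_n$).

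The main obstacle is the perturbation step: from $x \in \cl S(p|_n)$ I need a point $y$ \emph{in} $S(p|_n)$ whose embedding agrees with $p$ on the first $n$ coordinates, i.e.\ $\tilde y(k) = p(k)$ for $k < n$. Membership $y \in S(p|_n)$ already gives $\tilde y(k) = p(k)$ for $k \in \dom(p|_n)$ by \eqref{eq:s}, and since $p \in \2^\omega$ we have $\dom(p|_n) = \{0,\dots,n-1\}$, so this is automatic — the only real content is that $S(p|_n) \ne \emptyset$, which is exactly properness giving $\cl S(p|_n) = \bar S(p|_n) \ne \emptyset$ hence $S(p|_n) \ne \emptyset$ (a nonempty closure of an open set forces the open set nonempty). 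Thus $p|_n \in K_S$ for all $n$, the chain $\{p|_n\}$ is an ideal of $K_S$ with lub $p$, and $p \in D_S$, completing the proof. I would also remark that properness is essential here, paralleling the role it plays in Proposition~\ref{p:proper}, and that without it $S(p|_n)$ could be empty while $\bar S(p|_n)$ is not, breaking the argument.
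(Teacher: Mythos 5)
Your proof is correct and takes essentially the same route as the paper: Lemma \ref{lemmale}(2) gives $p|_{n} \in \wh{K}_{S}$, properness gives $S(p|_{n}) = \emptyset$ only if $\bar{S}(p|_{n}) = \cl S(p|_{n}) = \emptyset$, hence $S(p|_{n}) \neq \emptyset$, and any $y \in S(p|_{n})$ satisfies $\tilde{y}|_{n} = p|_{n}$ because $p \in \2^{\omega}$ makes $\dom(p|_{n})$ all of $\{0,\dots,n-1\}$, so $p|_{n} \in K_{S}$. The detour through Proposition \ref{p:proper}(4) and the nonemptiness of $\bigcap_{n}\bar{S}(p|_{n})$ (which need not hold without compactness) is unnecessary, as you yourself observe when you isolate the real content of the perturbation step.
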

\proof
Let $p \in \wh{D}_{S} \cap \2^{\omega}$ and $n \in \N$.
By Lemma \ref{lemmale}, we have $p|_{n} \in \wh{K}_{S}$.
Therefore, $S(p|_{n}) \ne \emptyset$ by Proposition \ref{p:density}(2).
For every $y \in S(p|_{n})$, 
we have $p|_{n} = \tilde y|_{n}$ since $p \in \2^{\omega}$. 
Thus,  $p|_{n} \in K_{S}$ for every $\nn$ and 
we have $p \in D_{S}$.

\qed

\section{Domains with minimal-limit sets}

We study structures of $D_{S}$ and $\wh{D}_{S}$ and present a condition
on $X$ which ensures the existence of minimal elements of 
$L(D_{S})$ and $L(\wh{D}_{S})$.

\begin{defi}
Let $P$ be a poset. 
\begin{enumerate}
\item $x \in P$ is a {\em minimal element} if $y
\sse x$ implies $y = x$ for all $y \in P$.
We write $\min(P)$ for the set of all minimal elements of $P$.
\item We say that $P$ has {\em enough minimal elements}
if, for all $y \in P$,
there exists $x \in \min(P)$ such that $x \sse y$.
\item For a domain $D$, 
if $L(D)$ has enough minimal elements, we call
$\min(L(D))$ the {\em minimal-limit set} of $D$.
\end{enumerate}
\end{defi}

\noindent The poset $L(\T^{\omega})$ does not have enough minimal elements.

\begin{defi}\hfill
\begin{enumerate}
\item[(1)] Let $(P, \sqsubseteq)$ be a pointed poset with the least element $\bot_{P}$.
The {\em level} of $x \in P$, if it exists,  is the maximal length $n$ of a chain 
$\bot_{P} = y_{0} \sqsubsetneq y_{1} \sqsubsetneq \ldots \sqsubsetneq y_{n} = x$,
and it is denoted by $\level(x)$.
\item[(2)] A poset $P$ is {\em stratified} if it is pointed and every element of $P$ has a level.
\item[(3)] We say that $y$ is an {\em immediate successor} of $x$
if $x \sqsubsetneq y$ and there is no element $z$ such that
$x \sqsubsetneq z \sqsubsetneq y$.
We write $\succ (x)$ for the set of
immediate successors of $x$.
\item[(4)] We say that a stratified poset $P$ is {\em finite-branching}
if $\succ(x)$ is finite for every $x \in P$.  
\end{enumerate}
\end{defi}

In \cite{Tsuiki:2004}, the following proposition is proved with a slightly stronger
definition of finite-branchingness that contains  
the condition $\level(y) = \level(x) + 1$ for $y \in \succ(x)$. 
However, one can check that this condition is not used in the proof and
it holds with our definition of finite-branchingness.
\begin{prop}[Proposition 4.13 of \cite{Tsuiki:2004}]
\label{prop:fb}
If $D$ is a domain such that $K(D)$ is  finite-branching,
then $L(D)$ has enough minimal elements
and $\min(L(D))$ is compact. \qed
\end{prop}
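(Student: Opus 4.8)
The statement to prove is Proposition~\ref{prop:fb}: if $D$ is a domain with $K(D)$ finite-branching, then $L(D)$ has enough minimal elements and $\min(L(D))$ is compact. The plan is to build, for an arbitrary limit element $x \in L(D)$, a minimal limit element below it by a König's-lemma style argument, and then to prove compactness of $\min(L(D))$ via a sequential / net-convergence argument inside the Scott topology.

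\textbf{Step 1: enough minimal elements.} Fix $x \in L(D)$. Since $x$ is a limit element, $K_x = K(D) \cap \down{x}$ is an infinite directed set. The idea is to work with the stratification of $K(D)$ by level. Consider the tree of compact elements below $x$, organized by level; since $K(D)$ is finite-branching, each node has only finitely many immediate successors, so the subtree $\{e \in K_x : \level(e) \le n\}$ is finite for each $n$, yet the whole tree is infinite because $x = \lub K_x$ is not compact. By König's lemma there is an infinite chain $d_0 \sqsubsetneq d_1 \sqsubsetneq d_2 \sqsubsetneq \dots$ in $K_x$ with $\level(d_i)$ strictly increasing. This chain determines a non-principal ideal and hence a point $y \in L(D)$ with $y \sse x$. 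More care is needed to make $y$ minimal: I would instead choose the chain greedily so as to stay "as low as possible" — at each stage pick an immediate successor through which infinitely many elements of $K_x$ pass, i.e. build a chain whose ideal is contained in every ideal of a limit element below $x$ that is "below" the partial chain. Concretely, I expect the clean formulation to be: among all limit elements $\le x$, pick one, $y$, such that $K_y$ is minimal with respect to inclusion; such a minimal one exists because any descending chain of the $K_y$'s intersects in a directed set whose elements all have bounded-below levels... — and then show this forces $y$ to be a minimal element of $L(D)$, since any $z \sse y$ in $L(D)$ has $K_z \subseteq K_y$, hence $K_z = K_y$, hence $z = \lub K_z = \lub K_y = y$.

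\textbf{Step 2: compactness of $\min(L(D))$.} I would show $\min(L(D))$ is compact in the subspace topology inherited from the Scott topology on $D$. The natural route: the Scott-open sets of $D$ are generated by $\upa{e}$ for $e \in K(D)$, so an open cover of $\min(L(D))$ refines to a cover by basic opens $\{\upa{e_\alpha}\}$. Suppose for contradiction no finite subfamily covers. Then using finite-branchingness one constructs, level by level, a decreasing sequence of "uncovered" compact elements — at level $n$, since only finitely many immediate successors exist and the uncovered part of $\min(L(D))$ below the current node is still nonempty, some successor still has uncovered minimal-limit points above it. This yields an infinite chain $d_0 \sqsubsetneq d_1 \sqsubsetneq \dots$, hence a limit point $w$, and then by Step~1 a minimal limit point $w' \sse w$; but $w'$ is not in any $\upa{e_\alpha}$ from the chosen cover (each $d_i$, and hence $w'$ when we arrange $w' \spe d_i$ appropriately, avoided them), contradicting that $\{\upa{e_\alpha}\}$ is a cover. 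The bookkeeping to ensure the limit point we produce is genuinely a minimal-limit point that escapes the cover is the delicate part.

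\textbf{Main obstacle.} The hard part is Step~1's minimality: merely applying König's lemma gives a limit element below $x$ but not obviously a minimal one, and the greedy/"lowest branch" construction has to be set up so that the resulting ideal is not just small but actually minimal among ideals of limit elements — equivalently, so that pruning the subtree at any deeper node would make it finite. Getting the invariant right (at every stage, the chosen node has infinitely many compact elements of $K(D)$ above it within $\down{x}$, and this property is inherited by a chosen successor) is where the finite-branching hypothesis is really used, and it is the step I would write out most carefully; the compactness argument in Step~2 is then a variation on the same tree-pruning theme.
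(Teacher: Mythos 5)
First, a caveat: the paper does not prove this statement at all --- it is imported from \cite{Tsuiki:2004} with only the remark that the weaker notion of finite-branching used here does not affect the argument --- so there is no in-paper proof to compare yours against, and I am judging your plan on its own merits. The genuine gap is the one you yourself flag in Step 1, and the repair you gesture at does not work as stated. You propose to get minimality by Zorn's lemma, choosing $y \sse x$ in $L(D)$ with $K_y$ inclusion-minimal, and you justify the chain condition by asserting that a descending chain of the $K_y$'s ``intersects in a directed set''. That is false for finite-branching $K(D)$. Counterexample: take $K(D)$ with bottom $\bot$; incomparable $e,f$ above it; chains $e \sqsubsetneq p_1 \sqsubsetneq p_2 \sqsubsetneq \cdots$ and $f \sqsubsetneq q_1 \sqsubsetneq q_2 \sqsubsetneq \cdots$; an antichain $m_0,m_1,\dots$ with $m_j$ above $e,f$ and exactly $p_1,\dots,p_j,q_1,\dots,q_j$ (so the minimal upper bounds of $\{e,f\}$ sit at unbounded levels); and elements $g^n_k$ above $m_n,\dots,m_{n+k}$ with $g^n_k \sqsubsetneq g^n_{k+1}$ and $g^{n+1}_k \sqsubsetneq g^n_{k+1}$. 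This poset is countable, stratified, finite-branching and directed, and the ideals $I_n = \down{\{g^n_k : k\in\N\}}$ form a strictly descending chain of infinite ideals whose intersection, $\{\bot,e,f\}\cup\{p_i\}\cup\{q_i\}$, contains $e$ and $f$ but no upper bound of them: it is not directed, hence is not $K_z$ for any $z$. (The chain of limit elements does have a lower bound, namely $\lub\down{\{p_i\}}$, just not the one you name.) The correct repair: show the intersection $Q$ of the chain of ideals is downward-closed and meets every level (finite-branching plus stratification makes each level class of $K(D)$ finite; each $K_y$ meets each level class; a descending chain of nonempty finite sets has nonempty intersection), hence $Q$ is infinite; then run your K\"onig argument \emph{inside} $Q$ to extract an infinite chain whose down-closure is an infinite ideal contained in $Q$. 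Its lub is the lower bound Zorn needs, and minimality of an inclusion-minimal $K_y$ gives minimality of $y$ exactly as you say.

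Step 2 has the matching gap that your ``bookkeeping'' remark conceals: you build an increasing chain $d_0 \sqsubsetneq d_1 \sqsubsetneq \cdots$ of uncovered elements, pass to $w$ and then to a minimal $w' \sse w$, and hope to ``arrange $w' \spe d_i$'' --- but a minimal limit element below $w$ need not lie above any $d_i$, and this cannot be arranged. The property to propagate must be downward-closed. For a cover by basic opens $\{\upa{d}\cap\min(L(D)) : d \in A\}$, set $Q = K(D)\setminus\upa{A}$, which is downward-closed. If $Q$ is finite, then every compact element of some level $n$ lies in $\upa{A}$; since the ideal of every minimal limit element meets level $n$, the finitely many $d\in A$ of level at most $n$ already give a finite subcover. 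If $Q$ is infinite, K\"onig inside $Q$ gives an infinite ideal contained in $Q$, and Step 1 gives a minimal limit element $z$ with $K_z \subseteq Q$, which lies in no $\upa{d}$ with $d\in A$, so the family was not a cover. Alexander's subbase lemma then finishes. In short: your ingredients (K\"onig plus Zorn plus finiteness of levels) are the right ones, but both halves need the ``pass to a downward-closed set, then re-extract an ideal by K\"onig'' move that your plan replaces with an incorrect directedness claim.
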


\begin{defi}\hfill\label{def-ad}
\begin{enumerate}
\item
We say that a space $X$ is {\em adhesive}
if $X$ has at least two points and
closures of any two non-empty open sets
have non-empty intersection.
\item We say that $X$ is {\em nonadhesive}
if it is not adhesive.
\item We say that $X$ is {\em strongly nonadhesive}
if every open subspace is nonadhesive.
\end{enumerate}
\end{defi}

Nonadhesiveness (and even strongly nonadhesiveness)
is a weak condition that
many of the Hausdorff spaces satisfy. 
A space is called Urysohn (or completely Hausdorff or ${T}_{2\frac{1}{2}}$ in some literature) 
if any two distinct points can be separated by closed neighbourhoods.
A regular space is always Urysohn.
 
\begin{prop}\label{prop:urysohn}
Every Urysohn 
space is strongly nonadhesive. \qed
 \end{prop}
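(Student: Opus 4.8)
The plan is to prove directly that every Urysohn space $X$ has the property that every open subspace is nonadhesive. So fix a non-empty open subset $U \subseteq X$; I must exhibit two non-empty open subsets of $U$ whose closures (taken in $U$) are disjoint, or else show $U$ has fewer than two points. If $U$ is a singleton or empty, there is nothing to check since nonadhesiveness only concerns spaces with at least two points — a one-point or empty space is vacuously nonadhesive by the definition given. So assume $U$ has at least two distinct points $x, y$.

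Next I would invoke the Urysohn property: since $x \ne y$, there are closed neighbourhoods $C_x$ of $x$ and $C_y$ of $y$ in $X$ with $C_x \cap C_y = \emptyset$. Let $V = \operatorname{int}(C_x)$ and $W = \operatorname{int}(C_y)$, the interiors taken in $X$; these are non-empty open sets (containing $x$ and $y$ respectively), and $\cl_X V \subseteq C_x$, $\cl_X W \subseteq C_y$, so $\cl_X V$ and $\cl_X W$ are disjoint. Now intersect with $U$: set $V' = V \cap U$ and $W' = W \cap U$. Since $x, y \in U$ and $U$ is open, $V'$ and $W'$ are non-empty open subsets of $U$. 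For the closures in the subspace $U$ we have $\cl_U V' = \cl_X V' \cap U \subseteq \cl_X V \cap U$ and similarly $\cl_U W' \subseteq \cl_X W \cap U$; since $\cl_X V \cap \cl_X W = \emptyset$, we get $\cl_U V' \cap \cl_U W' = \emptyset$. Hence $U$ is nonadhesive, and as $U$ was an arbitrary non-empty open subspace, $X$ is strongly nonadhesive.

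I do not expect any real obstacle here — the argument is essentially just unwinding the definitions of "Urysohn", "strongly nonadhesive", and the behaviour of closures under passing to open subspaces. The only subtlety worth a sentence is the boundary case where an open subspace has at most one point, which must be handled separately because the definition of adhesiveness presupposes at least two points; in that degenerate situation the subspace is nonadhesive by fiat. One might also remark that the same computation shows more: a Urysohn space is in fact hereditarily (not just open-hereditarily) such that two-point subspaces separate, but only the open-subspace version is needed for the stated proposition.
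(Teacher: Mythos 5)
Your proof is correct and is exactly the routine definition-unwinding that the paper treats as immediate (it states the proposition with no proof): separate two points of the open subspace $U$ by disjoint closed neighbourhoods, pass to their interiors intersected with $U$, and use $\cl_U A = \cl_X A \cap U$. The handling of the at-most-one-point case is the right way to deal with the "at least two points" clause in the definition of adhesiveness.
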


Note that there is an adhesive Hausdorff space as the following example shows.

\begin{exa}\label{exampleA}
Let $P$ be the set of dyadic irrational numbers in $\I = [0,1]$
and $\N^{+}$ be the set of positive integers.
We define our space $A = P \cup \N^{+}$.
A neighbourhood base of $x \in P$ is $U \cap P$ for $U$
a Euclidean neighbourhood of $x \in \I$.
A neighbourhood base of $n \in \N^{+}$ is the union of $\{n\}$ and 
$U \cap { P}$ for $U$ a Euclidean 
neighbourhood of $\{k/2^n : \text{$k$ is an odd number}, 0 < k < 2^{n}\}$. 
One can easily verify that $A$ is Hausdorff.
The closure of $U \cap P$ is 
$(U \cap P) \cup \{n \in \N^{+}: k/2^{n} \in U
\text{ for some odd number $k$}\}$ and it contains $\{n \in \N^{+}: n > m\}$ for some $m$.
Therefore $A$ is adhesive.
The space $A$ has the following independent subbase $S^{A}$.
\begin{align*}
S^{A}_{n,a} = (G_{n,a} \cap P) \cup
\{n\in \N^{+} : k/2^{n} \in G_{n,a} \text{ for all odd number $k < 2^{n}$ }\}.
\end{align*}
We have 
$\varphi_{S^{A}}(x) = \varphi_{G}(x)$  for $x \in P$, and
$\varphi_{S^{A}}(n) = \bot^{n} 1 0^{\omega}$ for $n \in \N^{+}$. 
\end{exa}

As Propositions \ref{p-ad} and \ref{p-3} show,
adhesiveness of $X$
and finite-branchingness of $D_{S}$ are closely related.
Recall again that a (non)adhesive space means a (non)adhesive Hausdorff space.

\begin{prop}\label{p-ad}
Suppose that $X$ is an adhesive space
and $S$ is a proper dyadic subbase of $X$ such that
$S_{n,a} \ne \emptyset$ for every $\nn$ and $\a2$.
Then, $\succ(\bot^{\omega})$ in $K_{S}$ is infinite.
Therefore,  $K_{S}$ is not finite-branching.
\end{prop}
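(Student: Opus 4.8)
The plan is to show that $\bot^\omega$ has infinitely many immediate successors in $K_S$ by exhibiting, for each $n$, an immediate successor of the form $\bot^n c \bot^\omega$ (i.e.\ the finite bottomed sequence of length $n+1$ that is $\bot$ everywhere except for a digit $c\in\2$ in position $n$), and then arguing that infinitely many of these are genuinely distinct and lie in $K_S$. First I would record what the immediate successors of $\bot^\omega$ in $K_S$ look like: since $K_S\subseteq\T^*$ and the order is the restriction of $\sqsubseteq$, any $e\in\succ(\bot^\omega)$ must be a finite bottomed sequence with exactly one digit, hence of the form $\bot^n c\,\bot^\omega$ (written as an element of $\T^*$) for some $n\in\N$, $c\in\2$ — provided such an element actually belongs to $K_S$; conversely any such element, if it is in $K_S$, is automatically an immediate successor of $\bot^\omega$ because nothing of $K_S$ sits strictly between $\bot^\omega$ and a one-digit sequence. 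So the whole task reduces to showing: for infinitely many $n$ there is a digit $c=c(n)\in\2$ with $\bot^n c\,\bot^\omega \in K_S$, equivalently $\tilde y(n)=c$ and $\tilde y(k)=\bot$ for all $k<n$, for some $y\in X$ — i.e.\ there exists $y\in S_{n,c}$ with $y\in S_{k,\bot}$ for all $k<n$.

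Next I would use the hypothesis that $S_{n,a}\neq\emptyset$ for all $n,a$ together with adhesiveness. The key step is: for a fixed $n$, I want a point of $X$ lying in $S_{n,0}$ (or $S_{n,1}$) but on the boundary of $S_{k,a}$ for every $k<n$, i.e.\ lying in $\bigcap_{k<n} S_{k,\bot} = \bigcap_{k<n}\bd S_{k,0}$. Here adhesiveness enters: the closures of the nonempty open sets $S_{0,0},\dots,S_{n-1,0}$ (or more flexibly, a judiciously chosen $S_{k,a_k}$ for each $k<n$) have pairwise — and in fact, by an easy induction using property (3) of a filter, jointly — nonempty intersection, so $\bigcap_{k<n}\cl S_{k,a_k}\neq\emptyset$. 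To push this into $\bigcap_{k<n}\bd S_{k,a_k}$ rather than merely the closures, I would pick, for each $k<n$, the value $a_k\in\2$ so that the point I am chasing is not in the open set $S_{k,a_k}$ itself; concretely, if a candidate point $y$ lands in $S_{k,0}$ I instead work with $\cl S_{k,1}$ at coordinate $k$, and since $S_{k,1}=\ext S_{k,0}$ this keeps $y$ out of the interior and eventually forces $y$ onto the common boundary $S_{k,\bot}$. Then $\tilde y$ begins with $n$ bottoms, so $\tilde y|_{n+1}$ is $\bot^n c\,\bot^\omega$ for the digit $c=\tilde y(n)$ (which is a digit, not $\bot$, by the first part of the construction — $y$ was chosen in $S_{n,0}$), giving the desired element of $K_S$.

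Finally, distinctness: the sequences $\bot^{n}c(n)\,\bot^\omega$ obtained for distinct $n$ are pairwise distinct because their unique digit sits in a different position, so this yields infinitely many distinct immediate successors of $\bot^\omega$, proving $\succ(\bot^\omega)$ is infinite and hence $K_S$ is not finite-branching. I expect the main obstacle to be the bookkeeping in the induction that upgrades "the closures meet" to "they meet on the common boundaries" — one has to choose the $\2$-values $a_k$ coherently while keeping the intersection nonempty at each stage, and argue (using that each $\bd S_{k,0}=S_{k,\bot}$ has empty interior and that $X$ is adhesive so removing the two open complementary pieces $S_{k,0}, S_{k,1}$ cannot kill the intersection of the remaining closures) that a point surviving into all $n$ boundaries exists; the rest is essentially the observations in the first paragraph plus Proposition~\ref{p:proper} for properties of $\bar S$.
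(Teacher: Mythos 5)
Your reduction of the problem is the right one: $\succ(\bot^{\omega})$ in $K_S$ consists exactly of the one--digit elements $\bot^{k}c$, and it suffices to produce such elements with $k$ arbitrarily large, i.e.\ for each $n$ a point $y$ with $\tilde{y}(k)=\bot$ for all $k<n$. But the core of your argument --- producing a point of $\bigcap_{k<n}S_{k,\bot}$ --- has two genuine gaps. First, adhesiveness is a statement about \emph{two} nonempty open sets only; your claim that the closures of $S_{0,a_0},\dots,S_{n-1,a_{n-1}}$ have jointly nonempty intersection ``by an easy induction using property (3) of a filter'' does not go through. The intersection of two closures is merely a closed set, not the closure of a nonempty open set (it may well have empty interior), so adhesiveness cannot be re-applied at the next stage; the family of closures of nonempty open sets is not a filter base. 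Second, even granted a point of $\bigcap_{k<n}\cl S_{k,a_k}$, your device for forcing it onto all the boundaries $S_{k,\bot}$ --- flipping $a_k$ whenever the candidate lands inside $S_{k,a_k}$ --- changes the candidate point with every flip and is never shown to stabilize; you flag this yourself as the main obstacle and leave it unresolved.

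Both gaps are closed at once by the single idea your proposal is missing, and it is precisely where properness does the work (in your write-up properness is barely used). Choose bitwise-complementary words $d,e\in\2^{n}$ with $S(d)\ne\emptyset$ and $S(e)\ne\emptyset$; this requires a small argument (the paper takes $d=p|_{n}$ for $p\in\upa{\tilde{x}}\cap\2^{\omega}$ and uses Proposition~\ref{p:density} together with the hypothesis $S_{n,a}\ne\emptyset$ and a maximal-length element of $\succ(\bot^{\omega})$ to handle $e$). Properness gives $\cl S(d)=\bar{S}(d)=\bigcap_{k<n}\cl S_{k,d(k)}$ and likewise for $e$, hence
$\cl S(d)\cap\cl S(e)=\bigcap_{k<n}\bigl(\cl S_{k,0}\cap\cl S_{k,1}\bigr)=\bigcap_{k<n}S_{k,\bot}$.
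So a \emph{single} application of adhesiveness to the pair of nonempty open sets $S(d)$, $S(e)$ produces $y\in\bigcap_{k<n}S_{k,\bot}$; since $\tilde{y}\ne\bot^{\omega}$, its first digit occurs at some position $\ge n$ and yields a one-digit element of $K_S$ longer than $n$. This complementation-plus-properness step is what simultaneously replaces your unjustified induction and your unterminated boundary-chasing; without it the argument does not close.
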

\proof
All the elements of $\succ(\bot^{\omega})$
have the form $\bot^{k}a$ for $k \in \N$ and $\a2$.
Suppose that $\succ(\bot^{\omega})$ is finite
and let $\bot^{n-1}a$ be an element with the maximal length.
For $b = 1-a$, take $x \in S_{n,b}$ and $p \in \upa {\tilde x} \cap \2^{\omega}$. 
For $d = p|_{n}$, $d \in \wh K_{S}$ holds and therefore
$S(d) \ne \emptyset$ by  Proposition \ref{p:density}.
Let $e \in \2^{n}$ be the bitwise complement of $d$.
Since $\bot^{n-1}a \in K_{S}$, $e \in \wh K_{S}$ and therefore
$S(e) \ne \emptyset$ by  Proposition \ref{p:density}.
Therefore, closures of $S(d)$ and $S(e)$ intersect.
Since $S$ is proper, $\cl S(d) = \bar{S}(d)$
and $\cl S(e) = \bar{S}(e)$.
Therefore, there exists $y \in \bar{S}(d) \cap \bar{S}(e)$.
Since $\tilde{y}|_{n} = \bot^{\omega}$, 
the smallest index of digits in $\tilde{y}$ is greater than $n$,
and we have contradiction.
\qed

For the independent subbase $S^{A}$ of $A$ 
in Example \ref{exampleA},
$\succ(\bot^{\omega}) = \{\bot^{k}1 : k \in \N\}$.

\begin{prop}\label{p-3}
Suppose that $X$ is a nonadhesive space and
$S$ is a proper dyadic subbase of $X$.  Then, $\succ(\bot^{\omega})$ in $K_{S}$ is finite. 
\end{prop}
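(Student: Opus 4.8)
The plan is to prove the contrapositive-flavoured statement directly: assuming $\succ(\bot^\omega)$ in $K_S$ is infinite, I will produce two non-empty open sets of $X$ whose closures meet, contradicting nonadhesiveness. As in the proof of Proposition \ref{p-ad}, every element of $\succ(\bot^\omega)$ in $K_S$ has the shape $\bot^k a$ with $k \in \N$, $a \in \2$, since any compact element of $K_S$ strictly above $\bot^\omega$ must have exactly one digit, and it is a successor of $\bot^\omega$ iff that digit sits in the first available slot. If $\succ(\bot^\omega)$ is infinite, then for arbitrarily large $k$ there is $a_k \in \2$ with $\bot^{k} a_k \in K_S$; in particular I can pick two indices $m < n$ with $\bot^{m} a \in K_S$ and $\bot^{n} b \in K_S$ for some $a,b \in \2$.

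First I would record that $\bot^m a \in K_S$ means there is $x \in \widetilde X$ with $\bot^m a \sse x$, i.e.\ a point $x\in X$ whose code $\tilde x$ is $\bot$ on the first $m$ coordinates and has value $a$ at coordinate $m$; symmetrically a point $y\in X$ with $\tilde y$ equal to $\bot$ on the first $n$ coordinates and value $b$ at coordinate $n$. Now consider the base elements $S(\bot^m a) = S_{m,a}$ and $S(\bot^n b) = S_{n,b}$ — both non-empty, witnessed by $x$ and $y$ respectively. The key point is that these two open sets have disjoint closures and, more to the point, I need a \emph{pair of non-empty open sets of the whole space whose closures are disjoint}, which is exactly the failure of adhesiveness, but I have to be careful: nonadhesive means \emph{some} such pair exists, which is what I want to conclude, not assume. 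So the real work is to show $S_{m,a}$ and $S_{n,b}$ (or some shrinkings of them) have disjoint closures, i.e.\ $\cl S_{m,a} \cap \cl S_{n,b} = \emptyset$; by properness $\cl S_{m,a} = \bar S(\bot^m a) = S_{m,a}\cup S_{m,\bot}$ and similarly for the other, so I must rule out a common point $z$. Such a $z$ would satisfy $\tilde z(m) \sse a$ and $\tilde z(n) \sse b$, which is not obviously contradictory — so I expect the actual argument splits on whether the first digit of $\tilde x$ really is at position $m$ (it is, since $\bot^m a \sse \tilde x$) and uses that $\{S(e): e \in K_{\tilde x}\}$ is a filter base converging to $x$ by Proposition \ref{p:proper}(1), together with the analogous fact for $y$, to separate the two points by open sets \emph{within} a common open neighbourhood and then invoke nonadhesiveness of that neighbourhood. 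Wait — $X$ itself is only assumed nonadhesive, not strongly nonadhesive, so I should not pass to a subspace; instead I think the cleanest route is: take $x \in S_{m,a}$ and $y \in S_{n,b}$ as above, use Hausdorffness / Proposition \ref{p:proper}(2) to get $\tilde x \ncomp \tilde y$, hence an index $j$ with $\tilde x(j), \tilde y(j) \in \2$ and $\tilde x(j) \ne \tilde y(j)$; since $\tilde x|_n = \bot^\omega$ forces $j \geq n > m$, consider the finite codes $d = \tilde x|_{j+1}$ and $e = \tilde y|_{j+1}$, which are incompatible, so $\cl S(d) = \bar S(d)$ and $\cl S(e) = \bar S(e)$ are disjoint by properness, and $S(d), S(e)$ are non-empty. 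That exhibits two non-empty open sets with disjoint closures, contradicting adhesiveness — but we only have nonadhesiveness!

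So let me restate the logic carefully, because this is the main obstacle. Proposition \ref{p-3} concludes $\succ(\bot^\omega)$ is finite from nonadhesiveness; equivalently, if $\succ(\bot^\omega)$ is infinite then $X$ is adhesive. Adhesive requires: at least two points (clear, since two distinct codes appear), and closures of \emph{any} two non-empty open sets intersect. So I must show: given infinitely many $\bot^k a_k \in K_S$ and two \emph{arbitrary} non-empty open $U, V \subseteq X$, their closures meet. Shrinking, I may take $U = S(d)$, $V = S(e)$ for finite codes $d, e$ with $d, e \ne \bot^\omega$; let $j$ bound the lengths of $d$ and $e$. Choose $k > j$ with $\bot^k c \in K_S$, so there is $z \in \widetilde X$ with $\tilde z|_k = \bot^\omega$; then $\tilde z$ is compatible with every code of length $\le j$, in particular $\tilde z \comp d$ and $\tilde z \comp e$, so $z \in \bar S(d) \cap \bar S(e) = \cl S(d) \cap \cl S(e)$ by properness. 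Hence $\cl U \cap \cl V \ne \emptyset$, and $X$ is adhesive — contradiction. The main subtlety to get right in writing this up is the reduction to basic open sets of the form $S(d)$ with $d \ne \bot^\omega$ (an arbitrary non-empty open set contains such a basic set, and any basic set $S(d)$ with $d \ne \bot^\omega$ contains one with $d$ genuinely finite and having a digit, trimming trailing $\bot$'s), and the observation that $\tilde z|_k = \bot^\omega$ with $k$ large makes $\tilde z$ compatible with all short codes — I expect that bookkeeping, rather than any deep topology, to be where care is needed.
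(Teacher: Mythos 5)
Your final argument is correct and is essentially the paper's proof run in contrapositive form: the paper takes the pair of basic opens with disjoint closures supplied by nonadhesiveness and shows no point's code can be all $\bot$ up to the maximum of their lengths, while you take the points with arbitrarily long all-$\bot$ prefixes supplied by an infinite $\succ(\bot^{\omega})$ and show they lie in $\bar{S}(d)\cap\bar{S}(e)=\cl S(d)\cap\cl S(e)$ for every pair of short codes, forcing adhesiveness. The key step (properness plus compatibility of a long-$\bot$-prefix code with every short code) is identical, so once you discard the false start in your middle paragraph this matches the paper's argument.
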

\proof
Since $S$ is nonadhesive,
for some $p, q \in \T^*$, $S(p) \ne \emptyset$,
$S(q) \ne \emptyset$,
and $\cl{S(p)} \cap \cl{S(q)} = \emptyset$ hold.
Since $S$ is proper, 
$\bar{S}(p) \cap \bar{S}(q) = \emptyset$.
Let $n = \max\{|p|, |q|\}$.  If $\tilde x |_{n}  = \bot^{\omega}$ for some $x \in X$, then 
$x \in \bar{S}(p)$ and $x \in \bar{S}(q) $ and we have contradiction.
Thus, in $K_{S}$,
$\succ(\bot^{\omega}) \subseteq
\{\bot^{k} a \bot^{\omega} : k  < n, \a2\} $. 
\qed

\begin{lem}\label{l-4}
Suppose that $S$ is a proper dyadic subbase of a space $X$ and 
$e \in \T^*$.  Let $\nu$ be an enumeration of $\N \setminus \dom(e)$.
Then, $$T_{n,a} = S_{\nu(n),a} \cap S(e) \ \ \ (\nn , \a2)$$
is a proper dyadic subbase of $S(e)$.\qed
\end{lem}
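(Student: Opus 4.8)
The plan is to write $V = S(e)$, which is a basic open subset of $X$, regard it with the subspace topology, and verify the three clauses defining a dyadic subbase together with properness. The whole argument rests on one elementary fact about open subspaces: if $V\subseteq X$ is open and $A\subseteq X$ is arbitrary, then $\cl_V(A\cap V)=\cl_X(A)\cap V$, and consequently $\ext_V(A\cap V)=\ext_X(A)\cap V$ and, by taking exteriors twice, $A\cap V$ is regular open in $V$ whenever $A$ is regular open in $X$. I would record the easy proof of the first identity: the inclusion $\cl_X(A\cap V)\subseteq\cl_X A$ gives one direction, and if $x\in V\cap\cl_X A$ then for any $X$-neighbourhood $W$ of $x$ the set $W\cap V$ is again a neighbourhood of $x$ and meets $A$, hence meets $A\cap V$, so $x\in\cl_X(A\cap V)$.

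Granting this, the dyadic-subbase clauses are quick. Each $T_{n,a}=S_{\nu(n),a}\cap V$ is regular open in $V$ since $S_{\nu(n),a}$ is regular open in $X$; and $T_{n,1}=S_{\nu(n),1}\cap V=\ext_X(S_{\nu(n),0})\cap V=\ext_V(S_{\nu(n),0}\cap V)=\ext_V T_{n,0}$, using that $S$ is a dyadic subbase for the middle equality. For the subbase condition, $\{S_{m,a}\cap V: m\in\N,\ a\in\2\}$ is a subbase of $V$ because $\{S_{m,a}\}$ is a subbase of $X$; for $m\in\dom(e)$ this trace equals $V$ when $a=e(m)$ (as $V\subseteq S_{m,e(m)}$) and equals $\emptyset$ otherwise (as $S_{m,1-e(m)}=\ext_X S_{m,e(m)}$ is disjoint from $V$), while for $m=\nu(n)\notin\dom(e)$ it equals $T_{n,a}$; discarding the two trivial members $V$ and $\emptyset$ leaves exactly $\{T_{n,a}\}$, still a subbase of $V$.

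For properness I would reduce to properness of $S$ by a reindexing. Given $d\in\T^*$, let $d^{\nu}\in\T^*$ be defined by $d^{\nu}(\nu(k))=d(k)$ for $k\in\dom(d)$ and $d^{\nu}(j)=\bot$ otherwise, so $\dom(d^{\nu})=\nu(\dom(d))\subseteq\N\setminus\dom(e)$. Then $T(d)=\bigcap_{k\in\dom(d)}(S_{\nu(k),d(k)}\cap V)=S(d^{\nu})\cap V$, hence by the closure identity and properness of $S$, $\cl_V T(d)=\cl_X(S(d^{\nu}))\cap V=\bar S(d^{\nu})\cap V$. On the other hand $\bar T(d)=\bigcap_{k\in\dom(d)}\cl_V(S_{\nu(k),d(k)}\cap V)=\bigcap_{k\in\dom(d)}(\cl_X S_{\nu(k),d(k)}\cap V)=\bar S(d^{\nu})\cap V$. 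Hence $\cl_V T(d)=\bar T(d)$, which is properness of $T$.

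There is no serious obstacle here: the lemma amounts to the statement that restricting a proper dyadic subbase to a basic open subspace, after discarding the coordinates already decided by $e$, again yields a proper dyadic subbase. The only things to be careful about are the bookkeeping of the reindexing $d\mapsto d^{\nu}$, the observation that the coordinates in $\dom(e)$ contribute only the trivial subbase members $V$ and $\emptyset$, and consistently working with subspace closures via the identity $\cl_V(A\cap V)=\cl_X(A)\cap V$; the degenerate case $S(e)=\emptyset$ needs no separate treatment, since then everything in sight is empty.
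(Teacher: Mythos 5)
Your proof is correct and follows essentially the same route as the paper's: both rest on the identity $\cl_{V}(A\cap V)=\cl_{X}(A)\cap V$ for the open set $V=S(e)$ (and its consequence for exteriors), and both reduce properness of $T$ to properness of $S$ applied to $S(d^{\nu})=\bigcap_{k\in\dom(d)}S_{\nu(k),d(k)}$. You are somewhat more explicit than the paper about the reindexing $d\mapsto d^{\nu}$ and about why the traces $\{T_{n,a}\}$ still generate the subspace topology, but these are details the paper leaves implicit rather than a different argument.
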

\begin{proof}
Let $A$ be the regular open set $S(e)$.
First, note that if $P$ is a regular open subset of $X$, then 
$A\cap P$ is a regular open subset of $A$  and  $\ext_{A} (A \cap P) = A \cap \ext_{X} P$.
Therefore, $T$ is a dyadic subbase.
Note also that $\cl_{A}(A \cap P) = A \cap \cl_{X} P$.
Therefore, for $d \in \T^{*}$, we have
\begin{align*}
\cl_{A} \bigcap _{k \in \dom(d)} T_{k,d(k)}  
&= \cl_{A} (A \cap \bigcap _{k \in \dom(d)} S_{\nu(k),d(k)})\\
&= A \cap  \cl_{X}\bigcap _{k \in \dom(d)} S_{\nu(k),d(k)} \\
&= A \cap  \bigcap _{k \in \dom(d)} \cl_{X} S_{\nu(k),d(k)}\\
&= \bigcap _{k \in \dom(d)} (A \cap \cl_{X} S_{\nu(k),d(k)})\\
&= \bigcap _{k \in \dom(d)} \cl_{A}(A \cap S_{\nu(k),d(k)})\\
&= \bigcap _{k \in \dom(d)} \cl_{A} T_{k,d(k)}.
\end{align*}
Therefore, $T$ is proper.
\end{proof}

\begin{prop} \label{prop:fb-proper}
Suppose that $X$ is a strongly nonadhesive space and
$S$ is a proper dyadic subbase of $X$. 
\begin{enumerate}
\item The poset $K_{S}$ is finite-branching.
\item  The poset $\wh K_{S}$ is finite-branching.
\end{enumerate}
\end{prop}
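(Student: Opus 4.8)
The strategy is to reduce the finite-branching condition at an arbitrary element to the finite-branching condition at the bottom of a suitable relativized subbase, and then invoke Proposition~\ref{p-3}. More precisely, fix $e \in K_S$ (respectively $e \in \wh{K}_S$); I want to show $\succ(e)$ is finite in $K_S$ (respectively $\wh{K}_S$). The elements of $\succ(e)$ all have the form $e[k := a]$ with $k \notin \dom(e)$ and $a \in \2$, so the content is to bound the set of indices $k$ for which $e[k:=a]$ can lie in the poset. The key observation is that $e[k:=a] \in K_S$ iff $S(e[k:=a]) \neq \emptyset$ (using Proposition~\ref{p:density}(1) together with Lemma~\ref{lemmale}(1), exactly as in the proof of Proposition~\ref{l:01}); similarly $e[k:=a] \in \wh{K}_S$ iff $S(e[k:=a]) \neq \emptyset$ by Proposition~\ref{p:density}(2). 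So in both cases the immediate successors of $e$ correspond to the indices $k \notin \dom(e)$ and bits $a$ such that $S_{k,a} \cap S(e) \neq \emptyset$, i.e.\ such that the relativized subbase element $T_{k',a}$ from Lemma~\ref{l-4} is nonempty (writing $k = \nu(k')$).

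\textbf{Carrying it out.} First I would dispose of the case $S(e) = \emptyset$: then $e \notin K_S$ and $e \notin \wh{K}_S$, so nothing to prove; assume $S(e) \neq \emptyset$. By Lemma~\ref{l-4}, $T_{n,a} = S_{\nu(n),a} \cap S(e)$ is a proper dyadic subbase of the (nonempty) open subspace $S(e)$, which is nonadhesive since $X$ is strongly nonadhesive. Now apply Proposition~\ref{p-3} to the space $S(e)$ with its proper dyadic subbase $T$: the set $\succ(\bot^\omega)$ in $K_T$ is finite, so by the proof of that proposition there is a bound $N$ with $\succ(\bot^\omega) \subseteq \{\bot^m a \bot^\omega : m < N, a \in \2\}$ in $K_T$, and in particular $T_{n,a} = \emptyset$ for every pair with $n \geq N$ (an index $n$ with $T_{n,a} \neq \emptyset$ for some $a$ would force $\bot^n a \in K_T$, hence an immediate successor of $\bot^\omega$ of length $n+1 > N$). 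Translating back via $\nu$: for all but finitely many $k \notin \dom(e)$, we have $S_{k,a} \cap S(e) = \emptyset$ for both $a \in \2$, hence $e[k:=a] \notin K_S$ and $e[k:=a] \notin \wh{K}_S$. The remaining candidates for immediate successors of $e$ form a finite set, so $\succ(e)$ is finite in $K_S$ and in $\wh{K}_S$; this proves both (1) and (2).

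\textbf{Main obstacle.} The one delicate point is the correspondence between membership of $e[k:=a]$ in $K_S$ (or $\wh{K}_S$) and nonemptiness of $S(e[k:=a])$, i.e.\ making sure the argument of Proposition~\ref{l:01} applies not just to total elements but to the finite bottomed sequences under consideration; this works because $K_S$ and $\wh{K}_S$ are closed under restriction (Lemma~\ref{lemmale}) and because $S(e[k:=a]) \neq \emptyset$ means there is a point $y$ with $\tilde y \sqsupseteq e[k:=a]$, so $\tilde y|_{\max(\dom(e)\cup\{k\})+1} \sqsupseteq e[k:=a]$ witnesses membership in $K_S$. A secondary bookkeeping point is that Proposition~\ref{p-3} is stated for a proper dyadic subbase of a Hausdorff space, so one must note that the open subspace $S(e)$ of the (second-countable Hausdorff) space $X$ is again second-countable Hausdorff, which is clear; and that $S(e)$, being an open subspace of the strongly nonadhesive $X$, is nonadhesive by definition of strong nonadhesiveness. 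I also record that $\succ$ here is taken in the posets $K_S$ and $\wh{K}_S$, not in $\T^*$; the restriction-closure lemma guarantees that the level structure and immediate-successor relation behave as expected.
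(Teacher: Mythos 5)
Your high-level plan for part (1) --- relativize to $S(e)$ via Lemma~\ref{l-4} and invoke Proposition~\ref{p-3} --- is exactly the paper's, but the execution contains a fatal error: the claimed equivalence ``$d\in K_S$ iff $S(d)\neq\emptyset$'' (and likewise for $\wh K_S$) is false. Only the forward implication holds. Membership $d\in K_S$ means $d=\tilde x|_{|d|}$ for some $x$, i.e.\ there is a point $x$ with $\tilde x(k)=d(k)$ for $k\in\dom(d)$ \emph{and} $\tilde x(k)=\bot$ for every $k<|d|$ with $k\notin\dom(d)$; this is the condition $\exS^{|d|}(d)\neq\emptyset$ of Lemma~\ref{lemexs}, which is much stronger than $S(d)\neq\emptyset$. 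Your witness ``$\tilde y|_{m}\sqsupseteq e[k:=a]$'' only shows $e[k:=a]$ lies \emph{below} an element of $K_S$, not that it equals one. Concretely, for the Gray subbase one has $S(\bot 0)=[0,1/4)\cup(3/4,1]\neq\emptyset$ yet $\bot 0\notin K_G$, as the paper itself records; and for any independent subbase your criterion would give $K_S=\T^*$ and $D_S=\T^\omega$, collapsing the entire construction.

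This false equivalence is what powers your main step, and that step is also false on its own terms: it is not true that all but finitely many relativized subbase elements $T_{n,a}$ are empty. Proposition~\ref{p-3} does not assert that; its proof shows only that beyond some index $n$ no point's code is still all-bottom, i.e.\ $\exS^{n}(\bot^{\omega})=\emptyset$. Again the Gray subbase is a counterexample: $\I$ is nonadhesive, every $G_{n,a}$ is nonempty, yet $\succ(\bot^\omega)=\{0,1,\bot 1\}$ is finite. So your bound on the candidate successors evaporates, and with it the proof. The correct route (the paper's) is to identify $\upa{e}$ in $K_S$ with $K_T$ and apply Proposition~\ref{p-3} directly to $K_T$, where the ``exact restriction'' structure of $K_T$ does the work. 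Two further gaps: you assert without proof that every immediate successor has the form $e[k:=a]$ (true in $K_S$ because its elements are exact restrictions, but this needs an argument, especially since the paper's notion of finite-branching drops the level condition); and part (2) cannot be dismissed with ``similarly'' --- $\wh K_S$ does not relativize the same way, and the paper needs a separate combinatorial argument deducing (2) from (1).
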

\proof \hfill
\begin{enumerate}
\item Let $e \in \T^*$.
By applying Proposition \ref{p-3}
to the proper dyadic subbase $T$ on $S(e)$ in Lemma \ref{l-4},
$\succ(\bot)$ is finite in the poset $K_{T}$.
Since $K_{T}$ is identical to $\upa{e}$ in $K_{S}$,
$\succ(e)$ is finite in $K_{S}$.
\item  
In this proof, $\succ(d)$ for $d \in K_{S}$ means $\succ(d)$ in 
$K_{S}$.  Let $e \in \wh{K}_{S}$ and let
$k$ be the maximal length of elements in $\cup_{d \in \down\,\, e \cap K_{S}} \succ(d)$,
which exists by (1).
Suppose that, for  some $n \geq  k$ and $\a2$,  $e[n:=a] \in \wh K_{S}$.
Then, for some $x \in X$ and $p \sqsupseteq \tilde x$, 
$p|_{n+1} = e[n:=a]$.  Therefore, $\tilde x|_{n} \sqsubseteq e$. 
For $d_{0} = \tilde x|_{n}$, 
let $m > n$ be the least integer such that $\tilde x|_{m} \sqsupsetneq d_{0}$.
The set $\succ(d_{0})$ contains $\tilde x|_{m}$
 and we have contradiction.
\qed
\end{enumerate}

\begin{thm} \label{theorem:min}
Suppose that $X$ is a strongly nonadhesive space and
$S$ is a proper dyadic subbase of $X$.
\begin{enumerate}
\item $L(D_S)$ has enough minimal elements and
$\min(L(D_S))$ is compact.
\item $L(\wh{D}_S)$ has enough minimal elements
and $\min(L(\wh{D}_S))$ is compact.
\end{enumerate}
\end{thm}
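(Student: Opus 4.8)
The plan is to reduce Theorem~\ref{theorem:min} to Proposition~\ref{prop:fb} via Proposition~\ref{prop:fb-proper}. Both $D_S$ and $\wh D_S$ are domains: $D_S = \Idl(K_S)$ and $\wh D_S = \Idl(\wh K_S)$ by definition, and the ideal completion of a countable pointed poset is always a domain, with $K(D_S) \cong K_S$ and $K(\wh D_S) \cong \wh K_S$ as posets. So it suffices to verify that $K_S$ and $\wh K_S$ are finite-branching stratified posets, and then invoke Proposition~\ref{prop:fb}.

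First I would check stratification: both $K_S$ and $\wh K_S$ are subposets of $\T^*$ containing $\bot^\omega$ as least element, and every element $e$ has finite length $|e|$. The number of digits of $e$ gives a bound on the length of any chain below $e$ (each strict step below $e$ must either add a digit or be forced, but more carefully: $\T^*$ is itself stratified since any chain from $\bot^\omega$ to $e$ has length at most $|e| + (\text{number of digits of } e)$, and subposets of stratified posets with the same least element are stratified provided every downward chain is finite, which holds because strict inclusion in $\T^*$ strictly increases the finite quantity $\sum_k [e(k)\neq\bot] \cdot 2 + (\text{implicit length contributions})$). I would just state that $K_S, \wh K_S \subseteq \T^*$ are stratified because $\T^*$ is, since every element has finitely many digits and finite length, and these posets are downward-closed in the relevant sense (Lemma~\ref{lemmale} guarantees $e|_n \in K_S$ resp.\ $\wh K_S$ for $e$ in the poset, which helps identify the elements below a given one).

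Then the finite-branching condition is exactly the content of Proposition~\ref{prop:fb-proper}: part~(1) gives that $K_S$ is finite-branching and part~(2) gives that $\wh K_S$ is finite-branching, both under the hypotheses that $X$ is strongly nonadhesive and $S$ is proper. Applying Proposition~\ref{prop:fb} to $D = D_S$ yields that $L(D_S)$ has enough minimal elements and $\min(L(D_S))$ is compact, proving~(1); applying it to $D = \wh D_S$ proves~(2). So the proof is essentially a one-line invocation of the two preceding results once the domain structure is noted.

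The main obstacle, in a sense, is not in this proof at all — it is already dispatched in Proposition~\ref{prop:fb-proper}, which does the real work via the localization Lemma~\ref{l-4} and the nonadhesiveness argument of Proposition~\ref{p-3}. The only thing I would be careful about here is making sure the stratification hypothesis of Proposition~\ref{prop:fb} (which talks about $K(D)$ being finite-branching, and finite-branching presupposes stratified) is legitimately met; since $\T^*$ is stratified with $\level$ bounded in terms of digit count, and $K_S$, $\wh K_S$ inherit this, there is no gap. I would write perhaps three sentences: note $K(D_S)\cong K_S$ and $K(\wh D_S)\cong\wh K_S$ are stratified subposets of $\T^*$, cite Proposition~\ref{prop:fb-proper} for finite-branchingness, and cite Proposition~\ref{prop:fb} for the conclusion.
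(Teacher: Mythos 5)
Your proposal is correct and matches the paper's proof, which is literally the one-line citation of Propositions~\ref{prop:fb} and~\ref{prop:fb-proper}. Your extra care about stratification is a reasonable (and harmless) addition: every element of $\T^*$ has only finitely many digits, so chains below it are bounded and $K_S$, $\wh K_S$ inherit stratification.
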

\proof
From Proposition \ref{prop:fb} and \ref{prop:fb-proper}.\qed

Note that, as Proposition \ref{prop:urysohn} shows,
Theorem \ref{theorem:min}  
is
applicable to all the Urysohn spaces, in particular, to regular spaces.
Note also that the premise of Theorem \ref{theorem:min} is
not a necessary condition for $L(D_{S})$
to have enough minimal elements.
For example, for the space $A$
and the dyadic subbase $S^{{A}}$
in Example \ref{exampleA},
the domain $D_{S^{ A}}$ has enough minimal elements
and $\varphi_{S^{ A}}( A)
\subseteq \min(L(D_{S^{ A}}))$.

It is shown in \cite{Tsukam-Tsuiki-Kokyuroku} that
there is a Hausdorff space $X$
and an independent subbase $S$ of $X$ such that
$D_{S}$ is equal to $\T^{\omega}$
and therefore $L(D_{S})$ does not have
enough minimal elements.

\section{Domain representations in minimal-limit sets}

Now, we show that $X$ is embedded in $\min (L(D_{S}))$ and $\min (L(\wh{D}_{S}))$
for the case $S$ is a proper dyadic subbase of a regular space $X$.
We start with new notations 
and a small lemma.

\begin{defi}
For a dyadic subbase $S$ of a space $X$,
$p \in \T^{\omega}$, and $\nn$,
we define $\exS^{n}(p) \subseteq X$
and $\exbarS^{n}(p) \subseteq X$ as follows.
\begin{alignat*}{2}
\exS^{n}(p) &= \bigcap_{k < n}S_{k,p(k)} &
 &= S(p|_{n}) \ \cap
\bigcap_{\scriptsize\begin{array}{c}
k < n,\\ k \not \in \dom(p)\end{array}}
S_{k,\bot},\\
\exbarS^n(p) &= \bigcap_{k < n}\cl S_{k,p(k)} &
\ &= \bar{S}(p|_{n}) \ \cap
\bigcap_{\scriptsize\begin{array}{c}
k < n,\\ k \not \in \dom(p)\end{array}}
S_{k,\bot}.\\
\end{alignat*}
  \end{defi}

\begin{lem}\label{lemexs}
  Let $e \in \T^{*}$ and $n = |e|$.
\begin{enumerate}
\item $\exS^{n}(e) \ne \emptyset$ if and only if $e \in K_{S}$.
\item $\exbarS^{n}(e) \ne \emptyset$ if and only if $e \in \wh{K}_{S}$.
\end{enumerate}
\end{lem}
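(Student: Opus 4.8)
The plan is to prove both equivalences directly by unwinding the definitions of $\exS^{n}(e)$, $\exbarS^{n}(e)$, $K_{S}$, and $\wh{K}_{S}$, using the characterizations \eqref{eq:s} and \eqref{eq:cs} of $S(p)$ and $\bar{S}(p)$ in terms of $\tilde{x}$. For part (1), I would first observe that $\exS^{n}(e) = \bigcap_{k<n} S_{k,e(k)}$ consists precisely of those $x \in X$ with $\tilde{x}(k) = e(k)$ for all $k < n$; since $n = |e|$, this says exactly $\tilde{x}|_{n} = e$ (note $e = e|_{n}$ because $|e| = n$). So $\exS^{n}(e) \ne \emptyset$ iff there exists $x \in X$ with $\tilde{x}|_{n} = e$, which is exactly the condition $e \in K_{S} = \{p|_{m} : p \in \widetilde{X}, m \in \N\}$ — here one uses that membership in $K_S$ for a sequence of length $n$ forces the truncation index to be $n$ (or at least $\geq n$, which then truncates to the same thing). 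I would spell out this last bookkeeping point carefully since it is where an off-by-one or length subtlety could hide.

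For part (2), the argument runs in parallel. I would note that $\exbarS^{n}(e) = \bigcap_{k<n} \cl S_{k,e(k)}$ consists of those $x \in X$ with $\tilde{x}(k) \sse e(k)$ for all $k < n$, i.e., with $e$ and $\tilde{x}|_n$ compatible — equivalently, by \eqref{eq:cs} applied coordinatewise up to $n$, there is some $p \sqsupseteq \tilde{x}$ with $p|_{n} = e$. This is exactly the statement that $e \in \wh{K}_{S} = \{p|_{m} : p \in \upa{\widetilde{X}}, m \in \N\}$. Concretely: if $x \in \exbarS^{n}(e)$, then since $e(k) \in \2$ whenever $\tilde{x}(k) = \bot$ would be forced... actually more simply, define $p$ by $p(k) = e(k)$ for $k < n$ and $p(k) = \tilde{x}(k)$ for $k \geq n$; compatibility of $e$ with $\tilde{x}|_n$ ensures $p \sqsupseteq \tilde{x}$, and $p|_n = e$, so $e \in \wh{K}_S$. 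Conversely, if $e = p|_n$ for some $p \sqsupseteq \tilde{x}$ with $x \in X$, then $\tilde{x}|_n \sse p|_n = e$, so $x \in \exbarS^n(e)$.

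The only real obstacle I anticipate is a purely notational one: keeping straight the relationship between the length $|e| = n$, the truncation operator $p|_n$, and the index $m$ appearing in the definitions of $K_S$ and $\wh{K}_S$ — in particular verifying that when $e \in K_S$ is written as $p|_m$, one may take $m = n = |e|$ (and similarly for $\wh{K}_S$), so that the ``witness'' $x$ produced really does satisfy $\tilde{x}|_n = e$ rather than merely $\tilde{x}|_m = e$ for some larger $m$. Since $e|_n = e$ and $(p|_m)|_n = p|_{\min(m,n)}$, and $|e| = n$ forces any representation $e = p|_m$ to have $p|_{\min(m,n)} = e$ as well, this works out, but I would state it explicitly rather than leave it implicit. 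Everything else is a direct translation through \eqref{eq:s} and \eqref{eq:cs}, and Lemma~\ref{lemmale} is not even needed here since we are at a fixed finite length.
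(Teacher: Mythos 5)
Your argument is correct and is essentially identical to the paper's own proof, which consists of exactly the two equivalences you establish: $x \in \exS^{n}(e)$ iff $e = \tilde{x}|_{n}$, and $x \in \exbarS^{n}(e)$ iff $\tilde{x}|_{n} \sqsubseteq e$ iff $e = q|_{n}$ for some $q \sqsupseteq \tilde{x}$ (the length bookkeeping you flag is handled exactly as you describe, since $|e|=n$ forces $e=e|_{n}$ and any representation $e=p|_{m}$ to have $m\geq n$). The only quibble is the gloss ``i.e., with $e$ and $\tilde{x}|_{n}$ compatible'' in part (2): the condition you actually derive and use, namely $\tilde{x}(k) \sqsubseteq e(k)$ for all $k<n$, is strictly stronger than compatibility (e.g.\ $\tilde{x}(k)=0$ and $e(k)=\bot$ are compatible but violate it, because $\cl S_{k,\bot}=S_{k,\bot}$ excludes such $x$), and it is this stronger condition, not mere compatibility, that makes your constructed $p$ a genuine upper bound of $\tilde{x}$.
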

\proof\hfill
\begin{enumerate}
\item $x \in \exS^{n}(e)$ if and only if $e = \tilde x|_{n}$.

\item $x \in \exbarS^{n}(e)$ if and only if $\tilde x|_{n} \sqsubseteq e$ if and only if
there exists $q \sqsupseteq \tilde x$ such that $e = q|_{n}$.
\qed
\end{enumerate}

\begin{thm}\label{t-main}
Suppose that $S$ is a proper dyadic subbase of a regular space $X$
 and $D \in \{D_{S}, \wh{D}_S\}$.
If $p \in L(D)$ and $p$ is compatible with $\tilde{x}$ in $\T^{\omega}$, then $p \spe \tilde{x}$.
In particular, 
$\widetilde{X}  \subseteq \min(L(D)).$
\end{thm}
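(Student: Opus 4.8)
The plan is to show the contrapositive-flavoured statement directly: assuming $p \in L(D)$ and $p \comp \tilde{x}$ in $\T^{\omega}$, I will prove $\tilde{x} \sse p$, and then deduce $\widetilde{X} \subseteq \min(L(D))$ as an easy corollary (if $q \sse \tilde{x}$ with $q \in L(D)$, then $q \comp \tilde{x}$, so by the main claim $q \spe \tilde{x}$, hence $q = \tilde{x}$; and $\tilde{x} \in L(D)$ since $\widetilde{X} \subseteq L(\T^{\omega})$ by Proposition of Section 2, and $\widetilde{X} \subseteq D_S \subseteq D$). So the whole content is the first assertion.

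For the main assertion, suppose towards a contradiction that $\tilde{x} \not\sse p$. Since $p \comp \tilde{x}$, this means there is some coordinate $n$ with $\tilde{x}(n) \ne \bot$ but $p(n) = \bot$; I will pick the least such $n$. The strategy is to use regularity of $X$ together with properness of $S$ to show that $p$ must in fact be a \emph{compact} element of $D$, contradicting $p \in L(D)$. The key observation is this: since $p \comp \tilde x$, for every $m$ the finite bottomed sequence $p|_{m} \lub \tilde{x}|_{m}$ lies above $\tilde{x}|_{m}$, so it belongs to $\wh{K}_{S}$ by Lemma \ref{lemmale}, and moreover (since $x \in S(\tilde x|_m)$ and $p|_m$ is compatible with $\tilde x$) the set $\exbarS^{m}(p|_m \lub \tilde x|_m)$ contains $x$, hence is nonempty. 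Using regularity of $X$, choose $m > n$ with $\cl S(\tilde{x}|_{m}) \subseteq S(\tilde{x}|_{n})$; I then want to argue that no digit of $p$ can occur at a position $\ge m$ that is not already forced, i.e. that $p$ has only finitely many digits beyond what is determined by $x$. Concretely, if $p$ had a digit at some position $j \ge m$ with $\tilde x(j) = \bot$, then $p|_{j+1} \lub \tilde x|_{j+1}$ would be an element of $\wh K_S$ strictly above $\tilde x|_{j}$ which is still compatible with $\tilde x$, forcing $x$ into a subbasis element $S_{j,p(j)}$ or its closure $\cl S_{j,p(j)} = \bar S_{j,p(j)}$; but $\tilde x(j) = \bot$ means $x \in \bd S_{j,a}$ for both $a$, so $x \in \bar S_{j,0} \cap \bar S_{j,1}$, which is fine — so the real contradiction must come from using that $x$ is the \emph{unique} point of $\bar S(p)$ (Proposition \ref{p:proper}(3)) together with $D$ being the ideal completion of $K_S$ or $\wh K_S$. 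So the cleaner route: since $p \comp \tilde x$, Proposition \ref{p:proper}(3) (applied with the element $p \lub \tilde x$, or directly noting $\bar S(p) \supseteq \bar S(p \lub \tilde x) = \{x\}$ and $\bar S(p) \subseteq \bar S(\tilde x) = \{x\}$ when $p \spe \tilde x$ — but here we only have compatibility) gives $\bar S(p) = \{x\}$ provided $p \spe \tilde x$; when $p \not\spe \tilde x$ we instead show $p$ is compact.

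The heart of the argument, then, is: assuming $\tilde x(n)\ne\bot=p(n)$ with $n$ least, I show $p$ is a compact element of $D$, i.e. $p \in K_S$ (resp. $\wh K_S$), contradicting $p \in L(D)$. To do this I exhibit an $m$ such that $p = p|_m$, using regularity: pick $m > n$ with $x \in S(\tilde x|_m) \subseteq \cl S(\tilde x|_m) \subseteq S(\tilde x|_n)$. For any $j \ge m$, I claim $p(j) = \bot$. Indeed, were $p(j) = a \in \2$, then since $p \comp \tilde x$ every point $y$ with $\tilde y \spe p|_{j+1}$ (these exist because $p|_{j+1} \in \wh K_S$, nonempty by Proposition \ref{p:density}(2) / \ref{p:proper}(1)) satisfies $y \in \bar S(p|_{j+1}) \subseteq \bar S(\tilde x|_m) = \cl S(\tilde x|_m) \subseteq S(\tilde x|_n)$, so $\tilde y|_n = \tilde x|_n$; but also $\tilde y(j) \sqsubseteq a$. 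Now I use that $p|_{j+1} \spe \tilde x|_m$ together with $\tilde x|_m \spe \tilde x|_n$ and the fact that $\tilde x|_m$ already has its last digit before position $m$, plus the minimality of $n$ (so $\tilde x|_n = p|_n$), to reach a contradiction with $p(n) = \bot \ne \tilde x(n)$: $\tilde y|_m \sqsubseteq p|_{j+1}|_m$ is incompatible-free, yet $\tilde y(n)$ would have to equal $\tilde x(n) \ne \bot$ while $p(n) = \bot$, and since $n < m \le j+1$ and $\tilde y \spe p|_{j+1}$ forces $\tilde y(n) \sqsubseteq p(n) = \bot$, so $\tilde y(n) = \bot \ne \tilde x(n)$, contradicting $\tilde y|_n = \tilde x|_n$. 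Hence $p(j)=\bot$ for all $j \ge m$, so $p = p|_m \in \wh K_S$ (resp. $K_S$, using Lemma \ref{lemmale}), contradicting $p \in L(D)$.

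I expect the main obstacle to be the bookkeeping around the two cases $D = D_S$ versus $D = \wh D_S$ — in particular making sure that the witness points $y$ I produce (to run the emptiness/nonemptiness arguments of Proposition \ref{p:density} and Lemma \ref{lemexs}) actually certify membership of the relevant truncations in $K_S$ rather than merely $\wh K_S$ when $D = D_S$; for the $D_S$ case one wants $y \in S(p|_{j+1})$ with $p|_{j+1} = \tilde y|_{j+1}$, which follows because the digit positions of $p|_{j+1}$ below $m$ coincide with those of $\tilde x$ (by minimality of $n$ and the choice of $m$), so the truncation behaves well. The rest is a routine but careful combination of properness ($\cl S(e) = \bar S(e)$), regularity (to shrink neighbourhoods), and the uniqueness statement Proposition \ref{p:proper}(3)–(4).
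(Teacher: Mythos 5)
Your high-level architecture is recognizably the paper's (isolate a coordinate $n$ with $\tilde{x}(n)\neq\bot=p(n)$, use regularity plus properness to trap things inside $\cl S(\tilde{x}|_{m})=\bar{S}(\tilde{x}|_{m})$, and convert an emptiness statement into non-membership of a truncation of $p$ in $K_S$ or $\wh{K}_S$ via Lemmas \ref{lemexs} and \ref{lemmale}), but the central step is broken in three places. First, the inclusion $\bar{S}(p|_{j+1})\subseteq\bar{S}(\tilde{x}|_{m})$ is false: it would require $p|_{j+1}\spe\tilde{x}|_{m}$, and this fails at exactly the coordinate $n<m$ where $p(n)=\bot\neq\tilde{x}(n)$ (you later assert $p|_{j+1}\spe\tilde{x}|_{m}$ outright, which is precisely what cannot hold). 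Second, the constraint you extract from your witness is backwards: a point $y$ with $\tilde{y}\spe p|_{j+1}$ satisfies $\bot=p(n)\sse\tilde{y}(n)$, which is no constraint at all, so you cannot conclude $\tilde{y}(n)=\bot$. To force $\tilde{y}(n)=\bot$ you need a witness of the truncation condition $\tilde{y}|_{j+1}\sse p|_{j+1}$, i.e.\ a point of $\exbarS^{j+1}(p)$ --- the extra factors $S_{k,\bot}$ for $k\notin\dom(p)$ are the entire content of Lemma \ref{lemexs}, and a point of $S(p|_{j+1})$ does not carry them. Third, your target $S(\tilde{x}|_{n})$ involves no constraint at coordinate $n$, so landing in it cannot conflict with $\tilde{y}(n)=\bot$; regularity must be used to get $m>n$ with $\bar{S}(\tilde{x}|_{m})\subseteq S_{n,\tilde{x}(n)}$. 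Finally, the claim that the digit positions of $p|_{j+1}$ below $m$ coincide with those of $\tilde{x}$ ``by minimality of $n$'' is false: minimality of $n$ controls only positions where $\tilde{x}$ has a digit and $p$ does not, and $p$ may have digits where $\tilde{x}$ has $\bot$.

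The repair is the paper's argument, which is also shorter than your detour through showing $p$ compact. Assuming $\tilde{x}(n)=0$, choose $m>n$ with $\bar{S}(\tilde{x}|_{m})\subseteq S_{n,0}$ \emph{and} $p(m-1)\neq\bot$ (possible since $p\in L(D)$ has infinitely many digits and enlarging $m$ only shrinks $\bar{S}(\tilde{x}|_{m})$). Any $y\in\exbarS^{m}(p)$ has $\tilde{y}|_{m}\sse p|_{m}$, hence $\tilde{y}(n)=\bot$, i.e.\ $y\in S_{n,\bot}$; on the other hand $p\comp\tilde{x}$ forces $\tilde{y}(k)\sse\tilde{x}(k)$ for every $k\in\dom(\tilde{x}|_{m})$, so $y\in\bar{S}(\tilde{x}|_{m})\subseteq S_{n,0}$, a contradiction. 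Hence $\exbarS^{m}(p)=\exS^{m}(p)=\emptyset$, so by Lemma \ref{lemexs} (applicable because the length of $p|_{m}$ is $m$) we get $p|_{m}\notin\wh{K}_S$ and $p|_{m}\notin K_S$, and by Lemma \ref{lemmale} $p\notin D$ at all --- no case distinction between compact and limit elements is needed. Your reduction of the ``in particular'' clause to the main assertion is fine.
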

\proof
Suppose that $p \in L(\T^{\omega})$ satisfies $p \comp \tilde{x}$ and $p \not \spe \tilde{x}$.  There is an index $\nn$ such that $\tilde{x}(n) \neq \bot$ and $p(n) = \bot$.  We
assume that $\tilde{x}(n) = 0$.  That is, $x \in S_{n,0}$.  
Since $X$ is regular and $S$ is proper, $x \in S(e) \subseteq \cl S(e) = \bar{S}(e)
 \subseteq S_{n,0}$
for some $e \in K(D_{S})$.
We can assume that $e = \tilde{x}|_{m}$ for some $m > n$
such that $p(m-1) \ne \bot$.

We have
\begin{align*}
\bar{S}(e) &= \bigcap_{k \in \dom(e)} ({S_{k,e(k)} \cup  S_{k,\bot}}),  \\
\exbarS^m(p) &= 
\bigcap_{\scriptsize\begin{array}{c}k < m,\\ k \in \dom(p)\end{array}}
({S_{k,p(k)} \cup  S_{k,\bot}}) \cap
\bigcap_{\scriptsize\begin{array}{c}k < m,\\ k \not \in \dom(p)\end{array}}
S_{k,\bot}.\\
\end{align*}
Therefore, since $p \comp e$,
we have $S_{n,0} \supseteq \bar{S}(e) \supseteq \exbarS^{m}(p) \supseteq \exS^{m}(p)$. 
On the other hand, since $p(n) = \bot$,  we have $S_{n,0} \cap \exbarS^{m}(p) = \emptyset$.  Therefore,
we can conclude that both $\exbarS^m(p) = \exbarS^m(p|_{m})$
and $\exS^m(p) = \exS^m(p|_{m})$  are empty.
Thus, by Lemma \ref{lemexs},  we have
$p|_{m} \not \in K_{S}$ and $p|_{m} \not \in \wh{K}_{S}$.
Then, from Lemma \ref{lemmale}, we have $p\not  \in D_{S}$ and $p \not \in \wh{D}_{S}$.
\qed

\begin{thm}\label{propact}
Suppose that $S$ is a proper dyadic subbase of a
compact Hausdorff space $X$ 
 and $D \in \{D_{S}, \wh{D}_S\}$.
We have $\widetilde{X}  = \min(L(D))$ and $X$ is a retract of $L(D)$.
\end{thm}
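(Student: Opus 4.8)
Since every compact Hausdorff space is regular, Theorem~\ref{t-main} already gives $\widetilde{X}\subseteq\min(L(D))$, so the equality $\widetilde{X}=\min(L(D))$ will follow once I prove the reverse inclusion, and this will drop out of the sharper statement $L(D)\subseteq\upa{\widetilde{X}}$. The plan is therefore: (i) establish $L(D)\subseteq\upa{\widetilde{X}}$ using compactness of $X$; (ii) read off $\widetilde{X}=\min(L(D))$; (iii) assemble the retraction $L(D)\to X$ out of $\rho_{S}$ and $\varphi_{S}$.

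For (i), fix $p\in L(D)$. By Lemma~\ref{lemmale} we have $p|_{n}\in K(D)$ for every $n$, so the non-empty Scott-open set $\{q\in D:q\spe p|_{n}\}$ meets $\widetilde{X}$, since $\widetilde{X}$ is dense in $D$ by Proposition~\ref{p:density}; hence $S(p|_{n})\neq\emptyset$, and as $S$ is proper, $\bar{S}(p|_{n})=\cl S(p|_{n})$. Thus $\{\bar{S}(p|_{n}):n\in\N\}$ is a decreasing family of non-empty closed subsets of $X$, and compactness of $X$ produces a point of $\bigcap_{n}\bar{S}(p|_{n})$. Since $\bigcap_{n}\bar{S}(p|_{n})=\bigcap_{k\in\dom(p)}\cl S_{k,p(k)}=\bar{S}(p)$, we obtain $x\in\bar{S}(p)$, that is $p\comp\tilde{x}$ by \eqref{eq:cs}; Theorem~\ref{t-main} then upgrades this to $p\spe\tilde{x}$, proving $L(D)\subseteq\upa{\widetilde{X}}$. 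For (ii): if $p\in\min(L(D))$ then $\tilde{x}\sse p$ for the $x$ just produced, and $\tilde{x}\in L(D)$, since $\dom(\tilde{x})$ is infinite (as $\widetilde{X}\subseteq L(\T^{\omega})$) whereas $K(D)\subseteq\T^{*}$; minimality of $p$ then forces $p=\tilde{x}$. Combined with Theorem~\ref{t-main}, this gives $\widetilde{X}=\min(L(D))$ (so in particular $\min(L(D))$ is compact, in accordance with Theorem~\ref{theorem:min}).

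For (iii), step (i) shows $\rho_{S}$ is defined on all of $L(D)$, with $\rho_{S}(p)$ the unique point of $\bar{S}(p)$, $\rho_{S}(\tilde{x})=x$, and $\rho_{S}$ surjective onto $X$. Because $X$ is regular, Proposition~\ref{p:1234} makes $\rho_{S}\colon\upa{\widetilde{X}}\to X$ continuous for the topology that $\upa{\widetilde{X}}$ inherits from $\T^{\omega}$; and the Scott topology of $D$ coincides with the subspace topology $D$ inherits from $\T^{\omega}$ (routine: the Scott-basic open sets $\{q\in D:q\spe p|_{n}\}$ are the traces on $D$ of basic open sets of $\T^{\omega}$, and by Lemma~\ref{lemmale} they form a neighbourhood base of $p$ for either topology). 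Hence the inclusion $L(D)\hookrightarrow\upa{\widetilde{X}}$ is continuous, so $\rho_{S}|_{L(D)}\colon L(D)\to X$ is continuous; together with the topological embedding $\varphi_{S}\colon X\to L(D)$ and the identity $\rho_{S}\circ\varphi_{S}=\mathrm{id}_{X}$, the composite $\varphi_{S}\circ\rho_{S}\colon L(D)\to\widetilde{X}$ is a retraction, so $X$ is a retract of $L(D)$.

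The main obstacle is step (i): it is precisely there that properness, regularity and compactness have to be played off against one another, through the finite-intersection property, the identification $\bigcap_{n}\bar{S}(p|_{n})=\bar{S}(p)$, and Theorem~\ref{t-main}. By comparison, the topological bookkeeping in (iii) — matching the Scott topology on $D$ with the subspace topology from $\T^{\omega}$ so that $\varphi_{S}$ and $\rho_{S}$ transport faithfully between $X$ and $L(D)$ — is routine.
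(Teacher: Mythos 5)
Your proof is correct, and it reaches the conclusion by a route that is recognizably dual to, but organized differently from, the paper's. The paper proves only $\min(L(D))\subseteq\widetilde{X}$ and does so by contradiction: given $p\in\min(L(D))\setminus\widetilde{X}$, Theorem~\ref{t-main} plus minimality forces $p$ to be incompatible with every $\tilde{x}$, which yields an open cover $X=\bigcup_{k\in\dom(p)}S_{k,1-p(k)}$ admitting no finite subcover (because $\bar{S}(p|_m)\neq\emptyset$ for all $m$), contradicting compactness; the retraction claim is then left implicit, resting on $L(D)=\upa{\widetilde{X}}$, which in the paper's architecture comes from Theorem~\ref{theorem:min} (enough minimal elements via finite-branching) combined with $\min(L(D))=\widetilde{X}$. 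You instead use compactness in its finite-intersection-property form to show directly that $\bar{S}(p)=\bigcap_n\bar{S}(p|_n)\neq\emptyset$ for every $p\in L(D)$, whence $p\spe\tilde{x}$ by Theorem~\ref{t-main}; this gives the stronger statement $L(D)\subseteq\upa{\widetilde{X}}$ in one stroke, from which both the equality $\widetilde{X}=\min(L(D))$ and the retraction fall out, without any appeal to Theorem~\ref{theorem:min} or to strong nonadhesiveness. Your version therefore also proves Corollary~\ref{maincor}(2) for compact $X$ as a byproduct, and your explicit verification that the Scott topology of $D$ agrees with the topology $D$ inherits from $\T^{\omega}$ (via Lemma~\ref{lemmale}) fills in a step the paper glosses over when it treats $\rho_S|_{L(D)}$ as continuous. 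Both arguments pivot on the same key lemma, Theorem~\ref{t-main}, so the difference is one of packaging rather than of substance, but your packaging is the more self-contained of the two.
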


\begin{proof}
Since a compact Hausdorff space is regular, 
we have $\widetilde{X}  \subseteq \min(L(D))$ by Theorem \ref{t-main}.
Assume that there exists $p\in \min(L(D)) \setminus \widetilde{X}$.
For every $x \in X$, $\tilde{x}$ and $p$ are not compatible in $\T^{\omega}$ by Theorem \ref{t-main}.
Therefore, $\tilde{x}(k)$ and $p(k)$ are different digits for some $k$.
Thus, we have an open covering 
$X=\bigcup_{k\in \dom(p)} S_{k,1-p(k)}.$
Since $\bar{S}(p|_m)\neq \emptyset$ for all $m \in \N$,
there is no finite subcovering. Therefore, $X$ is not compact.
\end{proof}

We state properties of domain representations as a corollary.
Here, $\upa{\widetilde{X}}$ in $(D_{S}, \upa{\widetilde{X}},
\rho_{S})$ is the upwards-closure of $\widetilde{X}$ in $D_{S}$ which
may be different from the upwards-closure of $\widetilde{X}$ in
$\T^{\omega}$.

\begin{cor}\label{maincor}
Suppose that $S$ is a proper dyadic subbase of a regular space $X$
and $D \in \{D_{S}, \wh{D}_S\}$.
 \begin{enumerate}
\item In the dense domain representation $(D, \tilde X, \varphi^{-1})$, we have $\tilde X \subseteq \min(L(D))$.  
In particular, if $X$ is
compact,  then $\tilde X = \min(L(D))$.  

\item In the retract domain representation $(D, \upa{\widetilde{X}}, \rho_{S})$, $\upa{\widetilde X}$ is downwards-closed in $L(D)$.
In particular, if $X$ is
compact,  then $\upa{\widetilde X} = L(D)$.   
\end{enumerate}
\end{cor}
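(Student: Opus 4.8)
The plan is to deduce Corollary~\ref{maincor} from the theorems already proved, treating each of the two items separately and using $D \in \{D_{S},\wh{D}_{S}\}$ uniformly. For item~(1), density of $\tilde X$ in $D$ is exactly Proposition~\ref{p:density}, and $(D,\tilde X,\varphi^{-1})$ is a domain representation because $\varphi_{S}^{-1}$ is a homeomorphism onto its image $\tilde X \subseteq D$ (the embedding property of $\varphi_{S}$ restricts to the subdomain, and $\tilde X \subseteq L(D)$ follows from $\widetilde{X}\subseteq L(\T^{\omega})$ together with $\tilde X \subseteq D$). The inclusion $\tilde X \subseteq \min(L(D))$ is then the content of Theorem~\ref{t-main}, and the equality $\tilde X = \min(L(D))$ in the compact case is Theorem~\ref{propact}. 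So item~(1) is essentially a repackaging; the only thing to check is that nothing in these earlier results required $D$ to be bounded complete — it does not, since both $D_{S}$ and $\wh{D}_{S}$ are handled together in Theorems~\ref{t-main} and~\ref{propact}.

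For item~(2), I would first recall from Corollary~\ref{cor-adm} and Proposition~\ref{p:admDS} that, since $X$ is regular and $S$ is proper, $\rho_{S}$ is a retraction from $\upa{\widetilde X}$ onto $X$ (with right inverse $\varphi_{S}$), so $(D,\upa{\widetilde X},\rho_{S})$ is indeed a retract domain representation, where $\upa{\widetilde X}$ denotes the upwards-closure computed inside $D$. The new claim is that $\upa{\widetilde X}$ is downwards-closed in $L(D)$: take $q \in L(D)$ with $q \sse p$ for some $p \in \upa{\widetilde X}$, so $p \spe \tilde x$ for some $x \in X$; I must produce $y \in X$ with $q \spe \tilde y$. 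The idea is that $q$, being a limit element of $D$, is compatible with some $\tilde y$: indeed $q$ is the lub of an ideal of compacts $q|_{n} \in K(D)$, and by Proposition~\ref{p:density} each $S(q|_{n})$ (or $\bar S(q|_{n})$) is nonempty, so the decreasing family $\{\bar S(q|_{n})\}$ has the finite intersection property; one then wants a point in the intersection. This is where compactness-type reasoning enters, but here $X$ need only be regular, so I cannot expect the intersection to be nonempty in general — instead I should argue that $q \in L(D)$ forces $q$ compatible with some $\tilde y$ via the structure of $D$, and then invoke Theorem~\ref{t-main} to upgrade compatibility to $q \spe \tilde y$, giving $q \in \upa{\widetilde X}$.

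The delicate point — and I expect this to be the main obstacle — is precisely showing that every $q \in L(D)$ is compatible with some $\tilde y$. For $D = \wh{D}_{S}$ this is almost immediate from the definition of $\wh K_{S}$: the compacts below $q$ are initial segments $p'|_{m}$ of sequences $p' \spe \tilde y$, and one should be able to extract a single $y$ working through finitely much of $q$ at a time, using that $\bar S(q|_{m}) \ne \emptyset$ and that $X$ being regular (hence the base $\BB_{S}$ behaves well) pins down limit behaviour; alternatively one reduces to a compact subspace $\overline{S(e)}$ for suitable $e$ via Lemma~\ref{l-4} and applies Theorem~\ref{propact} locally. For $D = D_{S}$ the argument is similar but one must be careful that $D_{S}$ is not bounded complete (Example~\ref{e:2}), so I would phrase the extraction purely in terms of ideals of $K_{S}$ and Lemma~\ref{lemmale}. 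Once downward-closedness of $\upa{\widetilde X}$ in $L(D)$ is in hand, the compact case $\upa{\widetilde X} = L(D)$ follows: by Theorem~\ref{propact} every $p \in L(D)$ lies above some minimal-limit element $\tilde x$, so $p \in \upa{\widetilde X}$, and conversely $\upa{\widetilde X} \subseteq L(D)$ since $D_{S}$ and $\wh{D}_{S}$ sit inside $\T^{\omega}$ with $\widetilde X \subseteq L(\T^{\omega})$ and an element above a limit element of $\T^{\omega}$ that still lies in $D$ is itself a limit element of $D$.
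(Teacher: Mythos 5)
Your item (1) is fine: it is exactly the ``in particular'' clauses of Theorems \ref{t-main} and \ref{propact}, and your checks (density from Proposition \ref{p:density}, $\widetilde X\subseteq L(D)$, no bounded completeness needed) are correct. The problem is item (2). You set up the right goal --- given $q\in L(D)$ with $q\sse p$ and $p\spe\tilde{x}$, produce $y$ with $q\spe\tilde{y}$ --- and then fail to notice that the witness is already in hand: $p$ is a common upper bound of $q$ and $\tilde{x}$ in $\T^{\omega}$, so $q\comp\tilde{x}$, and Theorem \ref{t-main} immediately gives $q\spe\tilde{x}$, hence $q\in\upa{\widetilde{X}}$. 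That single line is the entire content of the downward-closedness claim, which is why the paper's proof consists of citing Theorems \ref{t-main} and \ref{propact} and nothing more.

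Instead of using the hypothesis $q\sse p\spe\tilde{x}$, you drop it and try to prove that \emph{every} $q\in L(D)$ is compatible with some $\tilde{y}$, flag this as ``the main obstacle,'' and leave it unresolved. That is a genuine gap, and it cannot be closed, because the claim is false for non-compact regular $X$: in Example \ref{e5} the elements of $B$ belong to $\min(L(D_{H^{0}}))\setminus\widetilde{Z^{0}}$, and by Theorem \ref{t-main} together with their minimality they are compatible with no $\tilde{y}$. So the strategies you sketch (finite intersection property of the $\bar{S}(q|_{n})$, localizing to $\cl S(e)$ and applying Theorem \ref{propact} there) must fail in general. Your handling of the compact case is essentially right, with one attribution slip: the fact that every element of $L(D)$ lies above a minimal one comes from Theorem \ref{theorem:min} (regular $\Rightarrow$ Urysohn $\Rightarrow$ strongly nonadhesive, Proposition \ref{prop:urysohn}), not from Theorem \ref{propact}; combining it with $\min(L(D))=\widetilde{X}$ from Theorem \ref{propact} and the downward-closedness above yields $\upa{\widetilde{X}}=L(D)$, and your argument that $\upa{\widetilde{X}}\subseteq L(D)$ is correct.
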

\begin{proof}
By Theorem \ref{t-main} and \ref{propact}.
\end{proof}

As Corollary \ref{maincor} shows, if $X$ is compact, then
$(D_{S}, L(D_{S}), \rho_{S})$ and $(\wh D_{S}, L(\wh D_{S}), \rho_{S})$ are
representations of $X$ as minimal-limit sets of domains 
studied in \cite{Tsuiki:2004}. 
In both of the domains, all the strictly increasing sequences in the set of compact elements  denote points of $X$ via $\rho_{S}$.

As we have seen, if $S$ is a proper dyadic subbase of a regular space $X$, 
then $\min(L(D_{S}))$ is a compact space in which $X$ is embedded densely.
Therefore, $\min(L(D_{S}))$ is a kind of compactification of $X$.
However, it is not a Hausdorff compactification, in general, as Example \ref{e5} shows.

\begin{exa}\label{e5}
Let $Z = \I \times \I$ be a unit square. 
An independent subbase $H$ of $Z$ is defined as
\[
H_{2k, a} = G_{k,a} \times \I,\quad 
H_{2k+1, a} = \I \times G_{k,a},\quad \text{ for }k \in \N,\, \a2.
\]
We have $\varphi_{H}((1/2, 1/2)) = \bot \bot 1 1 0^{\omega}$.
We set $A = \{0 0 p, 0 1 p, 1 0 p, 1 1 p\}$ and  
$B = \{0 \bot p, 1 \bot p, \bot 0 p, \linebreak \bot 1 p\}$
where $p=110^{\omega}\in \T^{\omega}$.
Note that
$\upa{\bot \bot p} = \{\bot \bot p\} \cup A \cup B$.

Let $Z^{0} = \I \times \I \setminus \{(1/2, 1/2)\}$ be a subspace of $Z$.
The independent subbase of $Z^{0}$ which is obtained 
by restricting each element of $H$ to $Z^{0}$
is denoted by $H^0$.
We have $L(D_{H^{0}}) = L(D_{H}) \setminus \{\bot \bot p\}$
and we get
\[
\min(L(D_{H^{0}})) = (\min(L(D_{H})) \setminus \{\bot \bot p\}) \cup B.
\]
Since the set $B$ contains a pair of compatible bottomed sequences,
$\min(L(D_{H^0}))$ is not Hausdorff.
\end{exa}

\begin{exa}\label{e7}
Let $Z^{1} = Z\backslash\{(1/2,1/3)\}$
be a subspace of $Z$ and $H^1$ be a dyadic subbase of $Z^1$
defined similarly to Example \ref{e5}.
We have
\[
\widetilde{Z}^1=\min(L(D_H))\setminus\{q\}
\]
where $q=\bot 011(01)^{\omega}$.
However, since we have
\[
\left|y-\frac{1}{3}\right|< \frac{1}{3\cdot 2^n}\Rightarrow
\varphi_{H^1}((1/2,y))|_{2n}=q|_{2n}
\]
for all $\nn$,
we get $L(D_{H^1})=L(D_H)$.
Therefore $\min(L(D_{H^1}))$ is a Hausdorff compactification of $Z^1$.
\end{exa}

\begin{exa}\label{e8}
We set $Z^{2} = Z^1\cup\{x_0,x_1\}$
with $\tilde{x}_a=q[0:=a]$ for $q$ in Example \ref{e7} and $\a2$, and
let $H^{2}$ be the corresponding dyadic subbase.
The space $Z^2$ is 
a non-regular Hausdorff space
and we have 
$\min(L(D_{H^2})) = \min(L(D_H))$.
Since we have
$\tilde{x}_a \not\in \min(L(D_{H^2}))$,
we get $\widetilde{X}\not\subseteq\min(L(D_{H^2}))$.
\end{exa}

\section{Height of \texorpdfstring{$L(D_{S})$}{L(DsubS)} and the dimension of \texorpdfstring{$X$}{X}}

We finally study relations between the degree of a proper dyadic subbase $S$
and structures of $L(D_{S})$ and $L(\wh{D}_{S})$.

\begin{defi}
For a dyadic subbase $S$ of a space $X$ and $x \in X$, we define
$\deg_{S}(x) = |\{\nn :  x \in S_{n,\bot}\}|$ and
$\deg S = \sup \{\deg_{S}(x) : x \in X \}$.
\end{defi}
\noindent
If $\deg{S} = m$, then $\varphi_{S}(x)$ contains at most $m$ copies of
$\bot$ for $x \in X$. 
It is proved in \cite{OTY2} that 
 every  separable metrizable space $X$ with $\dim X = m$ 
has a proper dyadic subbase $S$ with $\deg S = m$.  
Here, $\dim  X$ is the covering dimension of $X$.  It is known that 
$\dim X$ is equal to the small inductive dimension $\ind X$ of $X$ for a separable metrizable space $X$.
See, for example,  \cite{engelking1977general} and \cite{engelking1995theory} for dimension theory.  

For a domain $D$, we consider
the small inductive dimension $\ind L(D)$ of the topological space $L(D)$ 
with the subspace topology of the Scott topology of $D$.
In Theorem 6.11 of \cite{Tsuiki:2004},  it is proved that
if $D$ is a domain with property M,  then 
$\ind L(D) = \height L(D)$ holds.    Here, 
$\height P$ is the maximal length of a chain 
$ a_0 \sqsubsetneq a_1 \sqsubsetneq \ldots \sqsubsetneq a_n$ in a poset $P$.  
Property M is defined as follows.

\begin{defi}\label{def-enough}
(1) We say that a poset $P$ is {\em mub-complete} if for every finite subset 
$A \subseteq P$, the set of upper bounds of $A$ has enough minimal elements. 
That is, if $p$ is an upper bound of $A$, then there exists a
minimal upper bound $q$ of $A$ such that $q \sse p$.\\
(2) We say that a domain $D$ has {\em 
property M}
if $K(D)$ is mub-complete and each finite subset $A \subseteq K(D)$ has a 
finite set of minimal upper bounds.
\end{defi}
\noindent
Property M is equivalent to Lawson-compactness
for $\omega$-algebraic dcpo by the 2/3 SFP Theorem
\cite{plotkin81postgraduate}. Domains with property M are studied in \cite{Jung89}.

\begin{prop}\label{p:final}
Suppose that $S$ is a proper dyadic subbase of a regular space $X$.
\begin{enumerate}
\item The domains $D_{S}$ and $\wh{D}_{S}$ have property M.
\item 
$\ind L(\wh{D}_{S}) = \height L(\wh{D}_{S}) \geq 
\ind L({D}_{S}) = \height L({D}_{S}) \geq \dim X$.
\item If $X$ is compact, then $\ind L(\wh{D}_{S}) =\deg S$.
\end{enumerate}
\end{prop}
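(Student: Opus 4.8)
The plan is to treat the three assertions in order, using throughout that $K(D_S)=K_S$ and $K(\wh{D}_S)=\wh{K}_S$ are the posets of compact elements (ideal completions), that a finite-domain element of either $D_S$ or $\wh{D}_S$ is compact and hence $L(D_S),L(\wh{D}_S)$ consist exactly of the infinite-domain elements (Lemma~\ref{lemmale}), and Theorem~6.11 of \cite{Tsuiki:2004} for the equalities between $\ind$ and $\height$. For (1): $\wh{D}_S$ is bounded complete by Proposition~\ref{p:lub}, and in a bounded complete domain the least upper bound of two compact elements, when it exists, is again compact; so each finite subset of $\wh{K}_S$ with an upper bound has a unique minimal upper bound, whence $\wh{D}_S$ has property~M. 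For $D_S$, fix a finite $A\subseteq K_S$ with an upper bound, set $N=\max\{|e|:e\in A\}$, and let $U_A$ be the finite set of upper bounds of $A$ in $K_S$ of length at most $N$. The key point is that any upper bound $d=\tilde{y}|_{m}$ of $A$ in $K_S$ satisfies $d\spe\tilde{y}|_{N}\in U_A$ (each $e\in A$ has $|e|\le N$ and $e\sse d$, forcing $e\sse\tilde{y}|_{N}$), and that an upper bound of $A$ in $K_S$ lying below an element of $U_A$ is itself in $U_A$; hence every minimal upper bound of $A$ lies in $U_A$, and every upper bound dominates a minimal one chosen inside $U_A$, so $K_S$ is mub-complete with finite mub-sets.

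For (2): the equalities $\ind L(D_S)=\height L(D_S)$ and $\ind L(\wh{D}_S)=\height L(\wh{D}_S)$ follow from (1) and Theorem~6.11 of \cite{Tsuiki:2004}. Since $D_S\subseteq\wh{D}_S$ and both limit sets are the respective infinite-domain elements, $L(D_S)\subseteq L(\wh{D}_S)$, so $\height L(D_S)\le\height L(\wh{D}_S)$. Finally $\widetilde{X}\subseteq\min(L(D_S))$ by Theorem~\ref{t-main}, and $\varphi_S$ embeds $X$ homeomorphically onto $\widetilde{X}\subseteq L(D_S)$; a regular second-countable space is separable metrizable, so by monotonicity of $\ind$ on subspaces and the coincidence $\dim X=\ind X$ we get $\dim X=\ind X=\ind\widetilde{X}\le\ind L(D_S)=\height L(D_S)$.

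For (3): by (2) it suffices to prove $\height L(\wh{D}_S)=\deg S$; note $L(\wh{D}_S)$ is upwards-closed in $\wh{D}_S$ and contains $\widetilde{X}$. For $\le$: given a chain $a_0\sqsubsetneq\ldots\sqsubsetneq a_k$ in $L(\wh{D}_S)$, compactness of $X$ (via Corollary~\ref{maincor}(2)) gives $x\in X$ with $\tilde{x}\sse a_0\sse\ldots\sse a_k$; as $p\sqsubsetneq q$ in $\T^{\omega}$ forces $\dom(p)\subsetneq\dom(q)$, the sets $\dom(a_i)\cap\{k:\tilde{x}(k)=\bot\}$ strictly increase with $i$ inside a set of size $\deg_S(x)$, whence $k\le\deg_S(x)\le\deg S$. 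For $\ge$: fix $x\in X$, choose positions $k_1<\ldots<k_j$ with $\tilde{x}(k_l)=\bot$ (with $j\le\deg_S(x)$, arbitrary finite if $\deg_S(x)$ is infinite) and values $a_l\in\2$, and let $p_i$ be obtained from $\tilde{x}$ by replacing its $k_1,\ldots,k_i$-th coordinates with $a_1,\ldots,a_i$. Then $\tilde{x}=p_0\sqsubsetneq\ldots\sqsubsetneq p_j$ and each $p_i\in L(\wh{D}_S)$: indeed $x\in\exbarS^{n}(p_i)$ for all $n$, since at coordinates where $p_i$ agrees with $\tilde{x}$ this is immediate while at each $k_l$ one has $x\in S_{k_l,\bot}=\bd S_{k_l,0}\subseteq\cl S_{k_l,a_l}$, so Lemma~\ref{lemexs} gives $p_i|_{n}\in\wh{K}_S$ for all $n$, hence $p_i\in\wh{D}_S$, and $p_i\spe\tilde{x}\in L(\wh{D}_S)$ with upwards-closedness yields $p_i\in L(\wh{D}_S)$. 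Thus $\height L(\wh{D}_S)\ge\deg_S(x)$ for every $x$, so $\height L(\wh{D}_S)=\deg S$ and, by (2), $\ind L(\wh{D}_S)=\deg S$. The hardest part is this last lower bound: one must exhibit explicit chains of length $\deg_S(x)$ in $L(\wh{D}_S)$, the crucial observation being that filling the $\bot$-coordinates of $\tilde{x}$ one at a time keeps one inside $\wh{D}_S$ precisely because $x$ lies on the common boundary $S_{k_l,\bot}$ of $S_{k_l,0}$ and $S_{k_l,1}$, hence remains in every closed set $\cl S_{k_l,a_l}$; a secondary technicality is the length bound $N$ on minimal upper bounds in $K_S$ used for (1).
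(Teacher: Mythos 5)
Your proposal is correct and follows essentially the same route as the paper: property M for $\wh{D}_S$ via bounded completeness and for $D_S$ via a length bound on minimal upper bounds, part (2) via Theorem 6.11 of \cite{Tsuiki:2004} together with the embedding of $X$ in $L(D_S)$, and part (3) by matching $\height L(\wh{D}_S)$ with the maximal number of bottoms in some $\tilde{x}$ using $\upa{\widetilde{X}}=L(\wh{D}_S)$ under compactness. You merely spell out in detail (explicit chains obtained by filling $\bot$-coordinates, and the counting argument for the upper bound) what the paper compresses into one sentence.
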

\proof\hfill
\begin{enumerate}
\item Since bounded completeness implies property M,  $\wh{D}_{S}$
has property M.  For $D_{S}$, suppose that a finite subset $A \subseteq K_{S}$ has
an upper bound. Let $d$ be the least upper bound of $A$ in $\T^{*}$.
Then, $e \in K_{S}$ is an upper bound of $A$ in $K_{S}$ if and only if $e \sqsupseteq d$.
If $d \in K_S$, then it is the only minimal upper bound of $A$.
Suppose that  $d \not \in K_S$, $e \sqsupseteq d$, and $e \in K_{S}$.
Then, for $n = |d|$, $e|_{n} \sqsupseteq d$ and $e|_{n} \in K_{S}$ by Lemma \ref{lemmale}(1).   
 Therefore, if $e$ is a minimal upper bound of $A$, then  $e = e|_{n}$
 and the length of $e$ is no more than $n$.
Therefore, the set of minimal upper bounds of $A$ is finite.

\item The equation $\ind L(D) = \height L(D)$
for $D \in  \{D_{S}, \wh{D}_{S}\}$ 
is derived from (1) and Theorem 6.11 of \cite{Tsuiki:2004}.
We have $\ind L(D_{S}) \geq \ind X$ because
$X$ is embedded in $L(D_{S})$ and $\ind X = \dim X$ for a separable metrizable space $X$.

\item Since $\upa{\tilde{x}}$ in $\wh{D}_{S}$ and
$\upa{\tilde{x}}$ in $\T^{\omega}$ are the same set for $x \in X$,
the maximum number of bottoms in $\tilde x$ for $x \in X$ is equal to 
the height of $L(\wh{D}_{S})$,
which is equal to the small inductive dimension of $L(\wh{D}_{S})$ by (2).
\qed
\end{enumerate}

\noindent
Note that $\ind L(D_{S})$ may not be equal to $\deg{S}$
even for an independent subbase of a compact space $X$ as Example \ref{ex:height}
shows. 
In this example, 
the height of $L(D_{R})$  is one, whereas that of $L(\wh{D}_{R})$ is two.

\section*{Acknowledgement}

The authors thank anonymous referees for  careful readings and many valuable
comments.


\end{document}